\newtheorem{theorem}{Theorem}[section]
\newtheorem{lemma}[theorem]{Lemma}
\newtheorem{proposition}[theorem]{Proposition}
\theoremstyle{definition}
\newtheorem{definition}[theorem]{Definition}
\newtheorem{remark}[theorem]{Remark}
\newcommand{\R}{\mathbb{R}}
\newcommand{\N}{\mathbb{N}}
\newcommand{\C}{\mathbb{C}}
\newcommand{\T}{\mathbb{T}}
\newcommand{\Z}{\mathbb{Z}}
\begin{document}

\title[Mass-critical NLS on tori]{Improved global well-posedness for mass-critical nonlinear Schr\"odinger equations on tori}
\author{Robert Schippa}
\address{Korea Institute of Advanced Study, Hoegi-ro 85, Dongdaemun-gu, 02455 Seoul}
\email{rschippa@kias.re.kr}

\begin{abstract}
We show new global well-posedness results for mass-critical nonlinear Schr\"odinger equations on tori in one and two dimensions.
For the quintic nonlinear Schr\"odinger equation on the circle we show global well-\-posed\-ness for initial data in $H^s(\T)$ for $s>\frac{1}{3}$ and $\| u_0 \|_{L^2(\T)} \ll 1$. In two dimensions we show global well-\-po\-sed\-ness on possibly irrational tori for $s>\frac{3}{5}$. In the focusing case we need to consider small $L^2$-norm, whereas in the defocusing case large mass is covered.
\end{abstract}

\maketitle

\section{Introduction}

This note is concerned with the global well-posedness of the mass-critical nonlinear Schr\"odinger equation (NLS) on tori. The one-dimensional torus with period $2 \pi \beta$, $\beta> 1/2$ is denoted by $\T_{\beta} = \R / (2\pi \beta \Z)$. For $d=2$ we denote possibly irrational tori with $\underline{\gamma} \in (1/2,1]^{d-1}$ by $\T^d_{\underline{\gamma}} = \T \times \T_{\gamma_1} \times \ldots \times \T_{\gamma_{d-1}}$. The square torus is denoted by $\T^d$. To simplify notations, for $d=1$ let $\underline{\gamma} = 1$.

\smallskip

The Cauchy problems for the mass-critical NLS read
\begin{equation}
\label{eq:MassCriticalNLS}
\left\{ \begin{array}{cl}
i \partial_t u + \Delta u &= \pm |u|^{\frac{4}{d}} u, \quad (t,x) \in \R \times \T^d_{\underline{\gamma}}, \\
u(0) &= u_0 \in H^s(\T^d_{\underline{\gamma}}).
\end{array} \right.
\end{equation}

The energy is given by
\begin{equation*}
E(u)(t) = \int_{\T^d_{\gamma}} \frac{|\nabla_x u|^2}{2} \pm \frac{d}{4+2d} |u|^{2+ \frac{4}{d}} dx
\end{equation*}
with signs matching the nonlinearity. The equation with $+$-sign on the right hand-side is referred to as \textit{defocusing} due to coercivity of the energy, whereas the equation with $-$-sign is referred to as \textit{focusing}.

\medskip

The low regularity local well-posedness of nonlinear Schr\"odinger equations on tori was thoroughly investigated for the first time by Bourgain \cite{Bourgain1993A}, using discrete Fourier restriction and introducing Fourier restriction norm spaces.

\smallskip

By using the $L^{{2+ \frac{4}{d}}}_{t,x}$-Strichartz estimate
\begin{equation}
\label{eq:CriticalStrichartz}
\| e^{it \Delta} P_N f \|_{L^{{2+ \frac{4}{d}}}_{t,x}([0,1] \times \T^d)} \lesssim_\varepsilon N^\varepsilon \| f \|_{L^2(\T)}
\end{equation}
and multilinear refinements, which result from Galilean invariance, Picard iteration becomes possible in Fourier restriction norm spaces. In this way Bourgain \cite{Bourgain1993A} proved local well-posedness of \eqref{eq:MassCriticalNLS} for $d=1$ and $d=2$ on $\T^2$ for $s>0$.

 It is known since \cite{Bourgain1993A} that the derivative loss in \eqref{eq:CriticalStrichartz} cannot be removed for $d=1$, and Kishimoto \cite{Kishimoto2014} proved that \eqref{eq:MassCriticalNLS} fails to be locally well-posed in $L^2(\T^d)$ with $C^5$-data-to-solution mapping in $d=1$ (resp. $C^3$-data-to-solution mapping in $d=2$); see also \cite{TakaokaTzvetkov2001}. 

The first global result on $\T$ below $H^1(\T)$ was proved by Bourgain \cite{Bourgain2004} using normal form transformations. Using trilinear estimates on frequency dependent times, global well-posedness for $s>\frac{1}{2}-\varepsilon$ with some small $\varepsilon>0$ was established. The key ingredient is a trilinear refinement of the $L^6_{t,x}$-Strichartz estimate for separated frequencies on frequency-dependent time intervals. Whereas Bourgain's estimate was qualitative, the smoothing was recently quantified in \cite{Schippa2023}.

\smallskip

In \cite{DeSilvaPavlovicStaffilaniTzirakis2007} De Silva \emph{et al.} reported global well-posedness in the defocusing case in $H^s(\T)$ for $s$ below $\frac{1}{2}$ in a quantitative sense\footnote{An issue with the analysis of the resonant part was pointed out by Tzirakis \cite{Tzirakis2010}.}. The authors used the $I$-method (see   \cite{CollianderKeelStaffilaniTakaokaTao2002,CollianderKeelStaffilaniTakaokaTao2003}) to show energy decay. 
 With the method having become a canonical tool to show low regularity well-posedness of dispersive equations (see \cite[Chapter~3]{Tao2006}), we presently give only a very brief account for context:

\medskip

Let $0<s<1$. The $I$-method revolves around a smoothing operator $I$, which is smoothing of order $1-s$ for frequencies larger than $N$. For frequencies lower than $N$, $I$ acts like the identity mapping. To introduce more dispersion, we rescale to a large torus $\T^d_\lambda$. The effect becomes more favorable for higher regularities. The relation between $\lambda$ and $N$ is given by
\begin{equation*}
\lambda = N^{\frac{1-s}{s}},
\end{equation*}
which ensures that
\begin{equation*}
\| I f_{\lambda} \|_{H^1(\T^d_\lambda)} \lesssim \| f \|_{H^s(\T^d)}.
\end{equation*}

 Although the energy cannot be used as conserved quantity for $f_\lambda \in H^s(\T^d_\lambda)$, $s<1$, it will be finite for $If_\lambda$. We analyze the $I$-energy $E(I u_\lambda)$, were $u_\lambda$ denotes the solution to \eqref{eq:MassCriticalNLS} with $u_\lambda(0) = f_\lambda$, which serves as an almost conserved quantity. The slow $I$-energy growth allows us to show well-posedness of the rescaled system on a time interval $N^\beta \lambda^\alpha$. However, this existence time depends on the very large period $\lambda = \lambda(N)$. Scaling back to the unit torus, the time of existence is given by $N^\beta \lambda^{\alpha-2}$. Provided that this translates to $N^\kappa$ for some $\kappa > 0$, we can conclude the global well-posedness result by taking $N \to \infty$.

\medskip

The result in \cite{DeSilvaPavlovicStaffilaniTzirakis2007} was further improved by Li--Wu--Xu \cite{LiWuXu2011} who showed global well-posedness of the defocusing equation for $s>\frac{2}{5}$. They combined the $I$-method argument with resonant decompositions. In one dimension we use the same resonant decomposition and rescaling to large tori to increase dispersive effects. Using suitable linear and bilinear Strichartz estimates on time intervals much longer than the unit time scale, we can improve the result due to Li--Wu--Xu \cite{LiWuXu2011} in case of small data $\| u_0 \|_{L^2(\T)}$.  Note that the $L^2$-norm remains invariant under rescaling, and our arguments in one dimension currently do not allow for large data. It is currently not clear how to bring the trilinear refinements from \cite{Schippa2023} into play.

\medskip

Precisely, we show the following small mass global well-posedness result:
\begin{theorem}
\label{thm:ImprovedGWPQuinticNLS}
Let $d=1$. For $s>\frac{1}{3}$ \eqref{eq:MassCriticalNLS} is globally well-posed provided that $\| u_0 \|_{L^2(\T)} \ll 1$.
\end{theorem}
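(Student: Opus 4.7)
The plan is to combine the $I$-method strategy of Li--Wu--Xu with improved multilinear Strichartz estimates valid on time intervals much longer than unit length, the latter being available because the small $L^2$-hypothesis permits us to treat the nonlinearity perturbatively. First I rescale the initial datum to the large torus $\T_\lambda$ via $u_\lambda(t,x) = \lambda^{-1/2} u(t/\lambda^2, x/\lambda)$ with $\lambda = N^{(1-s)/s}$; this preserves both the quintic NLS and the $L^2$-norm, while $\|Iu_\lambda(0)\|_{H^1(\T_\lambda)} \lesssim \|u_0\|_{H^s(\T)}$, so the initial modified energy $E(Iu_\lambda)(0)$ is $O(1)$. The task reduces to showing $E(Iu_\lambda)$ stays bounded on $[0,T\lambda^2]$ for arbitrary $T>0$.

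Next I differentiate $E(Iu_\lambda)$ in time. Because $I$ fails to commute with the nonlinearity, the increment is a sextilinear expression in Fourier whose multiplier is controlled by $1 - m(k_1)\cdots m(k_5) \cdot m(k_1+\cdots-k_5)^{-1}$, with $m$ the $I$-symbol. I split frequencies according to the modulation $\Omega = k_1^2 + k_2^2 + k_3^2 - k_4^2 - k_5^2 - k_6^2$: in the nonresonant regime $|\Omega| \gtrsim M$ a normal-form transformation eliminates the leading contribution at the cost of a factor $M^{-1}$ and a residual sextilinear-times-quintic commutator; the resonant regime $|\Omega| \ll M$ together with the residual are then retained for direct estimation.

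To estimate both remainders I apply linear $L^6$ and bilinear refinements on the long interval $[0,T_\lambda]$ with $T_\lambda = \lambda^\sigma$, $\sigma \in (0,2)$. On $\T_\lambda$ these estimates gain powers of $\lambda^{-1}$ reflecting the passage to the Euclidean setting, and the hypothesis $\|u_0\|_{L^2} \ll 1$ supplies a small factor in each sextilinear piece, letting me close the contraction. The resulting per-step bound has the shape $|E(Iu_\lambda)(t_0+T_\lambda) - E(Iu_\lambda)(t_0)| \lesssim \|u_0\|_{L^2}^{\alpha} N^{-\beta(s)}$ for some $\beta(s)>0$. Iterating $T\lambda^2/T_\lambda$ times forces the requirement $(T\lambda^2/T_\lambda) N^{-\beta(s)} \ll 1$; after substituting $\lambda = N^{(1-s)/s}$ and optimizing in $\sigma$ and $M$, the surviving constraint reads $s > \tfrac{1}{3}$, from which the theorem follows after undoing the scaling.

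The main obstacle is the derivation of the bilinear Strichartz estimate on $[0,T_\lambda]$ with a sharp constant reflecting the non-square lattice spacing $\lambda^{-1}\Z$, since the standard $X^{s,b}$-framework is adapted to unit time intervals and the gain must be carefully tracked through the normal-form step. A secondary difficulty is that the quintic resonance set on $\T$ is genuinely nonempty beyond the trivial pairings $k_j = k_{\pi(j)}$, so the resonant contribution requires a separate sub-decomposition; it is here that the smallness of $\|u_0\|_{L^2}$ is essential, since the quintic resonant interaction cannot be removed by a gauge transformation as in the cubic case.
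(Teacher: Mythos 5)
Your skeleton coincides with the paper's: rescaling to $\T_\lambda$ with $\lambda=N^{\frac{1-s}{s}}$, the Li--Wu--Xu resonant decomposition (your ``normal-form transformation'' is exactly the correction term $\Lambda_6(\tilde\sigma_6)=-\Lambda_6(\tilde M_6/\alpha_6)$), linear $L^6$ and bilinear Strichartz estimates on a time interval that is long compared to unit scale (the paper takes $T\sim\lambda/N$, which is your $\lambda^\sigma$ with $\sigma=\frac{1-2s}{1-s}$), and iteration of an almost-conservation bound until the existence time on $\T$ becomes $N^{3-\frac1s-}$, positive precisely for $s>\frac13$. So the route is the same; the issues are in how you propose to make it quantitative.

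Two concrete gaps. First, you misplace the role of the hypothesis $\|u_0\|_{L^2}\ll1$. It does not ``supply a small factor in each sextilinear piece,'' and it is not what tames the resonant set: the energy-increment estimates \eqref{eq:EnergyEstimateResonant}--\eqref{eq:BoundaryTermEstimate} are proved without any smallness, by exploiting the structure of the resonant region (sign constraints and the localization $|k_1+k_2|\lesssim (N_3^*)^2/N_1^*$, etc.) together with bilinear Strichartz estimates. Smallness is needed at a different point: the local well-posedness on the long interval $[0,\lambda/N]$ (Proposition \ref{prop:AuxiliaryWellposedness1d}) cannot be closed by a small power of $T$, since $T$ is large; one must instead close the fixed point with $\|u_0\|_{H^s(\T_\lambda)}\ll1$, which the scale-invariant $L^2$-smallness guarantees after rescaling (and, in the focusing case, smallness is also needed for coercivity of the energy via Gagliardo--Nirenberg). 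An argument that relies on smallness inside the increment estimates would not explain why the defocusing large-mass case is still open in $d=1$. Second, the quantitative heart is asserted rather than derived: you posit a per-step bound $N^{-\beta(s)}$ and claim the optimization ``reads $s>\frac13$,'' but the whole content of the theorem is the verification that $\beta=3-$ is attainable on intervals of length $\lambda/N$. This requires (i) size \emph{and regularity} estimates for the localized multipliers $\overline M_6$, $M_6/\Omega_6$ so that a Fourier-series separation of variables reduces the multilinear forms to products in physical space where Strichartz estimates apply, and (ii) a case analysis of the resonant set fine enough that in each case two bilinear estimates (or one bilinear plus $L^6$'s) are available with the correct frequency separation and sign configuration. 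Without these, the constraint $s>\frac13$ does not follow from the outline.
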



\bigskip

Furthermore, we improve the global well-posedness in two dimensions on arbitrary tori $\T^2_\gamma = \T \times \R / (2 \pi \gamma \Z)$ for $\gamma \in (\frac{1}{2},1]$. In two dimensions the nonlinearity is cubic. On the square torus $\T^2$, Bourgain proved the $L^4_{t,x}$-Strichartz estimate in \cite{Bourgain1993A}
\begin{equation}
\label{eq:L4SquareTorus}
\| P_N e^{it \Delta} u_0 \|_{L^4_{t,x}(\T \times \T^2)} \lesssim \exp \big( \frac{c \log N}{\log \log N} \big) \| u_0 \|_{L^2(\T^2)}.
\end{equation}

Global well-posedness in the defocusing case was proved by De Silva \emph{et al.} \cite{DeSilvaPavlovicStaffilaniTzirakis2007} for $s>\frac{2}{3}$ using the $I$-method, which confirmed an assertion of Bourgain in \cite{Bourgain2004}. De Silva--Pavlović--Staffilani--Tzirakis \cite{DeSilvaPavlovicStaffilaniTzirakis2007} relied on the following bilinear Strichartz estimate on the square torus $\T^2_\lambda$ with large period $\lambda \geq 1$ and separated frequencies $N_2 \ll N_1$:
\begin{equation}
\label{eq:BilinearStrichartzEstimate}
\| e^{it \Delta} P_{N_1} u_1 e^{it \Delta} P_{N_2} u_2 \|_{L^2_{t,x}([0,1] \times \T^2_\lambda)} \lesssim \big( \frac{1}{\lambda} + \frac{N_2}{N_1} \big)^{\frac{1}{2}} \| u_1 \|_{L^2(\T^2_\lambda)} \| u_2 \|_{L^2(\T^2_\lambda)}.
\end{equation}

Using decoupling, Bourgain--Demeter \cite{BourgainDemeter2015} extended the $L^4_{t,x}$-Strichartz estimate \eqref{eq:L4SquareTorus} to arbitrary tori. Moreover, using decoupling arguments, Fan \emph{et al.} \cite{FanStaffilaniWangWilson2018} similarly extended the bilinear Strichartz estimate to arbitrary tori, by which the global well-posedness results for the square torus could be extended to the irrational torus. Trilinear Strichartz estimates in $L_{t,x}^4$ were recently proved in \cite{Schippa2023}.

Very recently, Herr--Kwak \cite{HerrKwak2023} proved the sharp $L^4_{t,x}$-Strichartz estimate
\begin{equation*}
\| P_S e^{it \Delta} u_0 \|_{L^4_{t,x}([0,(\log \# S)^{-1}] \times \T^2)} \lesssim \| u_0 \|_{L^2(\T^2)},
\end{equation*}
where $P_S$ denotes the Fourier projection to a finite set $S \subseteq \Z^2$ with $\# S \geq 2$. This implied global well-posedness on $\T^2$ for $s>0$ under the small mass constraint $\| u_0 \|_{L^2(\T^2)} \ll 1$.

\medskip

This raises two questions:
\begin{itemize}
\item Do the arguments extend to irrational tori?
\item When does global well-posedness hold as well in the case of large mass?
\end{itemize}
Herr--Kwak use a Szemerédi--Trotter argument to show the $L^4_{t,x}$-estimate and extension to irrational tori is currently unclear. Here instead we refine the $I$-method argument originally due to De Silva \emph{et al.} \cite{DeSilvaPavlovicStaffilaniTzirakis2007} and introduce a correction term for the $I$-energy by resonant decomposition. We use linear and bilinear Strichartz estimates, which hold regardless of the ratio of periods. We show the following:
\begin{theorem}
\label{thm:GWPNLS2d}
Let $d=2$. For $s > \frac{3}{5}$  \eqref{eq:MassCriticalNLS} is globally well-posed in the defocusing case. In the focusing case \eqref{eq:MassCriticalNLS} is globally well-posed provided that $\| u_0 \|_{L^2(\T_\gamma^2)} \ll 1$.
\end{theorem}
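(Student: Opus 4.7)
The plan is to refine the $I$-method of \cite{DeSilvaPavlovicStaffilaniTzirakis2007} in the spirit of Li--Wu--Xu \cite{LiWuXu2011} by adding a resonant-decomposition correction to the $I$-energy, and to run the entire analysis on the rescaled torus $\T^2_{\lambda\underline{\gamma}}$ so that the bilinear Strichartz estimate \eqref{eq:BilinearStrichartzEstimate}, which by \cite{FanStaffilaniWangWilson2018} holds on arbitrary tori and carries an explicit $\lambda^{-1/2}$-gain on widely separated frequencies, can be fully exploited. Concretely I use the scaling $u(t,x) \mapsto u_\lambda(t,x) = \lambda^{-1} u(\lambda^{-2}t,\lambda^{-1}x)$, which preserves the $L^2$-norm (and hence forces the smallness hypothesis in the focusing case), introduce the usual Fourier multiplier $I = I_N$ that is the identity below $N$ and smooths by $(|\xi|/N)^{s-1}$ above $N$, and couple $\lambda = N^{(1-s)/s}$ so that $\| Iu_{0,\lambda}\|_{H^1(\T^2_{\lambda\underline\gamma})} \lesssim \| u_0\|_{H^s(\T^2_{\underline\gamma})}$.

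First I establish local well-posedness for $Iu_\lambda$ on the unit time interval in an $X^{1,b}(\T^2_{\lambda\underline\gamma})$ space, using \eqref{eq:BilinearStrichartzEstimate} together with its $X^{0,b}$-upgrade, with constants uniform in $\underline\gamma$. Differentiating the $I$-energy along the flow produces a quartic form encoding the failure of $I$ to commute with the cubic nonlinearity; a direct $M_4$-symbol analysis gives the unit-time energy increment responsible for the threshold $s>2/3$ of \cite{DeSilvaPavlovicStaffilaniTzirakis2007}. To push below this I split the quartic contribution according to the resonance function $\Omega = |\xi_1|^2 - |\xi_2|^2 + |\xi_3|^2 - |\xi_4|^2$ and perform a normal-form transformation: on the nonresonant piece I integrate by parts in time, absorbing the oscillatory factor through division by $\Omega$ into a boundary correction $\Lambda(Iu_\lambda)$ satisfying $|\Lambda| \ll E(Iu_\lambda)$. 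The modified quantity $\widetilde{E}(Iu_\lambda) = E(Iu_\lambda) + \Lambda(Iu_\lambda)$ then obeys an evolution whose right-hand side retains only a genuinely resonant quartic contribution (where the symbol $M_4$ is automatically small) and a sextic contribution generated by substituting the equation into $\Lambda$; both are estimated by repeated use of \eqref{eq:BilinearStrichartzEstimate} and of the $\lambda^{-1/2}$-gain on separated frequencies, yielding a strictly sharper unit-time increment than the uncorrected scheme.

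Iterating the almost-conservation of $\widetilde E$ across the $\sim T\lambda^2$ unit steps needed to cover time $T$ on the original torus, and then demanding that the total increment remain $O(1)$, produces an admissible lifespan of the form $T \lesssim N^{\kappa(s)}$. A careful bookkeeping of the exponents appearing in the corrected scheme (the gain on the resonant set, the $\lambda^{-1/2}$-gains from the bilinear estimate, and the scaling $\lambda = N^{(1-s)/s}$) shows $\kappa(s) > 0$ exactly when $s > 3/5$; letting $N \to \infty$ then concludes the proof. In the defocusing case the energy is coercive so $\|\nabla Iu_\lambda\|_{L^2}$ is controlled by $\widetilde E$ with no mass restriction, while in the focusing case the hypothesis $\|u_0\|_{L^2(\T^2_{\underline\gamma})} \ll 1$ is required both to dominate the sign-indefinite quartic part of $\widetilde E$ via mass-critical Gagliardo--Nirenberg on $\T^2_{\lambda\underline\gamma}$ and to close the long-time iteration of the local theory.

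The main obstacle I foresee is the fine multilinear analysis of $\partial_t \widetilde E$: one must verify that the factor $1/\Omega$ in the correction symbol does not create uncontrolled singularities in the near-resonant region, that all $\lambda^{-1/2}$-gains extracted from \eqref{eq:BilinearStrichartzEstimate} genuinely beat the $\lambda^2$ coming from iteration, and that every constant remains uniform in $\underline\gamma$. The last requirement is attainable precisely because the only harmonic-analytic input is the bilinear Strichartz estimate (whose irrational-torus version is \cite{FanStaffilaniWangWilson2018}) rather than the sharp $L^4$-bound of Herr--Kwak \cite{HerrKwak2023}, whose extension to arbitrary $\underline\gamma$ is currently open.
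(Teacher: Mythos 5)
Your strategy coincides with the paper's: rescale to $\T^2_\lambda$ with $\lambda=N^{(1-s)/s}$, run the $I$-method with a correction term $\Lambda_4(\tilde{\sigma}_4)$ obtained by dividing by the resonance function off a suitable non-resonant set, estimate the surviving quartic and sextic increments using the bilinear Strichartz estimate valid on arbitrary tori, and iterate; the coercivity discussion (defocusing vs.\ small-mass focusing via Gagliardo--Nirenberg) and the final count $T\sim\lambda^{-3/2}N^{1-}$, positive precisely for $s>\frac{3}{5}$, are likewise as in the paper. The only cosmetic differences are that the paper works in $Y^1$-spaces on time steps of length $\lambda^{-\delta}$ rather than in $X^{1,b}$ on unit time steps.

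As a proof, however, the proposal stops exactly where the theorem is decided, and three ingredients are asserted rather than supplied. First, the non-resonant region must be specified concretely: the paper takes $\tilde{A}=\{|k_1|\sim|k_3|\gg N_3^*\ \text{or}\ |k_2|\sim|k_4|\gg N_3^*\}$, which is what guarantees $|\Omega_4|\gtrsim (N_1^*)^2$ there and hence boundedness of $M_4/\Omega_4$; ``split according to the resonance function'' does not by itself exclude small denominators in the near-resonant region, which you correctly flag as a risk but do not resolve. Second, to apply Strichartz estimates to $\Lambda_4(\overline{M}_4)$ and $\Lambda_6(\overline{M}_6)$ one must pass the symbol through to physical space; the paper does this via a Fourier-series expansion of the localized multiplier, which requires not only size bounds but also derivative (regularity) bounds on $\overline{M}_4$ and on $M_4/\Omega_4$ after almost-orthogonal frequency localization (Lemmas \ref{lem:SizeRegularityResonantPart2d} and \ref{lem:2dBoundaryTermSizeRegularity}) --- an ingredient absent from the proposal. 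Third, the per-step increments $N^{-1+}\lambda^{-1/2}$ for the resonant quartic part and $N^{-2+}$ for the sextic part, from which the threshold $s>\frac{3}{5}$ follows, rest on the case analysis of Propositions \ref{prop:EnergyGrowthResonant2d} and \ref{prop:NonResonantEstimate2d} (the cases $N_3^*\ll N$, $N_3^*\gtrsim N\gg N_4^*$, $N_4^*\gtrsim N$, together with the mean-value-theorem gain $|\overline{M}_4|\lesssim m(N_1^*)N_1^*m(N_3^*)N_3^*$ on the resonant set). Without carrying these out, the ``careful bookkeeping'' yielding $\kappa(s)>0$ iff $s>\frac{3}{5}$ cannot be verified, so the proposal should be regarded as a correct roadmap reproducing the paper's architecture rather than a proof.
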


\medskip

\emph{Outline of the paper.} In Section \ref{section:Preliminaries} we fix notations and introduce the function spaces on arbitrary tori. 
In one dimension we show linear and bilinear Strichartz estimates on the time-scale $\lambda/N$ on large tori in adapted function spaces. In two dimensions we stick to the unit time scale on possibly large tori. In Section \ref{section:AuxiliaryLWP} we show local well-posedness on $\T_\lambda$ on a time-scale comparable to $\lambda/N$ for $s>0$ with small mass constraint. On $\T^2_\lambda$ we show large data well-posedness for $s>0$ on a time scale of length $\lambda^{-\delta}$.

 In Section \ref{section:ModifiedEnergies} we explain how we modify the $I$-energy by resonant decomposition depending on the dimension. In Section \ref{section:EnergyGrowthBounds1d} slow growth of the  modified energy on $\T_\lambda$ is established, and in Section \ref{section:EnergyGrowthBounds2d} the corresponding analysis on $\T^2_\lambda$ is carried out. The proofs of Theorems \ref{thm:ImprovedGWPQuinticNLS} and \ref{thm:GWPNLS2d} are concluded in Section \ref{section:Conclusion}.
 
\section{Preliminaries}
\label{section:Preliminaries}
\subsection{Fourier transform conventions}

Recall that for $\underline{\gamma} \in (1/2,1]^{d-1}$ we set
\begin{equation*}
\T^d_{\underline{\gamma}} = \T \times \T_{\gamma_1} \times \ldots \times \T_{\gamma_{d-1}}.
\end{equation*}
For fixed $\underline{\gamma} \in (1/2,1]^{d-1}$, we consider rescaled tori $\T^d_\lambda := \T_{\lambda} \times \T_{\gamma_1 \lambda} \times \ldots \T_{\gamma_{d-1} \lambda}$ with large period $\lambda \geq 1$. We suppress the notation of $\underline{\gamma}$ for the rescaled tori because it will be fixed at the outset, and the estimates will be independent of $\underline{\gamma} \in (1/2,1]^{d-1}$.

 Let $f: \T^d_\lambda \to \C$ be measurable. The Lebesgue norms are given by
 \begin{equation*}
 \| f \|_{L^p(\T^d_\lambda)} = \big( \int_{\T^d_\lambda} |f(x)|^p dx \big)^{\frac{1}{p}}
 \end{equation*}
 for $1 \leq p < \infty$ with the usual modification for $p = \infty$.

 The Fourier coefficients of $f \in L^2(\T^d_\lambda)$ satisfy $k \in \Z^d_\lambda := \Z / \lambda \times \Z / (\lambda \gamma_1) \times \ldots \Z / (\lambda \gamma_{d-1})$\footnote{To be consistent, we suppress dependence on $\gamma$ likewise on the frequency side.} are given by
\begin{equation*}
\hat{f}(k) = \int_{\T^d_\lambda} e^{-i k \cdot x} f(x) dx,
\end{equation*}
and the Fourier inversion formula reads
\begin{equation*}
f(x) = \frac{1}{(\gamma_1 \ldots \gamma_{d-1}) \lambda^d} \sum_{k \in \Z^d/\lambda} e^{i k \cdot x} \hat{f}(k).
\end{equation*}
The dual $(\T^d_\lambda)^* = \Z^d_{\lambda}$ is endowed with normalized counting measure. Let $\tilde{f}:\Z^d_{\lambda} \to \C$ and $A \subseteq \Z^d/\lambda$. We define
\begin{equation*}
\int_A \tilde{f}(k) (dk)_\lambda = \frac{1}{\lambda^d (\gamma_1\ldots \gamma_{d-1})} \sum_{k \in A} \tilde{f}(k).
\end{equation*}

The Fourier transform conventions are supposed to ensure that Plancherel's theorem holds true without constant depending on $\lambda$:
\begin{equation*}
\| f \|^2_{L^2(\T^d_\lambda)} = \frac{1}{\lambda^d (\gamma_1\ldots \gamma_{d-1})} \sum_{k \in \Z^d_\lambda} |\hat{f}(k)|^2.
\end{equation*}

Sobolev norms on $\T^d_\lambda$ are defined by
\begin{equation*}
\| f \|^2_{H^s(\T^d_\lambda)} = \frac{1}{\lambda^d (\gamma_1\ldots \gamma_{d-1})} \sum_{k \in \Z^d_\lambda} \langle k \rangle^{2s} |\hat{f}(k)|^2.
\end{equation*}
For $A \subseteq \Z^d_\lambda$ we denote with $P_A : L^2(\T^d_\lambda) \to L^2(\T^d_\lambda)$ the Fourier projection to $A$. For $N \in 2^{\N_0}$ we denote with $P_N$ the smooth frequency projection to frequencies of size $N$. For $N=1$, we consider frequencies of size $\lesssim 1$.

\subsection{Function spaces}

In the following we define adapted function spaces, which behave well with sharp time cutoff. This will be very useful in the analysis on ``long" time-intervals. $V^p$-spaces were firstly considered by Wiener \cite{Wiener1979}. The predual space $U^p$ has an atomic structure and comprises a logarithmic refinement of the Fourier restriction space; see \cite{KochTataru2005,KochTataru2007} for first applications.

For an introduction to $U^p$-/$V^p$-spaces we refer to Hadac--Herr--Koch \cite{HadacHerrKoch2009} (see also \cite{HadacHerrKoch2010}). The $Y^s$-spaces introduced below were firstly used in \cite{HerrTataruTzvetkov2011} and the following summary can also be found in \cite{HerrKwak2023}.

\begin{definition}
Let $\mathcal{Z}$ be the collection of finite non-decreasing sequences $\{ t_k \}_{k=0}^K$ in $\R$. We define $V^2$ as the space of all right-continuous functions $u: \R \to \C$ with $\lim_{t \to -\infty} u(t) = 0$ and
\begin{equation*}
\| u \|_{V^2} = \big( \sup_{ \{t_k \}_{k=0}^K \in \mathcal{Z}} \sum_{k=1}^K |u(t_k) - u(t_{k-1})|^2 \big)^{\frac{1}{2}} < \infty.
\end{equation*}
\end{definition}
The following $U^p_{\Delta}$-/$V^p_{\Delta}$-spaces will be useful for intermediate steps. The function spaces, in which we formulate the well-posedness results, are the $Y_{\pm}^s$-spaces defined in Definition \ref{definition:YSpaces}.
\begin{definition}
Let $1 \leq p <\infty$. We define
\begin{equation*}
V^p_{\Delta} = \{ u: \R \times \T^d_\lambda \to \C : e^{-it \Delta} u \in V^p L^2(\T^d_\lambda) \}
\end{equation*}
with norm given by
\begin{equation*}
\| u \|_{V_{\Delta}^p} = \big( \sup_{ \{t_k \}_{k=0}^K \in \mathcal{Z}} \sum_{k=1}^K \| e^{-i t_k \Delta} u(t_k) - e^{-i t_{k-1} \Delta} u(t_{k-1}) \|_{L^2}^p \big)^{\frac{1}{p}} < \infty.
\end{equation*}
\end{definition}
The predual $U^p$-spaces \cite{HadacHerrKoch2009} admit an atomic decomposition:
\begin{definition}
A $U^p_{\Delta}$-atom is a function $f: \R \times \T^d_\lambda \to \C$, which admits the representation
\begin{equation*}
f = \sum_{k=1}^N 1_{[t_{k-1},t_k)} a_k, \quad \sum_{k=1}^N \| a_k \|_{L^2}^p = 1.
\end{equation*}
The $U^p$-norm of $u: \R \times \T^d_\lambda \to \C$ is given by
\begin{equation*}
\| u \|_{U^p_\Delta} = \inf \big( \{ \sum_{k=1}^\infty |\lambda_k| : \, u = \sum_{k=1}^\infty \lambda_k f_k \text{ with } \big( \lambda_k \big) \in \ell^1, \quad f_k \text{ is a } U^p_{\Delta}-\text{atom} \} \big).
\end{equation*}
The convergence is in $L^\infty(\R;L^2)$.
\end{definition}

\begin{remark}[Transfer~principle~for~$U^p_{\Delta}$-spaces]
The atomic structure allows one to transfer estimates like linear or bilinear Strichartz estimates for free solutions to $U^p_{\Delta}$-functions, e.g., the linear Strichartz estimates yield
\begin{equation*}
\| P_N e^{it \Delta} f \|_{L^p_{t,x}(\T \times \T^d_\lambda)} \lesssim_\varepsilon (\lambda N)^\varepsilon \| f \|_{U^p_{\Delta}}
\end{equation*}
for $p = \frac{2(d+2)}{d}$. Likewise, the bilinear Strichartz estimates transfer to functions in $U^2_\Delta$. We refer to \cite[Proposition~2.19]{HadacHerrKoch2009} for details.
\end{remark}

\begin{definition}
\label{definition:YSpaces}
For $s \in \R$, we define $Y_{\pm}^s$ as the space $u: \R \times \T^d_\lambda \to \C$ such that $e^{it |\xi|^2} \widehat{u(t)} (\xi)$ lies in $V^2$ for each $\xi \in \Z^d_{\lambda}$ and
\begin{equation*}
\| u \|_{Y_{\pm}^s} = \big( \frac{1}{\lambda^d} \sum_{\xi \in \Z^d_\lambda} (1+|\xi|^2)^s \| e^{\pm it |\xi|^2} \widehat{u(t)}(\xi) \|^2_{V^2} \big)^{\frac{1}{2}} < \infty.
\end{equation*}
For $T > 0$ and $u:[0,T] \times \T^d_\lambda \to \C$ we define
\begin{equation*}
\| u \|_{Y^s_{\pm,T}} = \big( \frac{1}{\lambda^d} \sum_{\xi \in \Z^d_\lambda} (1+|\xi|^2)^s \| e^{\pm it |\xi|^2} \widehat{u(t)}(\xi) \|^2_{V^2(0,T)} \big)^{\frac{1}{2}}.
\end{equation*}
\end{definition}

Note that $\| u \|_{Y^s_{+,T}} = \| \overline{u} \|_{Y^s_{-,T}}$. We write $Y^s_T = Y^s_{+,T}$ for brevity.

\begin{remark}[Transfer~principle~for~$Y^s$-spaces]
\label{remark:TransferY}
The following embedding is immediate from the definition:
\begin{equation*}
Y^0_T \hookrightarrow V^2_\Delta.
\end{equation*}
Moreover, it holds the following \cite{HadacHerrKoch2009} for $p>2$:
\begin{equation*}
V^2_\Delta \hookrightarrow U^p_{\Delta}.
\end{equation*}
\end{remark}

\begin{proposition}[{\cite{HerrKwak2023}}]
\label{prop:PropertiesYs}
$Y^s$-norms have the following properties:
\begin{itemize}
\item Let $A$, $B$ be disjoint subsets of $\Z^d_{ \lambda}$. For $s \in \R$, it holds
\begin{equation}
\label{eq:DisjointYsProperty}
\| P_{A \cup B} u \|^2_{Y^s} = \| P_A u \|^2_{Y^s} + \| P_B u \|^2_{Y^s}.
\end{equation}
\item For $s \in \R$, time $T>0$, and a function $f \in L^1 H^s$ we have
\begin{equation}
\label{eq:DualityYs}
\| \chi_{[0,T)} \cdot \int_0^t e^{i(t-t') \Delta} f(t') dt' \|_{Y^s} \lesssim \sup_{\substack{v \in Y^{-s}: \\ \| v \|_{Y^{-s}} \leq 1}} \big| \int_0^T \int_{\T_\lambda} \overline{v} f dx dt \big|.
\end{equation}
\end{itemize}
\end{proposition}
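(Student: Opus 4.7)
The first identity \eqref{eq:DisjointYsProperty} is essentially Pythagoras on the frequency side. Since $A \cap B = \emptyset$, one has $\widehat{P_{A \cup B} u}(\xi) = (1_A(\xi) + 1_B(\xi)) \hat u(\xi)$ with disjoint supports. Substituting into the definition of the $Y^s$-norm and splitting the $\ell^2$-sum over $\xi \in A \cup B$ into the sums over $\xi \in A$ and $\xi \in B$ yields \eqref{eq:DisjointYsProperty} immediately.

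For \eqref{eq:DualityYs}, my plan is to apply the $V^2$-$U^2$ duality fiberwise in the frequency $\xi$ and then assemble the resulting linear functionals into a single test function $v \in Y^{-s}$ via Plancherel. Setting $F_w(t, \xi) := e^{it |\xi|^2} \widehat{w(t)}(\xi)$ for any function $w$, one has $\|w\|_{Y^s}^2 = \lambda^{-d} \sum_\xi (1 + |\xi|^2)^s \|F_w(\cdot, \xi)\|_{V^2}^2$. For the Duhamel term $u = \chi_{[0,T)} \int_0^t e^{i(t-t') \Delta} f(t') dt'$ a direct computation gives
\[
F_u(t, \xi) = \chi_{[0,T)}(t) \int_0^t F_f(t', \xi) \, dt'.
\]
By the $V^2$-$U^2$ duality of Hadac--Herr--Koch \cite{HadacHerrKoch2009}, at each fixed $\xi$ one can select $h_\xi \in U^2$ with $\|h_\xi\|_{U^2} \leq 1$ realizing $\|F_u(\cdot, \xi)\|_{V^2}$ as a Stieltjes pairing $\int \overline{h_\xi} \, dF_u(\cdot, \xi)$, which after disposing of the terminal left-jump at $T$ reduces to $\int_0^T \overline{h_\xi(t)} F_f(t, \xi) \, dt$.

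To assemble these fiberwise bounds into a single bilinear pairing, I would choose coefficients $\{e_\xi\}_\xi$ with $\lambda^{-d} \sum_\xi e_\xi^2 = 1$ saturating the $\ell^2$-duality in $\xi$, so that
\[
\|u\|_{Y^s} = \frac{1}{\lambda^d} \sum_\xi e_\xi (1 + |\xi|^2)^{s/2} \int_0^T \overline{h_\xi(t)} F_f(t, \xi) \, dt.
\]
Defining $v$ through $F_v(t, \xi) := e_\xi (1 + |\xi|^2)^{s/2} h_\xi(t)$, Plancherel identifies the right-hand side as $\int_0^T \int_{\T^d_\lambda} \overline{v} f \, dx \, dt$. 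Moreover,
\[
\|v\|_{Y^{-s}}^2 = \frac{1}{\lambda^d} \sum_\xi (1+|\xi|^2)^{-s} \cdot e_\xi^2 (1+|\xi|^2)^{s} \|h_\xi\|_{V^2}^2 \lesssim \frac{1}{\lambda^d} \sum_\xi e_\xi^2 = 1,
\]
using the embedding $U^2 \hookrightarrow V^2$ recorded in Remark \ref{remark:TransferY}. Normalizing $v$ to unit $Y^{-s}$-norm yields the claimed estimate.

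The main technical obstacle I anticipate is the careful treatment of the cutoff $\chi_{[0,T)}$ in the fiberwise duality: the function $F_u(\cdot, \xi)$ has a left-jump at $T$ of size $\int_0^T F_f(t', \xi) \, dt'$, which introduces an additional boundary term $-\overline{h_\xi(T^-)} \int_0^T F_f$ into the Stieltjes pairing. A standard workaround is to replace $h_\xi$ by $h_\xi \cdot \chi_{[0, T)}$, which remains an admissible element of $U^2$ with comparable norm and cancels the boundary contribution without affecting the absolutely continuous part of the pairing. The remaining bookkeeping --- taking real parts, passing to the supremum, and a density reduction to smooth $f$ --- is routine.
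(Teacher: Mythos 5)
Your proof is correct. Note that the paper itself offers no proof of this proposition --- it is quoted from Herr--Kwak (and ultimately goes back to Proposition 2.10/2.11 of Herr--Tataru--Tzvetkov and the $U^2$--$V^2$ duality of Hadac--Herr--Koch) --- and your argument is exactly the standard one from those references: the orthogonality identity is immediate from the definition of $Y^s$ as a weighted $\ell^2_\xi$-sum of fiberwise $V^2$-norms, and the Duhamel bound follows from the fiberwise identity $F_u(t,\xi)=\chi_{[0,T)}(t)\int_0^t F_f(t',\xi)\,dt'$, the $V^2$--$U^2$ duality in each fiber, and reassembly of the near-extremizers $h_\xi$ into a test function $v$ with $\|v\|_{Y^{-s}}\lesssim 1$ via $\ell^2_\xi$-duality, Plancherel, and $U^2\hookrightarrow V^2$. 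Your treatment of the left-jump of $F_u(\cdot,\xi)$ at $T$ is the one genuinely delicate point, and your fix is adequate; an equivalent and slightly cleaner route is to dominate $\|\chi_{[0,T)}G\|_{V^2}$ by the $V^2$-norm of the stopped (continuous) function $G(\min(t,T))$, for which the atom pairing automatically truncates the time integral to $[0,T]$. Two harmless caveats: exact extremizers in the $(U^2)^*=V^2$ duality need not exist, so one should take near-extremizers (which costs only a constant), and measurability of $\xi\mapsto h_\xi$ is a non-issue since $\Z^d_\lambda$ is countable.
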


\subsection{Linear and bilinear Strichartz estimates}

In the following we consider linear and bilinear Strichartz estimates on $\T^d_\lambda$.

In one dimension, we consider times up to $\lambda/N$, which is the time-scale on which we analyze \eqref{eq:MassCriticalNLS} on $\T_\lambda$. We recall the bilinear Strichartz estimate on $\T$ due to Moyua--Vega \cite{MoyuaVega2008} and Hani \cite{Hani2012} (see \cite[Appendix]{Schippa2023} for a different proof):
\begin{theorem}
Let $N \in 2^{\N_0}$ and $I_1$, $I_2$ be intervals contained in $[-10N,10N]$ with
\begin{equation*}
\inf_{k_i \in I_i} |k_1-k_2| \gtrsim N.
\end{equation*}
Then the following estimate holds:
\begin{equation*}
\| P_{I_1} e^{it \partial_{xx}} f_1 P_{I_2} e^{it \partial_{xx}} f_2 \|_{L^2_t([0,N^{-1}],L_x^2(\T))} \lesssim N^{-\frac{1}{2}} \| f_1 \|_{L^2(\T)} \| f_2 \|_{L^2(\T)}.
\end{equation*}
\end{theorem}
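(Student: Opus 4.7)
The plan is to expand the product in Fourier, introduce a smooth time cutoff at the scale $N^{-1}$, and apply Plancherel in space-time, so that the required $N^{-1/2}$ gain comes from the fact that the map
\[
(k_1,k_2) \in I_1 \times I_2 \longmapsto (k_1+k_2,\, k_1^2+k_2^2)
\]
is essentially injective (up to swap) at the quantitative rate dictated by the separation hypothesis.

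First, I write
\[
P_{I_1} e^{it\partial_{xx}} f_1 \cdot P_{I_2} e^{it\partial_{xx}} f_2 (t,x) = \sum_{k_1 \in I_1,\, k_2 \in I_2} \widehat{f_1}(k_1) \widehat{f_2}(k_2)\, e^{i(k_1+k_2)x}\, e^{-it(k_1^2+k_2^2)},
\]
pick a smooth bump $\chi(t)$ equal to $1$ on $[0,N^{-1}]$ with $\|\chi\|_{L^2_t}^2 \sim N^{-1}$, and expand $\|\chi \cdot P_{I_1} e^{it\partial_{xx}} f_1 \cdot P_{I_2} e^{it\partial_{xx}} f_2\|_{L^2_{t,x}(\R \times \T)}^2$ by Plancherel. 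This yields, after integrating out the time frequency,
\[
\|\chi\|_{L^2}^2 \sum_{m} \sum_{\substack{k_1+k_2 = m \\ k_1'+k_2' = m}} \widehat{f_1}(k_1)\widehat{f_2}(k_2) \overline{\widehat{f_1}(k_1') \widehat{f_2}(k_2')}\, \widehat{\psi}\big(\Delta\phi\big),
\]
with $\psi = |\chi|^2/\|\chi\|_{L^2}^2$, $\widehat{\psi}(0)=1$ and $|\widehat{\psi}(\tau)| \lesssim (1+|\tau|/N)^{-2}$, and
\[
\Delta\phi = (k_1^2+k_2^2) - ((k_1')^2+(k_2')^2).
\]

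The algebraic identity, using $k_1+k_2 = k_1'+k_2'$,
\[
\Delta\phi = (k_1 - k_1')\big((k_1 - k_2) + (k_1' - k_2')\big),
\]
together with the fact that the separation hypothesis forces $k_1 - k_2$ and $k_1' - k_2'$ to share a sign and each satisfy $|k_1-k_2|, |k_1'-k_2'| \gtrsim N$, yields the modulation lower bound $|\Delta\phi| \gtrsim N|k_1-k_1'|$. The diagonal contribution $k_1 = k_1'$ is then controlled by $\|\chi\|_{L^2}^2 \|f_1\|_{L^2}^2 \|f_2\|_{L^2}^2 \lesssim N^{-1}\|f_1\|_{L^2}^2 \|f_2\|_{L^2}^2$, which is already the claimed bound.

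For the off-diagonal part, I use $|\widehat{\psi}(\Delta\phi)| \lesssim (1+|k_1-k_1'|)^{-2}$, set $h = k_1-k_1'$, and by Cauchy--Schwarz first in the $m$-sum (via the shift $k_2 \mapsto k_2+h$) and then in $k_1$, reduce everything to
\[
\frac{\|f_2\|_{L^2}^2}{N} \sum_{h\neq 0} \frac{1}{(1+|h|)^2} \sum_{k_1} |\widehat{f_1}(k_1)||\widehat{f_1}(k_1+h)| \;\lesssim\; \frac{1}{N}\,\|f_1\|_{L^2}^2 \|f_2\|_{L^2}^2.
\]
Taking the square root concludes the proof. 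The main technical point is the off-diagonal bookkeeping: the naive use of Schur's test on $1/|h|$ would produce a logarithmic loss, and it is essential that the modulation bound gives the full quadratic weight $(1+|h|)^{-2}$ after the rapid decay of $\widehat{\psi}$ and a Cauchy--Schwarz argument are combined. Equivalently, one may phrase the argument by noting that $(m,n) = (k_1+k_2, k_1^2+k_2^2)$ determines $\{k_1,k_2\}$ at most two-to-one, and reading off the $N^{-1/2}$ gain from counting lattice points in a box of height $\sim N$ in the $n$-variable.
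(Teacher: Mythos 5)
The paper does not prove this theorem: it is quoted as a known result of Moyua--Vega and Hani, with a pointer to the appendix of \cite{Schippa2023} for an alternative proof. So there is no internal proof to compare against; judged on its own, your argument is correct and is essentially the classical $TT^*$/Fourier-expansion proof of bilinear Strichartz on the circle. The three load-bearing points all check out: (i) the factorization $\Delta\phi=(k_1-k_1')\big((k_1-k_2)+(k_1'-k_2')\big)$ under the constraint $k_1+k_2=k_1'+k_2'$ is correct; (ii) the sign coherence of $k_1-k_2$ and $k_1'-k_2'$ does require that $I_1,I_2$ are \emph{intervals} (so that $\{k_1-k_2\}$ is a connected set avoiding $(-cN,cN)$) — you use this implicitly and should say so, since the statement would otherwise be delicate; (iii) the off-diagonal sum factors as $\big(\sum_{k_1}|\hat f_1(k_1)||\hat f_1(k_1-h)|\big)\big(\sum_{k_2}|\hat f_2(k_2)||\hat f_2(k_2+h)|\big)$, each factor bounded by Cauchy--Schwarz, and the weight $(1+|h|)^{-2}$ is summable. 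Your closing worry about a logarithmic loss is moot: since $\chi$ is smooth on scale $N^{-1}$, $\widehat{|\chi|^2}$ decays like $N^{-1}(1+|\tau|/N)^{-M}$ for any $M$, so any power of decay in $h$ beyond the first is available for free. This route is the same mechanism as the two-dimensional bilinear estimate \eqref{eq:BilinearStrichartzEstimate} of De Silva \emph{et al.} and is arguably more elementary than Hani's oscillatory-integral approach; Moyua--Vega's original argument is a closely related divisor/lattice-point count, which is the content of your final ``equivalently'' remark.
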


Moreover, we have the following linear Strichartz estimates due to Burq--G\'erard--Tzvetkov \cite{BurqGerardTzvetkov2004} and Guo--Li--Yung \cite{GuoLiYung2021}.
\begin{theorem}
Let $N \in 2^{\N_0}$, and $\varepsilon>0$. Then the following estimates hold:
\begin{align}
\label{eq:FrequencyDependentStrichartz}
\| P_N e^{it \partial_{xx}} f \|_{L^6_t([0,N^{-1}],L_x^6(\T))} &\lesssim \| f \|_{L^2(\T)}, \\
\| P_N e^{it \partial_{xx}} f \|_{L^6_{t,x}(\T^2)} &\lesssim_\varepsilon \log(N)^{2+\varepsilon} \| f \|_{L^2(\T)}.
\end{align}
\end{theorem}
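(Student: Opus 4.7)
\medskip

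These two estimates will be handled by quite different techniques, so I would treat them separately.

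\medskip

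For the frequency-dependent estimate \eqref{eq:FrequencyDependentStrichartz}, my plan is parabolic rescaling. Setting $y = N x$ and $\tau = N^2 t$ maps $\T \times [0,N^{-1}]$ to $\T_N \times [0,N]$ and moves the frequency support from $|\xi| \sim N$ to $|\eta| \sim 1$; a direct change of variables shows that the desired inequality is equivalent to
\begin{equation*}
\| e^{i\tau \partial_{yy}} g \|_{L^6_{\tau,y}([0,N] \times \T_N)} \lesssim \| g \|_{L^2(\T_N)}
\end{equation*}
for $g$ with Fourier support in $\{|\eta| \lesssim 1\}$. Since the period $2\pi N$ greatly exceeds the inverse frequency scale, Poisson summation expresses the Schr\"odinger kernel on $\T_N$ as a sum of translates of the Euclidean kernel $(4\pi i \tau)^{-1/2} e^{iy^2/(4\tau)}$; for $y$ in the fundamental domain and $\tau \in [0,N]$ only the central translate contributes at leading order and the others integrate out by stationary phase. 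The problem thereby reduces to the Euclidean $L^6_{\tau,y}(\R^2)$-Strichartz estimate for the $1$D free propagator, which is the Strichartz-admissible pair $(q,r) = (6,6)$ in dimension one.

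\medskip

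For the $L^6_{t,x}(\T^2)$-estimate, I would follow the additive-combinatorial strategy of Guo--Li--Yung. Writing $P_N f = \sum_{|n| \sim N} a_n e^{i n x}$ and expanding $|u|^6 = |u^3|^2$, Plancherel in the space-time torus yields
\begin{equation*}
\| e^{i t \partial_{xx}} P_N f \|_{L^6_{t,x}(\T^2)}^6 = \sum_{m,\omega} \Bigl| \sum_{\substack{n_1+n_2+n_3 = m \\ n_1^2+n_2^2+n_3^2 = \omega}} a_{n_1} a_{n_2} a_{n_3} \Bigr|^2.
\end{equation*}
Cauchy--Schwarz reduces matters to the arithmetic question of bounding the maximal number $r(m,\omega)$ of representations of a pair $(m,\omega)$ as a sum of three first- and three second-power contributions from $\{-N,\ldots,N\}$. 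Fixing one of the summands, each such representation corresponds to a lattice point lying on a translate of a circle of radius $\lesssim N$, and applying the Szemer\'edi--Trotter incidence bound to the ensuing point-circle configuration produces the sharp $\log(N)^{2+\varepsilon}$ factor; this strictly improves Bourgain's $\exp(c \log N/\log\log N)$ bound based on the divisor function.

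\medskip

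The main obstacle lies in the second estimate: the incidence count has to be executed uniformly in $(m,\omega)$, and one must carefully absorb degenerate configurations (coincident indices, or families of circles clustering along a special algebraic curve) into the logarithmic factor, which is the delicate combinatorial heart of Guo--Li--Yung's argument. By comparison the first estimate is more straightforward, with the only subtlety being the rigorous passage from $\T_N$ to $\R$ in the Strichartz bound, which is standard for data with frequency support much smaller than the reciprocal of the period.
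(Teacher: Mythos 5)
This theorem is not proved in the paper at all: it is recalled verbatim from Burq--G\'erard--Tzvetkov (for \eqref{eq:FrequencyDependentStrichartz}) and Guo--Li--Yung (for the $L^6_{t,x}(\T^2)$ bound), so your sketch has to be judged on its own merits. Your treatment of \eqref{eq:FrequencyDependentStrichartz} is fine: the parabolic rescaling to unit frequencies on $\T_N$ over times $[0,N]$ is the correct equivalence, and Poisson summation reduces the kernel to a sum of translates of the Euclidean kernel in which at most $O(1)$ translates carry a stationary point, so the dispersive bound $|K_\tau(y)|\lesssim |\tau|^{-1/2}$ holds up to time $N$ and Keel--Tao gives the admissible pair $(6,6)$. (Strictly, the non-central translates are killed by \emph{non}-stationary phase or contribute at the same dispersive order in bounded number; they do not all ``integrate out''.)

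The second part has a genuine gap. After Plancherel, bounding the double sum by $\sup_{m,\omega} r(m,\omega)\,\|a\|_{\ell^2}^6$ is exactly Bourgain's 1993 argument, and it is provably stuck at $\exp(c\log N/\log\log N)$: the set $\{n_1+n_2+n_3=m,\ n_1^2+n_2^2+n_3^2=\omega\}$ is a circle of radius $\lesssim N$, and there exist such circles carrying $\exp(c\log N/\log\log N)$ lattice points (the divisor-type bound is attained along a sparse sequence of $\omega$). No incidence theorem can repair this once you have passed to the supremum --- Szemer\'edi--Trotter applied to a single circle is vacuous. Moreover you are misattributing the tool: the Szemer\'edi--Trotter point--circle argument is Herr--Kwak's device for the $L^4_{t,x}$ estimate on $\T^2\times\T$ (as this paper itself remarks when discussing why that proof does not extend to irrational tori), not Guo--Li--Yung's. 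The actual content of Guo--Li--Yung is precisely to avoid the supremum: one must decompose according to the level sets of the representation function and exploit that circles with many lattice points are \emph{rare}, via moment/level-set estimates for the sum-of-two-squares function $r_2$ (e.g.\ $\sum_{\omega\le X} r_2(\omega)^2\sim X\log X$); this is where the $(\log N)^{2+\varepsilon}$ comes from. As written, your plan for the second estimate would only recover Bourgain's $\exp(c\log N/\log\log N)$ bound.
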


We have the following rescaled bilinear Strichartz estimates:
\begin{proposition}
\label{prop:BilinearStrichartzV2}
Let $M \in 2^{\N_0}$, and $I_1$, $I_2$ be intervals contained in $[-10M,10M]$ with
\begin{equation*}
\inf_{k_i \in I_i} |k_1 - k_2| \gtrsim M.
\end{equation*}
Then the following estimate holds:
\begin{equation}
\label{eq:BilinearStrichartzV2}
\| P_{I_1} u_1 P_{I_2} u_2 \|_{L^2_{t,x}([0,\frac{\lambda}{N}] \times \T_\lambda)} \lesssim (\log(\lambda)+1)^2 \big( \frac{1}{M} + \frac{1}{N} \big)^{\frac{1}{2}} \| P_{I_1} u_1 \|_{Y^0_+} \| P_{I_2} u_2 \|_{Y^0_+}.
\end{equation}
\end{proposition}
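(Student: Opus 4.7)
The plan has three stages: establish a free-solution bilinear Strichartz on $\T_\lambda$ over the target time window, transfer it to $U^2_\Delta$-functions via the atomic structure, and upgrade to $Y^0$ by an interpolation lemma at the cost of the announced logarithmic factors.

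First, I would deduce a free bilinear Strichartz at time scale $\lambda/M$ on $\T_\lambda$ by rescaling from the Moyua--Vega/Hani estimate on $\T$. Setting $v_i(t,x)=u_i(\lambda^2 t,\lambda x)$ sends solutions on $\T_\lambda$ to solutions on $\T$, maps frequencies of size $M$ on $\T_\lambda$ to frequencies of size $M\lambda$ on $\T$, and converts the time window $[0,(M\lambda)^{-1}]$ into $[0,\lambda/M]$; tracking the $L^2$-Jacobians yields
\begin{equation*}
\|P_{I_1}e^{it\partial_{xx}}g_1 \cdot P_{I_2}e^{it\partial_{xx}}g_2\|_{L^2([0,\lambda/M]\times\T_\lambda)}\lesssim M^{-1/2}\|g_1\|_{L^2}\|g_2\|_{L^2}.
\end{equation*}
If $N\geq M$, monotonicity gives $M^{-1/2}\leq(1/M+1/N)^{1/2}$ on the target window $[0,\lambda/N]$. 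If $N<M$, I partition $[0,\lambda/N]$ into $\lceil M/N\rceil$ subintervals of length $\lambda/M$, apply the previous estimate on each with the appropriately shifted free-evolution data, and square-sum over subintervals to obtain the bound $N^{-1/2}\leq(1/M+1/N)^{1/2}$.

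Second, I would pass from free solutions to $U^2_\Delta$-functions via the atomic transfer principle of Hadac--Herr--Koch, upgrading the right-hand side to $U^2_\Delta$-norms with the same constant. To reach $V^2_\Delta$ — and hence $Y^0$, by the embedding recalled in Remark~\ref{remark:TransferY} — I would invoke the Hadac--Herr--Koch bilinear interpolation lemma, combining the $U^2_\Delta\times U^2_\Delta\to L^2$ bound just obtained with a complementary $U^p_\Delta\times U^p_\Delta\to L^2$ bound for some $p>2$. A suitable auxiliary bound follows from the linear Strichartz estimate \eqref{eq:FrequencyDependentStrichartz} (applied on a cover of $[0,\lambda/N]$ by frequency-dependent subintervals) together with Hölder, producing a constant that grows at most polynomially in $M$, $N$, and $\lambda$. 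The interpolation then costs a logarithm of the ratio of the two bilinear constants per factor, and two factors produce the announced $(\log\lambda+1)^2$ loss.

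The main obstacle is calibrating the auxiliary $U^p$-bound so that the HHK entropy constant is of order $\log\lambda$ rather than something larger. Concretely, the ratio of the two bilinear constants must be polynomially bounded in $\lambda$; this requires extracting a workable $U^p\times U^p\to L^2$ inequality on the long time window $[0,\lambda/N]$ whose constant does not blow up faster than a power of $\lambda$. Once this is arranged, the logarithm of the ratio is of order $\log\lambda$, and the two-fold interpolation produces exactly the claimed $(\log\lambda+1)^2$ penalty.
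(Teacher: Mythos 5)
Your proposal follows essentially the same route as the paper's proof: a free-solution bilinear estimate obtained from the Moyua--Vega/Hani bound by rescaling and partitioning the time window into $\sim M/N$ pieces, transfer to $U^2_\Delta$ via the atomic structure, and a twofold Hadac--Herr--Koch interpolation against an auxiliary $U^p_\Delta$-bound with polynomially bounded constant to reach $V^2_\Delta$ (hence $Y^0$) at the cost of the squared logarithm. The only cosmetic differences are that the paper rescales the problem down to the unit torus rather than rescaling the estimate up to $\T_\lambda$, and interpolates against an $L^4_{t,x}$-based $U^4_\Delta$ estimate rather than an $L^6$-based one.
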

\begin{proof}
It suffices to show the following estimate for free solutions $u_i = e^{it \partial_x^2} f_i$:
\begin{equation}
\label{eq:FreeBilinearEstimate}
\| P_{I_1} u_1 P_{I_2} u_2 \|_{L^2_{t,x}([0,\frac{\lambda}{N}] \times \T_\lambda)} \lesssim \big( \frac{1}{M} + \frac{1}{N} \big)^{\frac{1}{2}} \| P_{I_1} f_1 \|_{L^2(\T_\lambda)} \| P_{I_2} f_2 \|_{L^2(\T_\lambda)}.
\end{equation}
Indeed, the above display implies by the transfer principle\footnote{This is a consequence of the atomic representation.} in $U^p$-spaces:
\begin{equation*}
\| P_{I_1} u_1 P_{I_2} u_2 \|_{L^2_{t,x}([0,\frac{\lambda}{N}] \times \T_\lambda)} \lesssim \big( \frac{1}{M} + \frac{1}{N} \big)^{\frac{1}{2}} \| P_{I_1} u_1 \|_{U^2_{\Delta}} \| P_{I_2} u_2 \|_{U^2_{\Delta}}.
\end{equation*}
This we can interpolate (see \cite[Proposition~2.20]{HadacHerrKoch2009} and \cite[Corollary~2.21]{HadacHerrKoch2009}) twice with the estimate
\begin{equation*}
\begin{split}
\| P_{I_1} u_1 P_{I_2} u_2 \|_{L^2_{t,x}([0,\frac{\lambda}{N}] \times \T_\lambda)} &\leq \| P_{I_1} u_1 \|_{L^4_{t,x}([0,\frac{\lambda}{N}] \times \T_\lambda)} \| P_{I_2} u_2 \|_{L^4_{t,x}([0,\frac{\lambda}{N}] \times \T_\lambda)} \\
&\lesssim \lambda^c \| P_{I_1} u_1 \|_{U^4_\Delta} \| P_{I_2} u_2 \|_{U^4_\Delta},
\end{split}
\end{equation*}
which incurs a squared logarithmic factor in $\lambda$. Lastly, the estimate \eqref{eq:FreeBilinearEstimate} follows from rescaling to the unit torus:
\begin{equation*}
\| P_{I_1} u_1 P_{I_2} u_2 \|_{L^2_{t,x}([0,\frac{\lambda}{N}] \times \T_\lambda)} = \lambda^{\frac{3}{2}} \| P_{\lambda I_1} u_{1 \lambda} P_{\lambda I_2} u_{2 \lambda} \|_{L^2_{t,x}([0,\frac{1}{\lambda N}] \times \T)}.
\end{equation*}
Note that $\lambda I_j$ are contained in $[-10 \lambda M, 10 \lambda M]$. If $M \gtrsim N$, we partition the time interval $[0,\frac{1}{\lambda N}]$ into intervals  $J$ of size $\frac{1}{\lambda M}$ on which we can use the bilinear Strichartz estimate:
\begin{equation*}
\begin{split}
\| P_{\lambda I_1} u_{1 \lambda} P_{\lambda I_2} u_{\lambda I_2} \|^2_{L^2_{t,x}([0,\frac{1}{\lambda N}] \times \T)} &= \sum_J \| P_{\lambda I_1} u_{1 \lambda} P_{\lambda I_2} u_{2 \lambda} \|^2_{L^2_{t,x}(J \times \T)} \\
&\lesssim \frac{1}{N \lambda} \| u_{1 \lambda}(0) \|^2_{L^2(\T)} \| u_{2 \lambda}(0) \|^2_{L^2(\T)}.
\end{split}
\end{equation*}
The claim follows then from reversing the scaling.

If $M \lesssim N$, we can use the bilinear estimate directly:
\begin{equation*}
\| P_{\lambda I_1} u_{1 \lambda} P_{\lambda I_2} u_{2 \lambda} \|^2_{L^2_{t,x}([0,\frac{1}{\lambda N}] \times \T)} \lesssim \frac{1}{M \lambda} \| u_{1 \lambda}(0) \|^2_{L^2(\T)} \| u_{2 \lambda}(0) \|^2_{L^2(\T)}.
\end{equation*}
\end{proof}

\begin{remark}
We note the following variant, which is immediate from complex conjugation: Let $I_1$, $I_2$ be intervals contained in $[-10M,10M]$ with
\begin{equation*}
\inf_{k_i \in I_i} |k_1 + k_2| \gtrsim M.
\end{equation*}
Then the following estimate holds:
\begin{equation*}
\| P_{I_1} u_1 P_{I_2} u_2 \|_{L^2_{t,x}([0,\frac{\lambda}{N}] \times \T_\lambda)} \lesssim (\log(\lambda) +1)^2 \big( \frac{1}{M} + \frac{1}{N} \big)^{\frac{1}{2}} \| P_{I_1} u_1 \|_{Y^0_{+,T}} \| P_{I_2} u_2 \|_{Y^0_{-,T}}.
\end{equation*}
\end{remark}

\begin{proposition}
\label{prop:LinearStrichartz}
Let $M \in 2^{\N_0}$. If $M \lesssim N$, the following estimate holds:
\begin{equation}
\label{eq:L6LowFrequenciesV2}
\| P_M u \|_{L^6_{t,x}([0,\frac{\lambda}{N}] \times \T_\lambda)} \lesssim \| u \|_{Y^0_T}.
\end{equation}
If $M \gg N$, the following estimate holds:
\begin{equation}
\label{eq:L6HighFrequenciesV2}
\| P_M u \|_{L^6_{t,x}([0,\frac{\lambda}{N}] \times \T_\lambda)} \lesssim \log(M)^3 \| u \|_{Y^0_T}.
\end{equation}
\end{proposition}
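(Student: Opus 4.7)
The plan mirrors the proof of Proposition \ref{prop:BilinearStrichartzV2}. I would first reduce to free solutions $u = e^{it\partial_x^2} f$ via the transfer principle for $U^p_\Delta$ spaces, together with the chain of embeddings $Y^0_T \hookrightarrow V^2_\Delta \hookrightarrow U^6_\Delta$ coming from Remark \ref{remark:TransferY} and the standard HHK embedding $V^2 \hookrightarrow U^p$ for $p>2$. It is then enough to prove, for free data $f$, the bound
\begin{equation*}
\|P_M e^{it\partial_x^2} f\|_{L^6_{t,x}([0,\lambda/N]\times\T_\lambda)} \lesssim \|f\|_{L^2(\T_\lambda)}
\end{equation*}
when $M\lesssim N$, and the analogous bound with an extra $\log(M)^3$ factor when $M\gg N$.

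Next I would rescale to the unit torus by setting $v(\tau,y) = u(\lambda^2\tau,\lambda y)$, which is again a free Schr\"odinger solution on $\T$ with initial datum $g(y) = f(\lambda y)$. Under this change of variables the time interval $[0,\lambda/N]$ maps to $[0,(\lambda N)^{-1}]$, frequencies of size $M$ on $\T_\lambda$ correspond to frequencies of size $\lambda M$ on $\T$, and the identities $\|u\|_{L^6_{t,x}} = \lambda^{1/2}\|v\|_{L^6_{t,x}}$ and $\|f\|_{L^2(\T_\lambda)} = \lambda^{1/2}\|g\|_{L^2(\T)}$ ensure that the rescaling factors cancel on both sides.

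In Case 1 ($M\lesssim N$) the rescaled interval satisfies $[0,(\lambda N)^{-1}]\subseteq[0,(\lambda M)^{-1}]$, so the frequency-dependent Strichartz estimate \eqref{eq:FrequencyDependentStrichartz} applied to $P_{\lambda M} e^{it\partial_x^2} g$ on $[0,(\lambda M)^{-1}]$ yields the desired bound $\lesssim \|g\|_{L^2(\T)}$. In Case 2 ($M\gg N$) one has $\lambda N\geq 1$, so $[0,(\lambda N)^{-1}]\subseteq [0,1]$, and I would instead invoke the Guo--Li--Yung $L^6_{t,x}(\T\times\T)$ estimate (the second line of \eqref{eq:FrequencyDependentStrichartz}) at frequency $\lambda M$, which gives the bound $\lesssim \log(\lambda M)^{2+\varepsilon}\|g\|_{L^2(\T)}$ and can be absorbed into $\log(M)^3$ in the parameter regimes relevant to the subsequent application.

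The proof is essentially structural once the rescaling is in place; the work is bookkeeping rather than new oscillatory analysis. The main care points are (i) tracking the $\lambda^{1/2}$-factors through the change of variables so that they cancel cleanly, (ii) checking that the passage to the $Y^0_T$ norm is cost-free, which is exactly the content of the embedding $V^2 \hookrightarrow U^p$ for $p>2$, and (iii) ensuring the residual $\log\lambda$ contribution from the $L^6_{t,x}(\T\times\T)$ estimate in Case 2 fits inside the claimed $\log(M)^3$ factor. The last point is the only slightly delicate one and is the analogue here of the explicit $(\log\lambda+1)^2$ loss appearing in Proposition \ref{prop:BilinearStrichartzV2}.
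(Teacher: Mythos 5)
Your proposal is correct and follows essentially the same route as the paper: reduce to free solutions via $Y^0_T \hookrightarrow V^2_\Delta \hookrightarrow U^6_\Delta$ and the transfer principle, rescale to the unit torus, and invoke the Burq--G\'erard--Tzvetkov frequency-dependent $L^6$ estimate when $M \lesssim N$ and the Guo--Li--Yung estimate (with the $\log(\lambda M)^{2+\varepsilon}$ loss absorbed into $\log(M)^3$ since $\lambda$ is polynomial in $N \ll M$) when $M \gg N$. In fact you supply more of the bookkeeping than the paper's own two-line proof does.
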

\begin{proof}
For estimate \eqref{eq:L6LowFrequenciesV2} it suffices to prove
\begin{equation*}
\| P_M u \|_{L^6_{t,x}([0,\frac{\lambda}{N}] \times \T_\lambda)} \lesssim \| u \|_{U^6_\Delta}
\end{equation*}
by embedding properties of $V^2_{\Delta} \hookrightarrow U^6_\Delta$. The above display is immediate from the estimates in \cite{BurqGerardTzvetkov2004} and rescaling. The second estimate \eqref{eq:L6HighFrequenciesV2} follows from the $L^6_{t,x}$-estimate on the torus due to Guo--Li--Yung \cite{GuoLiYung2021}.
\end{proof}

We recall the following linear and bilinear Strichartz estimates in adapted function spaces (see \cite{BourgainDemeter2015,FanStaffilaniWangWilson2018}):
\begin{proposition}
Let $\lambda \geq 1$ and $N \in 2^{\N_0}$. The following estimate holds:
\begin{equation}
\label{eq:2dLinearEstimateTransfer}
\| P_N u \|_{L^4_{t,x}(\T \times \T^2_\lambda)} \lesssim_\varepsilon \lambda^\varepsilon \| u \|_{Y^0}.
\end{equation}
If $\lambda \geq N$, then the estimate holds without loss of $\lambda^\varepsilon$.

Let $N_1 \ll N_2$. Then the estimate
\begin{equation}
\label{eq:2dBilinearEstimateTransfer}
\| P_{N_1} u_1 P_{N_2} u_2 \|_{L^2_{t,x}(\T \times \T^2_\lambda)} \lesssim_\varepsilon (\lambda N_2)^\varepsilon \big( \frac{1}{\lambda} + \frac{N_1}{N_2} \big)^{\frac{1}{2}} \| u_1 \|_{Y^0} \| u_2 \|_{Y^0}
\end{equation}
holds.
\end{proposition}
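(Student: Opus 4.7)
The plan is to reduce both estimates to their free-evolution counterparts already established in the literature, and then lift them to the adapted function spaces via the atomic/duality transfer principle for $U^p_\Delta$- and $V^2_\Delta$-spaces recorded in Remark \ref{remark:TransferY}.

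For the linear estimate \eqref{eq:2dLinearEstimateTransfer}, I would invoke the $\ell^2$-decoupling theorem of Bourgain--Demeter \cite{BourgainDemeter2015}, which, after rescaling to $\T^2_\lambda$, yields the free-solution bound
\[
\| P_N e^{it\Delta} f \|_{L^4_{t,x}([0,1] \times \T^2_\lambda)} \lesssim_\varepsilon \lambda^\varepsilon \| f \|_{L^2(\T^2_\lambda)}.
\]
In the regime $\lambda \geq N$, the Fourier lattice spacing $1/\lambda$ is finer than the reciprocal frequency scale $1/N$, so the frequency-localized free evolution effectively lives on $\R^2$ on the time interval $[0,1]$, and one can invoke the Euclidean Stein--Tomas $L^4_{t,x}$-Strichartz estimate, dropping the $\lambda^\varepsilon$-factor. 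The transfer principle \cite[Proposition 2.19]{HadacHerrKoch2009} promotes these bounds to $U^4_\Delta$-functions, and the embedding $Y^0_T \hookrightarrow V^2_\Delta \hookrightarrow U^4_\Delta$ from Remark \ref{remark:TransferY} concludes the argument.

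For the bilinear estimate \eqref{eq:2dBilinearEstimateTransfer}, I would cite the irrational-torus bilinear $L^2$-Strichartz estimate of Fan--Staffilani--Wang--Wilson \cite{FanStaffilaniWangWilson2018}, the natural extension of Bourgain's \eqref{eq:BilinearStrichartzEstimate},
\[
\| e^{it\Delta} P_{N_1} f_1 \cdot e^{it\Delta} P_{N_2} f_2 \|_{L^2_{t,x}([0,1] \times \T^2_\lambda)} \lesssim_\varepsilon (\lambda N_2)^\varepsilon \Bigl( \frac{1}{\lambda} + \frac{N_1}{N_2} \Bigr)^{\frac{1}{2}} \| f_1 \|_{L^2} \| f_2 \|_{L^2}.
\]
Transferring to $U^2_\Delta \times U^2_\Delta$ via the atomic representation is immediate. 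The remaining task, exactly as in the proof of Proposition \ref{prop:BilinearStrichartzV2}, is to pass from $U^2_\Delta$ to $V^2_\Delta$; since $V^2_\Delta \not\hookrightarrow U^2_\Delta$, I would interpolate the sharp $U^2$-bilinear bound against a crude $U^p_\Delta$-bilinear bound for some $p>2$ (obtained by Hölder from the linear $L^4$-estimate already proved), using \cite[Proposition 2.20, Corollary 2.21]{HadacHerrKoch2009}. Applying the embedding $Y^0 \hookrightarrow V^2_\Delta$ then yields the claim.

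The main obstacle, and the only nontrivial step, is this last interpolation: one has to verify that the logarithmic factors in $\lambda$ and $N_2$ coming from the $U^p$--$V^2$ endpoint are dominated by the $\varepsilon$-loss $(\lambda N_2)^\varepsilon$ already present in the free-solution estimate. This is the two-dimensional analogue of the log-losses tracked in the proof of Proposition \ref{prop:BilinearStrichartzV2}, and here it is harmless because the statement admits an arbitrary polynomial loss $(\lambda N_2)^\varepsilon$ from the outset.
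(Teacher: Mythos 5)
Your proposal is correct and follows essentially the same route as the paper: free-solution linear and bilinear estimates (decoupling and semiclassical-time Strichartz for the linear bound, Fan--Staffilani--Wang--Wilson for the bilinear one), transferred to $U^p_\Delta$ by atomicity and then to $V^2_\Delta \supseteq Y^0$ by interpolating the sharp $U^2_\Delta$ bilinear bound against a cruder $U^4_\Delta$ bound obtained from H\"older and the linear $L^4$ estimate, with the resulting logarithmic loss absorbed into $(\lambda N_2)^\varepsilon$. The only cosmetic difference is that for $\lambda \geq N$ the paper invokes the scale-invariant frequency-dependent-time Strichartz estimates of Burq--G\'erard--Tzvetkov together with rescaling, which is the rigorous form of your ``effectively Euclidean'' heuristic.
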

\begin{proof}
The linear estimate with $\lambda \geq N$ is a consequence of the scale-invariant Strichartz estimates on frequency-dependent times due to Burq--Gérard--Tzvetkov \cite{BurqGerardTzvetkov2004} and rescaling. For $N \geq \lambda$ we can use the $L^4_{t,x}$-Strichartz estimates on the (irrational) torus
\begin{equation*}
\| P_N e^{it \Delta} f \|_{L^4_{t,x}(\T \times \T^2_{\underline{\gamma}})} \lesssim_\varepsilon N^\varepsilon \| f \|_{L^2(\T^2_{\underline{\gamma}})}
\end{equation*}
and rescale. This extends to $Y^0$ following Remark \ref{remark:TransferY}. The bilinear estimate \eqref{eq:2dBilinearEstimateTransfer} follows from the bilinear estimate for free solutions:
\begin{equation*}
\| P_{N_1} u_1 P_{N_2} u_2 \|_{L^2_{t,x}(\T \times \T^2_\lambda)} \lesssim_\varepsilon (\lambda N_2)^\varepsilon \big( \frac{1}{\lambda} + \frac{N_1}{N_2} \big)^{\frac{1}{2}} \| u_1(0) \|_{L^2(\T^d_\lambda)} \| u_2(0) \|_{L^2(\T^d_\lambda)}.
\end{equation*}
Interpolation with the $L^4_{t,x}$-Strichartz estimate and an application of the transfer principle conclude the proof.
\end{proof}

\section{Auxiliary local well-posedness results}
\label{section:AuxiliaryLWP}

\subsection{Local well-posedness on the torus on an intermediate time-scale}

In the following we solve the quintic NLS on the rescaled torus
\begin{equation}
\label{eq:RescaledNLS1d}
\left\{ \begin{array}{cl}
i \partial_t u + \partial_{xx} u &= \pm |u|^4 u , \quad (t,x) \in \R \times \T_\lambda, \\
u(0) &= u_0 \in H^s(\T_\lambda).
\end{array} \right.
\end{equation}

With the quintic NLS on the unit torus being locally well-posed in $H^s(\T)$ for times $T \lesssim 1$, the quintic NLS on the rescaled torus is expected to be locally well-posed up to times $\lambda^2$. We solve the Cauchy problem in the following on the time-scale $\lambda / N$. Recall $\lambda \sim N^{\frac{1-s}{s}+\varepsilon}$. Thus, for $s<\frac{1}{2}$, we have $\lambda \gg N$, choosing $\varepsilon > 0$ small enough.

With the $L^6$-Strichartz estimates in adapted function spaces at hand, we can show the following auxiliary well-posedness result:
\begin{proposition}
\label{prop:AuxiliaryWellposedness1d}
For $0<s<\frac{1}{2}$, the Cauchy problem \eqref{eq:RescaledNLS1d} is locally well-posed in $Y^s$ on $[0,T] \times \T_\lambda$ for $T \lesssim \frac{\lambda}{N}$ provided that $\| u_0 \|_{H^s(\T_\lambda)} \ll 1$. The solution satisfies
\begin{equation*}
\| u \|_{Y_T^s} \lesssim \| u_0 \|_{H^s} + \| u \|_{Y_T^s}^5.
\end{equation*}
\end{proposition}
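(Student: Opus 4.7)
My plan is to run a Picard iteration in $Y^s_T$ on the Duhamel reformulation of \eqref{eq:RescaledNLS1d}. The linear contribution $\|e^{it\partial_x^2} u_0\|_{Y^s_T} \leq \|u_0\|_{H^s(\T_\lambda)}$ is immediate from the definition of $Y^s$, so the heart of the matter is the quintilinear estimate
$$\bigl\| \int_0^t e^{i(t-t')\partial_x^2}(|u|^4 u)(t') \, dt' \bigr\|_{Y^s_T} \lesssim \|u\|_{Y^s_T}^5,$$
together with its Lipschitz analogue for the difference of two inputs. By the duality identity \eqref{eq:DualityYs} of Proposition \ref{prop:PropertiesYs}, this reduces to bounding the six-linear form
$$\bigl| \int_0^T \!\! \int_{\T_\lambda} \bar v \cdot |u|^4 u \, dx \, dt \bigr| \lesssim \|v\|_{Y^{-s}_T} \|u\|_{Y^s_T}^5,$$
uniformly over $v$ in the unit ball of $Y^{-s}_T$.

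To prove this I will Littlewood--Paley decompose each of the six factors, writing $N_0$ for the dyadic scale of $v$ and reordering the five copies of $u$ so that $N_1 \geq N_2 \geq \ldots \geq N_5$. The Fourier support constraint $-k_0 \pm k_1 \pm \ldots \pm k_5 = 0$ forces $N_0 \lesssim N_1$, together with $N_0 \sim N_1$ or $N_1 \sim N_2$. In the regime where all $N_i \lesssim N$, the no-loss Strichartz estimate \eqref{eq:L6LowFrequenciesV2} allows a six-fold Hölder in $L^6_{t,x}$, producing a kernel $N_0^s N_1^{-s} \prod_{i \geq 2} N_i^{-s}$; Cauchy--Schwarz in $N_2, \ldots, N_5$ and the Schur test on $\{N_0 \lesssim N_1\}$ then close the estimate for any $s>0$. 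When some $N_i \gg N$ the logarithmic loss in \eqref{eq:L6HighFrequenciesV2} precludes this direct approach, and I instead exploit the resonance relation to extract a pair of factors supported in a common window of size $M$ with separation $\gtrsim M$ and apply the bilinear Strichartz estimate \eqref{eq:BilinearStrichartzV2} (or its complex-conjugated variant); e.g.\ in the case $N_0 \sim N_1 \gg N_2$ one pairs $\bar v_{N_0}$ with $\bar u_{N_2}$ on the window $M \sim N_0$, while in the case $N_1 \sim N_2 \gg N_0$ one pairs $u_{N_1}$ with $\bar u_{N_2}$. This delivers a smoothing factor $(\tfrac{1}{M} + \tfrac{1}{N})^{1/2} \lesssim N^{-1/2}$; the three remaining $u$-factors are placed in $L^6_{t,x}$ via \eqref{eq:L6LowFrequenciesV2} or \eqref{eq:L6HighFrequenciesV2}, and the smallest-frequency factor is bounded in $L^\infty_{t,x}$ using Bernstein together with the embedding $Y^0_T \hookrightarrow L^\infty_t L^2_x$.

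The principal technical obstacle lies in the case analysis that realises the bilinear pairing: in each admissible dyadic configuration one has to exhibit two factors meeting the window-plus-separation hypothesis of \eqref{eq:BilinearStrichartzV2}, which is standard but requires exploiting conjugation (interchanging $|k_i - k_j|$ for $|k_i + k_j|$) as dictated by the resonance relation. The $N^{-1/2}$-gain then absorbs both the logarithmic factors (through $\log^c M \lesssim_\varepsilon M^\varepsilon$ with $\varepsilon \ll s$) and the $N_j^{1/2}$ Bernstein loss on the smallest frequency, uniformly in $\lambda$. The same multilinear scheme yields the difference bound $\||u_1|^4 u_1 - |u_2|^4 u_2\|_{\text{dual}} \lesssim (\|u_1\|_{Y^s_T}^4 + \|u_2\|_{Y^s_T}^4)\|u_1 - u_2\|_{Y^s_T}$, so the contraction principle under $\|u_0\|_{H^s(\T_\lambda)} \ll 1$ delivers a unique fixed point with $\|u\|_{Y^s_T} \lesssim \|u_0\|_{H^s(\T_\lambda)}$, as claimed. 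Crucially, since no step in the argument introduces more than $\log^{O(1)} \lambda$ factors, the resulting bound is uniform in the rescaling parameter $\lambda$, as required for the $I$-method application in the sequel.
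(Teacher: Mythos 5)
Your overall architecture (duality via \eqref{eq:DualityYs}, Littlewood--Paley decomposition, six-fold H\"older in $L^6_{t,x}$ with the no-loss estimate \eqref{eq:L6LowFrequenciesV2} when all frequencies are $\lesssim N$) matches the paper up to the point where some frequency exceeds $N$. There your proposal diverges, and it has a genuine gap. Your premise --- that the logarithmic loss in \eqref{eq:L6HighFrequenciesV2} ``precludes'' the direct $L^6$ approach --- is false, and the bilinear substitute you propose in its place does not cover all configurations. Concretely: the convolution constraint $-k_0+k_1+k_2+k_3-k_4-k_5=0$ is satisfied by the fully coherent configuration $k_0=k_1=\dots=k_5=K$ with $K\gg N$. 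In that configuration no pair of factors satisfies the separation hypothesis of Proposition \ref{prop:BilinearStrichartzV2}, and conjugation does not rescue you: unwinding the remark after that proposition, the condition $|\ell_1+\ell_2|\gtrsim M$ applied to $P_{I_1}u\cdot\overline{P_{I_2}w}$ (with $\overline{P_{I_2}w}$ supported in $-I_2$ and measured in $Y^0_-$) again reduces to separation $|k_1-k_2|\gtrsim M$ of the \emph{underlying} frequency supports, exactly as in the unconjugated case. So ``interchanging $|k_i-k_j|$ for $|k_i+k_j|$'' buys you nothing here, and your scheme produces no estimate at all for this block. Moreover, even where a separated pair exists, your H\"older bookkeeping (one bilinear $L^2$, three $L^6$, one $L^\infty$ via Bernstein) does not close uniformly: the Bernstein cost $N_5^{1/2}$ against the single gain $N^{-1/2}$ leaves a factor $N_5^{1/2-s}N^{-1/2}$, which is unbounded when $N\ll N_5$ and $s$ is small; the weights $N_i^{-s}$ with $s>0$ arbitrarily small cannot absorb a half-power.

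The fix is the paper's own argument, which you discarded: in the high-frequency regime one simply runs the same six-fold $L^6$ H\"older using \eqref{eq:L6HighFrequenciesV2}, accepting the loss $\log(\lambda N_2)^{18}$. Since $\lambda\sim N^{\frac{1-s}{s}+\varepsilon}\lesssim N_2^{\frac{1-s}{s}+\varepsilon}$ in this regime, this loss is $O\big((\tfrac1s\log N_2)^{18}\big)\lesssim_\delta N_2^{\delta}$, which is absorbed by the polynomial gain $N_2^{-s}$ available from the $Y^s$ weights (this is also where the uniformity in $\lambda$ that you assert actually comes from --- the logarithms in $\lambda$ are traded for powers of the second-largest frequency, not merely ``absorbed by $N^{-1/2}$''). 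The six-fold $L^6$ is exactly scaling-critical for the quintic nonlinearity, so no bilinear improvement is needed at this stage; the bilinear estimates are reserved for the energy-increment bounds in Section \ref{section:EnergyGrowthBounds1d}, where a quantitative power gain in $N$ is genuinely required.
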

\begin{proof}
We use the property \eqref{eq:DualityYs} from Proposition \ref{prop:PropertiesYs} and Littlewood-Paley theory:
\begin{equation*}
\big\| \chi_{[0,T)} \int_0^t e^{i(t-t') \Delta} (|u|^4 u) (t') dt' \big\|_{Y^s} \lesssim \sup_{v} \big| \int_0^T \int_{\T_\lambda} \sum_{M \geq 1} P_M (|u|^4 u) \overline{P_M v} dx dt \big|.
\end{equation*}
We use an additional Littlewood-Paley decomposition of the factors to write
\begin{equation*}
\begin{split}
&\quad \sum_{M \geq 1} \iint_{[0,T] \times \T_\lambda} P_M (|u|^4 u) \overline{P_M v} dx dt \\ &= \sum_{M \geq 1} \sum_{N_1 \gtrsim N_2 \ldots \gtrsim N_5} \iint_{[0,T] \times \T_\lambda} P_M v P_{N_1} u_1 \ldots P_{N_5} u_5 dx dt.
\end{split}
\end{equation*}
We omit the notation of complex conjugates above and in the following because the estimates do not depend on these.
We use Littlewood-Paley dichotomy to argue
\begin{equation*}
\begin{split}
&\quad \sum_{M \geq 1} \sum_{N_1 \gtrsim N_2 \ldots \gtrsim N_5} \iint_{[0,T] \times \T_\lambda} P_M v P_{N_1} u_1 \ldots P_{N_5} u_5 dx dt \\
 &= \sum_{M \sim N_1 \gtrsim N_2 \ldots \gtrsim N_5} \iint_{[0,T] \times \T_\lambda} P_M v P_{N_1} u_1 \ldots P_{N_5} u_5 dx dt \\
 &\quad + \sum_{M \ll N_1 \sim N_2 \gtrsim N_3 \ldots \gtrsim N_5} \iint P_M v P_{N_1} u_1 \ldots P_{N_5} u_5 dx dt \\
&= (I) + (II).
\end{split}
\end{equation*}

We turn to the estimate of $(I)$. We use almost orthogonality and decompose $P_M v$ and $P_{N_1} u_1$ into frequency intervals of size $N_2$:
\begin{equation*}
\begin{split}
&\quad \iint_{[0,T] \times \T_\lambda} P_M v P_{N_1} u_1 \ldots P_{N_5} u_5 dx dt \\ &= \sum_{I_1,I_2} \iint_{[0,T] \times \T_\lambda} P_{M,I_1} v P_{N,I_2} u_1 P_{N_2} u_2 \ldots P_{N_5} u_5 dx dt.
\end{split}
\end{equation*}
After estimating the above expression, the sum over $I_1$ and $I_2$ can be removed by an application of the Cauchy-Schwarz inequality and property \eqref{eq:DisjointYsProperty} of the $Y^s$-spaces.

\medskip

\textbf{Case }$N_2 \lesssim N$. We use the $L^6$-estimate from \eqref{eq:L6LowFrequenciesV2} to find
\begin{equation*}
\begin{split}
&\quad \iint_{[0,T] \times \T_\lambda} P_{M,I_1} v P_{N_1,I_2} u_1 \ldots P_{N_5} u_5 dx dt \\
&\lesssim \| P_{M,I_1} v \|_{L^6_{t,x}} \| P_{N_1,I_2} u_1 \|_{L^6_{t,x}} \ldots \| P_{N_5} u_5 \|_{L^6_{t,x}} \\
&\lesssim \| P_{M,I_1} v \|_{Y_T^0} \| P_{N_1,I_2} u_1 \|_{Y_T^0} \ldots \| P_{N_5} u_5 \|_{Y_T^0}.
\end{split}
\end{equation*}
Now we can use almost orthogonality to sum over $I_1$ and $I_2$.

\medskip

\textbf{Case }$N_2 \gtrsim N$. In this case we can still use the $L^6_{t,x}$-estimates from \eqref{eq:L6HighFrequenciesV2}. We conclude
\begin{equation*}
\lesssim \log(\lambda N_2)^{18} \| P_{M,I_1} v \|_{Y_T^0} \| P_{N_1} u_1 \|_{Y_T^0} \ldots \| P_{N_5} u_5 \|_{Y_T^0}.
\end{equation*}
Recalling that $\lambda \sim N^{\frac{1-s}{s}+\varepsilon} \lesssim N_2^{\frac{1-s}{s}+\varepsilon}$, we have
\begin{equation*}
\lesssim \big( \frac{1}{s} \log(N_2)\big)^{18} \| P_{M,I_1} v \|_{Y_T^0} \| P_{N_1,I_2} u_1 \|_{Y_T^0} \| P_{N_2} u_2 \|_{Y_T^0} \ldots \| P_{N_5} u_5 \|_{Y_T^0}.
\end{equation*}
In this case we have easy summation to $\lesssim N^{-\delta} \| u \|^5_{Y_T^s}$.

We turn to the estimate of $(II)$: Let $N^* = \max(M,N_3)$. We can again use an almost orthogonal decomposition for $u_{N_1}$ and $u_{N_2}$ to frequency intervals of size $N^*$. Again the $L^6_{t,x}$-estimates will be the cornerstone of the analysis:
\begin{equation*}
\begin{split}
&\quad \iint \sum_{M \ll N_1 \sim N_2 \gtrsim N_3 \gtrsim \ldots \gtrsim N_5 = (*)} P_M v P_{N_1,I_1} u_1 \ldots P_{N_5} u_5 \\
&\leq \sum_{(*)} \| P_M v \|_{L^6_{t,x}} \| P_{N_1,I_1} u_1 \|_{L^6_{t,x}} \| P_{N_2,I_2} u_2 \|_{L^6_{t,x}} \ldots \| P_{N_5} u_5 \|_{L^6_{t,x}}. 
\end{split}
\end{equation*}
If $N^* \lesssim N$, then we can use the estimate \eqref{eq:L6LowFrequenciesV2} to find
\begin{equation*}
\lesssim \sum_{(*)} \| P_M v \|_{Y_T^0} \| P_{N_1,I_1} u_1 \|_{Y_T^0} \ldots \| P_{N_5} u_5 \|_{Y_T^0}.
\end{equation*}
For $M \gtrsim N^\varepsilon$ we find from straight-forward summation the estimate
\begin{equation*}
\lesssim N^{-\delta} \| u \|^5_{Y_T^s}.
\end{equation*}
If $N^* \gg N$, then we can use the $L^6_{t,x}$-estimate \eqref{eq:L6HighFrequenciesV2} to close the estimate.
\end{proof}

To carry out the $I$-method, we introduce notations: Let $m(\xi)$ be a smooth non-negative symbol on $\R^d$, which equals $1$ for $|\xi| \leq 1$ and equals $|\xi|^{-1}$ for $|\xi| \geq 2$. For any $N \geq 1$ and $\alpha \in \R$, let $I_N^\alpha$ be defined by
\begin{equation*}
( I_N^\alpha f ) \widehat (\xi) = m \big( \frac{\xi}{N} \big)^{\alpha} \hat{f}(\xi).
\end{equation*}

We have the following invariance lemma \cite[Lemma~12.1]{CollianderKeelStaffilaniTakaokaTao2004}:
\begin{lemma}
\label{lem:TransferLemma}
Let $\alpha_0 > 0$ and $n \geq 1$. Suppose that $Z$, $X_1$,$\ldots$,$X_n$ are translation invariant Banach spaces and $T$ is a translation invariant $n$-linear operator such that we have the estimate
\begin{equation*}
\| I_1^\alpha T(u_1,\ldots,u_n) \|_{Z} \lesssim \prod_{i=1}^n \| I_1^\alpha u_i \|_{X_i}
\end{equation*}
for all $u_1$,$\ldots$,$u_n$ and all $0 \leq \alpha \leq \alpha_0$. Then the following estimate holds:
\begin{equation*}
\| I_N^\alpha T(u_1,\ldots,u_n) \|_Z \lesssim \prod_{i=1}^n \| I_N^\alpha u_i \|_{X_i}
\end{equation*}
with implicit constant independent of $\alpha$.
\end{lemma}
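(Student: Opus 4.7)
The plan is a kernel-transfer argument that exploits the relation between $I_N^\alpha$ and $I_1^\alpha$ at the level of Fourier multipliers: the symbol $m(\xi/N)^\alpha$ is obtained from $m(\xi)^\alpha$ by the rescaling $\xi \mapsto \xi/N$, which on the physical side corresponds to dilating the convolution kernel by $1/N$. Since dilation preserves total variation of a measure, and since the ambient spaces $Z, X_1, \ldots, X_n$ and the operator $T$ are all translation invariant, the relevant operator bounds should transfer from scale $1$ to scale $N$ without any change of constant.

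Concretely, I would first establish that $m^\alpha$ lies in the Wiener algebra $A(\R^d)$ uniformly in $\alpha \in [0,\alpha_0]$, i.e.\ there exist finite complex Borel measures $\nu_\alpha$ on $\R^d$ with
\[
m(\eta)^\alpha = \int_{\R^d} e^{is\cdot\eta}\, d\nu_\alpha(s), \qquad \sup_{0 \le \alpha \le \alpha_0} \|\nu_\alpha\|_{M(\R^d)} \le C.
\]
Since $m$ is smooth, equals $1$ on $\{|\xi|\le 1\}$ and equals $|\xi|^{-1}$ on $\{|\xi|\ge 2\}$, this can be obtained from a dyadic Littlewood--Paley decomposition of $m^\alpha$ together with a Mikhlin-type estimate on each shell, the geometric summation being controlled by $\alpha \le \alpha_0$. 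Rescaling $\eta = \xi/N$ then yields
\[
m(\xi/N)^\alpha = \int_{\R^d} e^{i(s/N)\cdot \xi}\, d\nu_\alpha(s),
\]
which on the physical side says
\[
I_N^\alpha f = \int_{\R^d} \tau_{-s/N} f \, d\nu_\alpha(s), \qquad \tau_a f(x) := f(x-a),
\]
with underlying kernel of total variation $\|\nu_\alpha\|_M$ independent of $N$.

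The transfer step is then formal: the $n$-linear hypothesis is equivalent to the statement that the weighted symbol $\sigma_T(\xi_1,\ldots,\xi_n)\, m(\sum_i \xi_i)^\alpha \prod_i m(\xi_i)^{-\alpha}$ defines a bounded multilinear map $X_1 \times \cdots \times X_n \to Z$; the $I_N^\alpha$-weighted version is precisely the same symbol evaluated with each $\xi_i$ replaced by $\xi_i/N$. Using the Wiener-algebra representation of Step~1 together with translation invariance of $T$, $Z$ and the $X_i$, one unfolds the $I_N^\alpha$-weighted estimate as an integral of the $I_1^\alpha$ estimate over the measure $\nu_\alpha$ with translation step scaled by $1/N$, and Minkowski's inequality closes the argument with a constant depending only on $\sup_\alpha \|\nu_\alpha\|_M$ and on the hypothesis, hence independent of both $N$ and $\alpha$. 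The main obstacle is precisely the uniform Wiener-algebra bound in Step~1: at $\alpha = 0$ this is trivial with $\nu_0 = \delta_0$, but for $\alpha > 0$ the slow decay $|\eta|^{-\alpha}$ rules out a naive $L^1$-inversion of $m^\alpha$, so one has to sum the contributions from each dyadic shell carefully, using the smoothness of $m$ to compensate for the absence of integrability in $A(\R^d)$.
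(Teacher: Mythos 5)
First, note that the paper does not prove this lemma: it is quoted verbatim from Colliander--Keel--Staffilani--Takaoka--Tao \cite[Lemma~12.1]{CollianderKeelStaffilaniTakaokaTao2004}, so there is no in-paper argument to compare against; your proposal has to stand on its own. It does not: the ``formal'' transfer step is where the proof actually lives, and as described it fails. Passing to $v_i = I_N^\alpha u_i$, the conclusion is the boundedness of the multilinear operator with symbol $\sigma_T(\xi_1,\dots,\xi_n)\,W_\alpha(\xi_1/N,\dots,\xi_n/N)$, where $W_\alpha(\eta_1,\dots,\eta_n) = m(\eta_1+\dots+\eta_n)^\alpha \prod_i m(\eta_i)^{-\alpha}$ is the correction factor (your phrasing that the \emph{whole} weighted symbol is rescaled is already inaccurate, since $\sigma_T$ is not rescaled). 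The factor $W_\alpha$ is \emph{unbounded} (take $n=2$, $\eta_1=-\eta_2=R$: $W_\alpha = R^{2\alpha}\to\infty$), hence it is not the Fourier transform of a finite measure, and neither are the inverse weights $m(\xi/N)^{-\alpha}=\max(1,|\xi|/N)^{\alpha}$ up to smoothing. So there is no representation of the rescaled correction as an average of translations, and Minkowski's inequality has nothing to act on: you can average translations to bound $I_N^\alpha$ \emph{from above} on the left-hand side, but you cannot strip $I_N^\alpha$ off the right-hand side, because $I_N^{-\alpha}$ is not bounded on a general translation-invariant space uniformly in $N$. A telltale sign that an idea is missing is that your argument uses the hypothesis only at the single exponent $\alpha$ (or at $\alpha=0$), whereas the lemma assumes it for \emph{all} $0\le\alpha\le\alpha_0$; the actual proof in \cite{CollianderKeelStaffilaniTakaokaTao2004} exploits this by writing $m(\xi/N)^\alpha\approx\min(1,N^\alpha m(\xi)^\alpha)$ and representing $\min(1,\lambda)$ as a Mellin-type superposition $\int_{\R} c(t)\,\lambda^{\frac12+it}\,dt$, i.e.\ a superposition over the \emph{order} of $I_1^{z}$ with $\operatorname{Re} z$ ranging over $[0,\alpha]$, not over translations.

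A secondary issue: your Step~1 is a true statement, but the dyadic argument you sketch does not give uniformity in $\alpha$. Estimating each shell separately yields $\| (\phi_j m^\alpha)^{\vee}\|_{L^1}\lesssim 2^{-j\alpha}$, and $\sum_{j\ge1}2^{-j\alpha}\sim \alpha^{-1}$ blows up as $\alpha\to0^+$ --- the danger is at the lower endpoint, not at $\alpha_0$. The uniform bound does hold, but one should instead factor $m^\alpha = \langle\cdot\rangle^{-\alpha}\cdot\big(1+h_\alpha\big)$ with $\|h_\alpha\|_{A}=O(\alpha)$ and use that the Bessel kernel $(\langle\cdot\rangle^{-\alpha})^{\vee}$ is nonnegative with total mass $1$. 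This part is repairable; the transfer step is not, without importing the CKSTT superposition-in-$\alpha$ idea.
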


Proposition \ref{prop:AuxiliaryWellposedness1d} and Lemma \ref{lem:TransferLemma} yield the following result on the $I$-Cauchy problem: 
\begin{proposition}
\label{prop:LWPICauchy}
Let $0<s<1$ and $0<T \sim \lambda/N$. Let $I$ be a smooth radially decreasing interpolant of
\begin{equation*}
(I f ) \widehat (\xi) = 
\begin{cases}
\hat{f}(\xi), \quad &|\xi| \leq N, \\
\big( \frac{N}{|\xi|} \big)^{1-s} \hat{f}(\xi), \quad &|\xi| > 2N.
\end{cases}
\end{equation*}
Then the derived Cauchy problem
\begin{equation}
\label{eq:ICauchyProblem}
\left\{ \begin{array}{cl}
i \partial_t I u + \partial_{xx} I u &= \pm I (|u|^4 u), \quad (t,x) \in [0,T] \times \T_\lambda,\\
Iu(x,0) &= I u_0 \in H^1(\T_\lambda)
\end{array} \right.
\end{equation}
is locally well-posed in $Y_T^1$ provided that $\| u_0 \|_{H^s(\T_\lambda)} \ll 1$.
\end{proposition}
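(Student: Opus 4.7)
The plan is to combine the pentalinear Duhamel estimate implicit in the proof of Proposition~\ref{prop:AuxiliaryWellposedness1d} with the invariance Lemma~\ref{lem:TransferLemma} to obtain the corresponding $Y^1$-level estimate with $I$ inserted, and then to run a standard Picard iteration for $Iu$ in $Y_T^1$.

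Identifying $I$ with a smooth version of $I_N^{1-s}$, one has $\| I_1^{1-s} f \|_{Y^1} \sim \| f \|_{Y^s}$ because the symbol of $I_1^{1-s}$ is comparable to $\langle \xi \rangle^{-(1-s)}$. The proof of Proposition~\ref{prop:AuxiliaryWellposedness1d}---which proceeds via Littlewood--Paley decomposition, almost orthogonality on frequency tiles of the two highest frequencies, and the $L^6_{t,x}$-estimates of Proposition~\ref{prop:LinearStrichartz}---actually establishes the pentalinear Duhamel estimate
\begin{equation*}
\Bigl\| \chi_{[0,T)} \int_0^t e^{i(t-t')\partial_{xx}} \bigl( u_1 \bar u_2 u_3 \bar u_4 u_5 \bigr)(t')\,dt' \Bigr\|_{Y_T^s} \lesssim \prod_{i=1}^5 \| u_i \|_{Y_T^s}.
\end{equation*}
Rewritten in terms of $I_1^{1-s}$, this is precisely the hypothesis of Lemma~\ref{lem:TransferLemma} with $n=5$, $\alpha = 1-s$, $X_i = Z = Y_T^1$. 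Invoking the transfer lemma gives
\begin{equation*}
\Bigl\| I \chi_{[0,T)} \int_0^t e^{i(t-t')\partial_{xx}} \bigl( u_1 \bar u_2 u_3 \bar u_4 u_5 \bigr)(t')\,dt' \Bigr\|_{Y_T^1} \lesssim \prod_{i=1}^5 \| I u_i \|_{Y_T^1},
\end{equation*}
together with the analogous multilinear-difference estimate.

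Local well-posedness of \eqref{eq:ICauchyProblem} in $Y_T^1$ now follows by the contraction mapping principle applied to the Duhamel form of \eqref{eq:ICauchyProblem} for $v = Iu$ on a ball $B_R \subset Y_T^1$, using the above pentalinear bound for the nonlinear part and the linear bound $\| e^{it\partial_{xx}} Iu_0 \|_{Y_T^1} \lesssim \| Iu_0 \|_{H^1(\T_\lambda)}$ for the inhomogeneous term. The smallness of $\|Iu_0\|_{H^1(\T_\lambda)}$ required to close the contraction is supplied by the hypothesis $\| u_0 \|_{H^s(\T_\lambda)} \ll 1$ together with the rescaling relation $\lambda = N^{(1-s)/s}$ recalled in the introduction. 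The main step that deserves care is the extraction of the pentalinear estimate in precisely the symmetric form required by Lemma~\ref{lem:TransferLemma} from the (asymmetrically organized) argument of Proposition~\ref{prop:AuxiliaryWellposedness1d}; once that is done the transfer and the subsequent contraction are entirely routine.
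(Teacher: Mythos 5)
Your proposal matches the paper's intended argument: the paper gives no separate proof of this proposition, stating only that Proposition \ref{prop:AuxiliaryWellposedness1d} and Lemma \ref{lem:TransferLemma} yield it, which is exactly the combination (pentalinear $Y^s$-estimate, transfer via the invariance lemma to the $I$-operator at the $H^1$ level, then contraction mapping) that you carry out. Your added care about extracting the estimate in the symmetric form required by the transfer lemma is a reasonable fleshing-out of the same route, not a deviation from it.
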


\subsection{Auxiliary local well-posedness in two dimensions}

Next, we show a local well-posedness result on the unit time scale in two dimensions.
\begin{proposition}
\label{prop:AuxiliaryLWP2d}
Let $0 <s < \frac{2}{3}$ and $\lambda = N^{\frac{1-s}{s}}$. Then the $I$-system 
\begin{equation*}
\left\{ \begin{array}{cl}
i \partial_t I u + \Delta I u &= \pm I (|u|^2 u), \quad (t,x) \in \R \times \T^2_\lambda, \\
I u(0) &= I u_0 \in H^1(\T^2_\lambda)
\end{array} \right.
\end{equation*}
is locally well-posed on a time-scale $T \sim \lambda^{-\delta}$ for any $\delta > 0$. This means for any $C>0$ and $\| I u_0 \|_{H^1(\T^2_\lambda)} \leq C$ there exists a solution $I u \in Y^1_T$ which satisfies
\begin{equation*}
\| I u \|_{Y^1_T} \leq \bar{C}(C,\delta)
\end{equation*}
independently of $N \gg 1$ and which depends Lipschitz-continuously on the initial data.
\end{proposition}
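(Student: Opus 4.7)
The proof is a standard contraction mapping argument for the Duhamel operator in the closed ball of radius $\bar C = \bar C(C,\delta)$ in $Y^1_T$. By the dual characterization \eqref{eq:DualityYs}, it reduces to proving the trilinear estimate
\begin{equation*}
\left|\iint_{[0,T] \times \T^2_\lambda} \bar v \, u_1 u_2 u_3 \, dx\, dt\right| \lesssim T^{\delta'} \|v\|_{Y^{-1}} \prod_{j=1}^{3}\|u_j\|_{Y^1_T}
\end{equation*}
for some $\delta' > 0$, together with the uniform family of $I_1^\alpha$-weighted variants required by Lemma \ref{lem:TransferLemma}. Once the un-$I$'d family is in hand, Lemma \ref{lem:TransferLemma} upgrades it to the $I$-version with $\|Iu_j\|_{Y^1_T}$ on the right, which is exactly what is needed to close the fixed-point argument. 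Because the nonlinearity is cubic and the small factor $T^{\delta'}$ arises from the restriction $T \sim \lambda^{-\delta}$, the contraction accommodates arbitrary $H^1$-data, explaining why the conclusion is stated for every $C > 0$.

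To prove the trilinear bound, I would Littlewood-Paley decompose $v = \sum_M P_M v$ and $u_j = \sum_{N_j} P_{N_j} u_j$ with $N_1 \geq N_2 \geq N_3$. By the frequency support constraint only the cases $M \sim N_1 \gtrsim N_2 \gtrsim N_3$ and $M \ll N_1 \sim N_2 \gtrsim N_3$ contribute. In each case, pair the factor at the highest frequency with $P_{N_3} u_3$ and invoke the bilinear Strichartz \eqref{eq:2dBilinearEstimateTransfer} to gain a factor $\bigl(\lambda^{-1} + N_3/\max(M,N_1)\bigr)^{1/2}$; place the remaining two factors in $L^4_{t,x}$ via \eqref{eq:2dLinearEstimateTransfer}. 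The Sobolev weights $\langle N_j\rangle$ encoded in the $Y^1$-norms, combined with the bilinear gain, give absolutely convergent frequency summation provided $\lambda \geq N^{(1-s)/s}$, i.e.\ $s < 2/3$, while the $(\lambda N_j)^{C\varepsilon}$ Strichartz losses are absorbed by the remaining polynomial margin.

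The small time factor $T^{\delta'}$ needed for contraction is extracted by Hölder in time: interpolating the $L^4_{t,x}$-Strichartz with the trivial bound $\|P_N u\|_{L^\infty_t L^2_x} \leq \|u\|_{Y^0}$ (combined with Bernstein in space) yields a factor $T^{\delta'} N_j^{\kappa\delta'}$ with $\kappa > 0$ that can be made arbitrarily small and is absorbed by the Sobolev margin above. Lipschitz dependence on initial data follows from the same trilinear estimate applied to the difference of two solutions. The main obstacle, and the place where the threshold $s < 2/3$ is forced, is the high-low case $M \sim N_1 \gg N_3$, where the bilinear ratio $N_3/N_1$ must be delicately balanced against the derivative deficit on the highest-frequency factor; it is this balance, rather than the Strichartz losses, that fixes the scaling $\lambda \gtrsim N^{1/2}$ and therefore the upper bound on $s$.
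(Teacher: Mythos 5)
Your overall skeleton --- duality via \eqref{eq:DualityYs}, Littlewood--Paley decomposition, the transfer Lemma \ref{lem:TransferLemma} to pass to the $I$-system, and extraction of a small factor from $T\sim\lambda^{-\delta}$ to handle large data --- matches the paper. The gap is in the core dyadic estimate. You pair the highest-frequency factor with $P_{N_3}u_3$ and invoke \eqref{eq:2dBilinearEstimateTransfer}, but that estimate carries a loss $(\lambda N_{\mathrm{high}})^{\varepsilon}$ in the \emph{higher} frequency of the pair. In the regime $M\sim N_1$ and $N_2\sim N_3\sim 1$ the derivative margin supplied by the $Y^{-1}\times (Y^1)^3$ weights is $M^{-1}N_1N_2N_3\sim 1$, while your bound for the block is
\begin{equation*}
(\lambda N_1)^{\varepsilon}\Big(\frac{1}{\lambda}+\frac{N_3}{N_1}\Big)^{\frac12}\lambda^{2\varepsilon}\ \gtrsim\ \lambda^{-\frac12+3\varepsilon}N_1^{\varepsilon},
\end{equation*}
which is unbounded as $N_1\to\infty$ for fixed $\lambda$: once $N_1\gtrsim\lambda N_3$ the bilinear gain saturates at the constant $\lambda^{-1/2}$ and cannot absorb $N_1^{\varepsilon}$, so the diagonal sum over $M\sim N_1$ does not close. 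The H\"older-in-time step only adds further positive powers of frequency and does not help. (Your claim that this case ``forces'' $s<\frac{2}{3}$ is also off: the paper's local estimate loses nothing in $N_1^*$ and yields local well-posedness for arbitrarily small $s>0$; the restriction $s<\frac{2}{3}$ is merely the range in which the proposition is applied, and the global threshold $\frac{3}{5}$ comes from the energy increment, not from the local theory.)

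The paper avoids this by not using the bilinear estimate in the local theory at all. It decomposes the two highest frequency annuli into balls $B$ of size $N_3^*$ by almost orthogonality and uses the Galilean-invariant Strichartz bound $\|P_Bu\|_{L^q_{t,x}}\lesssim_{\varepsilon'}(\lambda N_3^*)^{\varepsilon'}(N_3^*)^{4(\frac14-\frac1q)}\|u\|_{Y^0}$ for $q>4$ close to $4$, followed by H\"older in time on $[0,\lambda^{-\delta}]$ to produce the $\lambda^{0-}$ gain. The point is that every loss is measured in the ball size $N_3^*$ rather than in $N_1^*$, so the resulting estimate \eqref{eq:2dMultilinearEstimate} loses only $(N_3^*)^{\varepsilon}(N_4^*)^{\varepsilon}$ and all frequency sums close. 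To repair your argument you would need to replace the bilinear step in the problematic regime by exactly this ball decomposition (or by a bilinear estimate whose sub-polynomial loss sits on the low frequency), at which point you have reproduced the paper's proof.
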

\begin{proof}
We invoke the invariance lemma \ref{lem:TransferLemma} to reduce to show local well-posedness
\begin{equation*}
\left\{ \begin{array}{cl}
i \partial_t u + \Delta u &= \pm |u|^2 u, \quad (t,x) \in \R \times \T^2_\lambda, \\
u(0) &= u_0 \in H^s(\T^2_\lambda)
\end{array} \right.
\end{equation*}
for $0 < s < 1$. By the property \eqref{eq:DisjointYsProperty} of $Y^s$-spaces and Littlewood-Paley decomposition it suffices to show estimates for ordered dyadic frequency ranges $N_1 \geq N_2 \geq N_3$:
\begin{equation}
\label{eq:2dMultilinearEstimate}
\big| \iint_{[0,T] \times \T^2_\lambda} P_N v P_{N_1} u_1 P_{N_2} u_2 P_{N_3} u_3 dx dt \big| \lesssim_\varepsilon \lambda^{0-} (N_3^*)^\varepsilon (N_4^*)^\varepsilon \| P_N v \|_{Y^0} \prod_{i=1}^3 \| P_{N_i} u_i \|_{Y^0}.
\end{equation}
for $N \sim N_1 \gtrsim N_2 \gtrsim N_3 \gtrsim N_4$ or $N \ll N_1 \sim N_2$. For the proof, decompose the highest and second to highest frequency ranges $N_1^*$, $N_2^*$ into balls of size $N_3^*$ by almost orthogonality. Now, for a function frequency localized in a ball $B= B(x,N_3^*)$ we have the following Strichartz estimates as a consequence of Galilean invariance for $ 4 \leq q \leq \infty$:
\begin{equation*}
\| P_{B} u \|_{L^q_{t,x}([0,T] \times \T^2_\gamma)} \lesssim_{\varepsilon'} (\lambda N_3^*)^{\varepsilon'} (N_3^*)^{4 \big( \frac{1}{4} - \frac{1}{q} \big)} \| u \|_{Y^0}.
\end{equation*}
This follows from interpolating the estimates at $q=4$ and $q=\infty$.

Consequently, an application of H\"older's inequality yields for $0 \leq T \lesssim \lambda^{-\delta}$:
\begin{equation*}
\| P_B u \|_{L^4_{t,x}([0,T] \times \T^2_\gamma)} \lesssim_{\varepsilon'} \lambda^{-\delta \big( \frac{1}{4} - \frac{1}{q} \big)+ \varepsilon'} (N_3^*)^{4 \big( \frac{1}{4} - \frac{1}{q} \big) + \varepsilon'} \| P_B u \|_{Y^0}.
\end{equation*}
For given $\varepsilon > 0$, choose $q > 4$ small enough and $\varepsilon'$ small enough, such that by an application of H\"older's inequality, we find
\begin{equation*}
\big| \iint_{[0,T] \times \T^2_\lambda} P_N v P_{N_1} u_1 P_{N_2} u_2 P_{N_3} u_3 dx dt \big| \lesssim_\varepsilon \lambda^{0-} (N_3^*)^{\varepsilon} \| P_N v \|_{Y^0} \prod_{i=1}^3 \| P_{N_i} u_i \|_{Y^0}.
\end{equation*}

The estimate suffices to show local well-posedness for $s=2 \varepsilon$ by a standard application of the contraction mapping principle. The details are omitted.
\end{proof}

\section{Modified energies}
\label{section:ModifiedEnergies}

\subsection{Modified energies in one dimension}

With the local well-posedness on the time-scale $\lambda/N$ ensured, we next analyze the growth of the modified energy. We use the resonant decomposition due to Li--Wu--Xu \cite{LiWuXu2011} and recall the (necessary) notations. Let $n \in 2 \N$ and we define $n$-multipliers $M_n(k_1,\ldots,k_n)$ on
\begin{equation*}
\Gamma_n = \{ \underline{k} = (k_1,\ldots,k_n) \in \R^n : k_1 + \ldots + k_n = 0 \}.
\end{equation*}
We define
\begin{equation*}
\Lambda_n(M_n;f_1,\ldots,f_n) = \int_{\Gamma_n} M_n(k_1,\ldots,k_n) \prod_{j=1}^n \hat{f}_j(t,k_j) (d\Gamma_n)_\lambda \text{ with } (d \Gamma_n)_\lambda = \prod_{j=1}^{n-1} (dk_j)_\lambda
\end{equation*}
and let $\Lambda_n(M_n) = \Lambda_n(M_n;u,\overline{u},\ldots,u,\overline{u})$. We find by substituting the equation \eqref{eq:MassCriticalNLS} for $d=1$:
\begin{equation*}
\frac{d}{dt} \Lambda_n(M_n) = \Lambda_n(M_n \alpha_n) + i \Lambda_{n+4} \big( \sum_{j=1}^n (-1)^j X_j(M_n) \big),
\end{equation*}
where
\begin{equation*}
\alpha_n = i \sum_{j=1}^n (-1)^j k_j^2; \quad X_j(M_n) = M_n(k_1,\ldots,k_{j-1},k_j+\ldots+k_{j+4},k_{j+5},\ldots,k_{n+4}).
\end{equation*}
We define the $I$-energy by
\begin{equation*}
E_I^1(u(t)) = E(Iu) = \frac{1}{2} \| \partial_x I u \|^2_2 \pm \frac{1}{6} \| Iu(t) \|^6_{L^6} = \Lambda_2(\sigma_2) \pm \Lambda_6(\sigma_6),
\end{equation*}
where
\begin{equation*}
\sigma_2 = - \frac{1}{2} m(k_1) k_1 m(k_2) k_2; \quad \sigma_6 = \frac{1}{6} m(k_1) \ldots m(k_6).
\end{equation*}

We compute by $\alpha_2 = 0$ due to convolution constraint:
\begin{equation}
\label{eq:FirstPartM6}
\frac{d}{dt} \Lambda_2(\sigma_2) = i \Lambda_6 \big( (-1) X_1(\sigma_2) + X_2(\sigma_2) \big) = \frac{i}{6} \sum_{j=1}^6 (-1)^{j+1} m^2(k_j) k_j^2.
\end{equation}
Secondly,
\begin{equation}
\label{eq:MixedNonlinearity}
\frac{d}{dt} \Lambda_6(\sigma_6) = \Lambda_6(\sigma_6 \alpha_6) \pm \underbrace{i \Lambda_{10} \big( \sum_{j=1}^6 (-1)^j X_j(\sigma_6) \big)}_{M^1_{10}}.
\end{equation}
We denote the sum of \eqref{eq:FirstPartM6} and the first term from \eqref{eq:MixedNonlinearity} as $M_6$. In the following we focus on the defocusing case to simplify notations. Since we establish estimates of the modulus of the involved terms, the same estimates work in the focusing case.

The keypoint is to show that the estimates from Li--Wu--Xu hold on the much longer time interval $T \sim \lambda /N$ compared to $T \lesssim 1$. With the energy decay being the same, we can improve on the regularity from Li--Wu--Xu \cite{LiWuXu2011} under small mass assumption.

Let 
\begin{equation*}
\Upsilon = \{ (k_1,\ldots,k_6) \in \Gamma_6 : |k_1^*| \sim |k_2^*| \gtrsim N \}
\end{equation*}
and denote with $\chi_X$ the characteristic function for some set $X$.

We decompose the multiplier $M_6^1$ in resonant $\tilde{A}$ and non-resonant $\tilde{A}^c$ region, which will be defined in Subsection \ref{subsection:ResonantRegion1d}, after using the cancellation by conservation of energy:
\begin{equation*}
\begin{split}
M_6(k_1,\ldots,k_6) &= \frac{i}{6} \sum_{j=1}^6 (-1)^{j+1} m^2(k_j) k_j^2 + \sigma_6 \alpha_6 \\
&= (M_6^1 + M_6^2) (\chi_{\Upsilon} + \chi_{\Upsilon^c}) = (M_6^1 + M_6^2) \chi_{\Upsilon} \\
&= \underbrace{M_6^2 \chi_{\Upsilon} + M_6^1 \chi_{\tilde{A}^c} \chi_{\Upsilon}}_{\tilde{M}_6} + \underbrace{M_6^1 \chi_{\tilde{A}} \chi_{\Upsilon}}_{\overline{M}_6}.
\end{split}
\end{equation*}
For $\tilde{M}_6$ we have a favorable estimate after integration by parts. But this cannot be carried out for $\overline{M}_6$. We define the modified $I$-energy $E_I^2(u(t))$ by
\begin{equation*}
E_I^2(u(t)) = E_I^1(u(t)) + \Lambda_6(\tilde{\sigma}_6), \quad \tilde{\sigma}_6 = - \tilde{M}_6 / \alpha_6.
\end{equation*}
Now it holds
\begin{equation*}
\frac{d}{dt} E_I^2(u(t)) = \Lambda_6(\overline{M}_6) + \Lambda_{10}(\overline{M}_{10}),
\end{equation*}
where
\begin{equation*}
\overline{M}_{10} = i \sum_{j=1}^6 (-1)^j (X_j(\sigma_6) + X_j(\tilde{\sigma}_6)).
\end{equation*}
Below we use the following formula:
\begin{equation}
\label{eq:EnergyIdentity}
E_I^1(u(t)) = E_I^1(u(0)) - \big[ \Lambda_6(\tilde{\sigma}_6) \big]_0^t + \int_0^t \Lambda_6(\overline{M}_6) + \Lambda_{10}(\overline{M}_{10}) ds.
\end{equation}

In the following we focus on the proof of the following estimates for $0<t \lesssim \frac{\lambda}{N}$ and $\frac{1}{3} < s< \frac{1}{2}$:
\begin{align}
\label{eq:EnergyEstimateResonant}
\big| \int_0^t \int \overline{M}_6(k_1,\ldots,k_6) \hat{u}(s,k_1) \ldots \hat{\overline{u}}(s,k_6) d\Gamma_6 ds \big| &\lesssim N^{-3+} \| I u \|^6_{Y_{T}^1}, \\
\label{eq:EnergyEstimateNonresonant}
\big| \int_0^t \int \overline{M}_{10}(k_1,\ldots,k_{10}) \hat{u}(s,k_1) \ldots \hat{\overline{u}}(s,k_{10}) d\Gamma_{10} ds \big| &\lesssim N^{-3+} \| I u \|^{10}_{Y_{T}^1}, \\
\label{eq:BoundaryTermEstimate}
\big| \Lambda_6(\tilde{\sigma}_6)(t) \big| &\lesssim N^{-\delta} \| I u(t) \|^6_{H^1(\T_\lambda)}.
\end{align}

The first and second estimate translate to favorable energy estimates on the extended time-interval of length $\frac{\lambda}{N}$. Since $|\sigma_6|, |\tilde{\sigma}_6| \lesssim 1$, the proof of the second and third estimate is more straight-forward. The bulk of the work is to estimate the resonant part. Moreover, we need to establish size and regularity estimates of the multilinear Fourier multipliers. The size estimates are essentially due to Li--Wu--Xu \cite{LiWuXu2011}. However, the regularity was not analyzed in \cite{LiWuXu2011}, which will be done through Fourier series argument in the forthcoming sections. For this reason also the analysis of the size is repeated.

\subsection{Modified energies in two dimensions}

We turn to the analysis in two dimensions.
The energy is given by
\begin{equation*}
E^1(u)(t) = \int_{\T^2_\lambda} \frac{|\nabla_x u|^2}{2} dx \pm \int_{\T^2_\lambda} \frac{|u|^4}{4} dx,
\end{equation*}
and we decompose the $I$-energy $E_I = E^1(I u) = \Lambda_2(\sigma_2) \pm \Lambda_4(\sigma_4) $ with
\begin{equation*}
\sigma_2 = - \frac{1}{2} k_1 m(k_1) \cdot k_2 m(k_2), \quad \sigma_4 = \frac{1}{4} m(k_1) \ldots m(k_4).
\end{equation*}
Then we compute from substituting \eqref{eq:MassCriticalNLS} and symmetrization
\begin{equation*}
\frac{d}{dt} \Lambda_2(\sigma_2) = \Lambda_4(M_4), \quad M_4 = \frac{i}{4} (-k_1^2 m^2(k_1) + k_2^2 m^2(k_2) - k_3^2 m^2(k_3) + k_4^2 m^2(k_4)).
\end{equation*}
Secondly,
\begin{equation*}
\frac{d}{dt} \Lambda_4(\sigma_4) = \frac{i}{4} m(k_1) \ldots m(k_4) (-k_1^2 + k_2^2 - k_3^2 + k_4^2) \pm i \sum_{j=1}^4 X_j(\sigma_4).
\end{equation*}
We carry out a very simple resonant decomposition, using a correction term for the first term of the above display and some part of $M_4$. The aim is to remain with the estimate $|\overline{M}_4| \lesssim m(N_1^*) N_1^* m(N_3^*) N_3^*$ for the remainder term $\overline{M}_4$ of $M_4$.

 To this end, we suppose that $|k_i| \sim N_i$. If $N_2, N_4 \ll N_1 \sim N_3$ or $N_1,N_3 \ll N_2 \sim N_4$,
 then clearly the resonance function
 \begin{equation*}
 \Omega_4(k_1,k_2,k_3,k_4) = k_1^2 - k_2^2 + k_3^2 - k_4^2, \quad (k_1,\ldots,k_4) \in \Gamma_4
 \end{equation*}
 satisfies the size estimate $|\Omega_4(k_1,\ldots,k_4)| \gtrsim (N_1^*)^2$. We define the relevant region (since the multipliers become trivial in the complement due to energy conservation and convolution constraint):
 \begin{equation*}
 \Upsilon = \{ (k_1,\ldots,k_4) \in \Gamma_4 : \, |k_i| \sim N_i, \; N_1^* \sim N_2^* \gtrsim N \}.
 \end{equation*}
 We define the non-resonant region by
 \begin{equation*}
 \tilde{A} = \{ (k_1,\ldots,k_4) \in \Upsilon : \, |k_1| \sim |k_3| \gg N_3^* \text{ or } |k_2| \sim |k_4| \gg N_3^* \}.
 \end{equation*}
 
With the non-resonant region given by $\tilde{A}$ we define the correction term 
\begin{equation*}
\tilde{\sigma}_4 = \mp \sigma_4 - \underbrace{i \frac{k_1^2 m^2(k_1) - m^2(k_2) k_2^2 + m^2(k_3) k_3^2 - m^2(k_4) k_4^2}{\Omega_4(k_1,\ldots,k_4)} \chi_{\tilde{A}}}_{\sigma_{4,2}}.
\end{equation*}

We define the modified $I$-energy by
\begin{equation*}
E_I^2 = E_I^1 + \Lambda_4(\tilde{\sigma}_4),
\end{equation*} 
and it follows
\begin{equation*}
\frac{d}{dt} E_I^2 = \Lambda_4(\overline{M}_4) + \Lambda_6(\overline{M}_6)
\end{equation*}
with
\begin{equation*}
\overline{M}_4 = M_4 \cdot \chi_{\Upsilon \cap \tilde{A}^c} \quad \text{ and } \quad \overline{M}_6 = \pm i \sum_{j=1}^4 ( X_j(\sigma_4) + X_j(\sigma_{4,2})).
\end{equation*}

We rewrite the formula for $E_I^1$ by the fundamental theorem of calculus as
\begin{equation*}
E_I^1(t) = E_I^1(0) + \int_0^t \Lambda_4(\overline{M}_4) ds + \int_0^t \Lambda_6(\overline{M}_6) ds - [\Lambda_4(\tilde{\sigma}_4)(t) - \Lambda_4(\tilde{\sigma}_4)(0) ].
\end{equation*}
We shall establish the following estimates for $\frac{1}{2} < s \leq \frac{2}{3}$ and $0 \leq T \lesssim \lambda^{-\delta}$:
\begin{align}
\label{eq:2dBoundM4}
\big| \int_0^T \Lambda_4(\overline{M}_4) ds \big| &\lesssim N^{-1+} \lambda^{-\frac{1}{2}}  \| I u \|^4_{Y^1_T}, \\
\label{eq:2dBoundM6}
\big| \int_0^T \Lambda_6(\overline{M}_6) ds \big| &\lesssim N^{-2+} \| I u \|^6_{Y^1_T}, \\
\label{eq:2dBoundaryTerm}
\big| \Lambda_4(\tilde{\sigma}_4)(t) \big| &\lesssim N^{-\delta} \| I u(t) \|^4_{H^1(\T^2_\lambda)}.
\end{align}

The main part is to show estimates for $\Lambda_4(\overline{M}_4)$. The estimates for $\overline{M}_6$ and the boundary term are simpler because the multiplier is bounded.

\section{Energy growth bounds in one dimension}
\label{section:EnergyGrowthBounds1d}
\subsection{Estimate of the resonant contribution}
\label{subsection:ResonantRegion1d}
\subsubsection{Resonant decomposition}
\label{subsubsection:ResonantDecomposition}
In the following we shall estimate the multiplier
\begin{equation*}
M_6(k_1,\ldots,k_6) = \sum_{i=1}^6 (-1)^{i+1} m^2(k_i) k_i^2
\end{equation*}
on $\Gamma_6$ after dyadic localization $|k_i| \sim N_i \in 2^{N_0}$ against the resonance function
\begin{equation*}
\Omega_6(k_1,\ldots,k_6)= \sum_{i=1}^6 (-1)^{i+1} k_i^2.
\end{equation*}
By symmetry we suppose in the following 
\begin{equation}
\label{eq:SymmetryDyadicLocalization}
N_1 \geq N_3 \geq N_5, \, N_2 \geq N_4 \geq N_6, \text{ and } N_1 \geq N_2.
\end{equation}
 By Littlewood-Paley dichotomy we have $N_1^* \sim N_2^*$. In the following we carry out possibly finer decompositions than on the dyadic scale. We shall only consider additional decompositions into intervals with tractable summation.
Consider
\begin{equation*}
A = \{ \underline{k} \in \Gamma_6 : |\Omega_6| \lesssim |M_6| \}.
\end{equation*}
We shall find $\tilde{A} \supseteq A$ with $\tilde{A}$ given as union of products of intervals and let
\begin{equation*}
\overline{M}_6 = M_6 \chi_{\tilde{A}} = \sum_{I_1,\ldots,I_6} M_6 \chi_{I_1}(k_1) \ldots \chi_{I_6}(k_6).
\end{equation*}
Here $\chi_I$ denotes the indicator function for the interval $I$.
The contribution of $\overline{M}_6$ will then be estimated with size and regularity of $M_6$ on $I_1,\ldots,I_6$ (see Proposition \ref{prop:SizeRegularityMBar}) and linear and bilinear Strichartz estimates.

It is easy to see that the following cases are non-resonant:
\begin{lemma}
\label{lem:EasyNonResonance}
Suppose that $|k_i| \sim N_i$, $i=1,\ldots,6$, and \eqref{eq:SymmetryDyadicLocalization} holds. If 
\begin{equation}
\label{eq:NonResonant1}
N_2 \ll N_1,
\end{equation}
or
\begin{equation}
\label{eq:NonResonant2}
N_1^* \sim N_2^* \gtrsim N_3^* \gg N_4^*,
\end{equation}
then we are in the non-resonant case
\begin{equation}
\label{eq:NonResonantEstimate1}
|M_6(k_1,\ldots,k_6)| \lesssim |\Omega_6(k_1,\ldots,k_6)|.
\end{equation}
\end{lemma}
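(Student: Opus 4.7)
The plan is a direct case analysis on the dyadic sizes $N_i$, exploiting the convolution constraint $k_1+\dots+k_6 = 0$ together with the evenness of $f(k) := m^2(k)k^2$. In each case I would bound $|\Omega_6|$ from below and $|M_6|$ from above separately, and then compare.

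For case \eqref{eq:NonResonant1}, I would first observe that $|k_1| \sim N_1$ must be balanced by $k_2+k_3+k_4+k_5+k_6$, and the ordering $N_1 \geq N_3 \geq N_5$, $N_2 \geq N_4 \geq N_6$ bounds this sum by $O(N_2 + N_3)$. With $N_2 \ll N_1$, this forces $N_3 \sim N_1$. The squared contributions $k_1^2$ and $k_3^2$ carry the same sign in $\Omega_6$ and together produce $|\Omega_6| \sim N_1^2$, while $|M_6| \leq \sum m^2(k_i) k_i^2 \lesssim N_1^2$ is immediate from $m \leq 1$.

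For case \eqref{eq:NonResonant2} I would split on the signs of the two frequencies of magnitude $\sim N_1^*$ inside $\Omega_6 = k_1^2-k_2^2+k_3^2-k_4^2+k_5^2-k_6^2$. If these two appear with the same sign (e.g.\ both among $\{k_1,k_3,k_5\}$), their squared contributions add, so $|\Omega_6| \sim (N_1^*)^2$ and the trivial estimate $|M_6| \lesssim (N_1^*)^2$ closes that subcase. The substantive work is the opposite-sign subcase; say the top two are $k_1$ and $k_2$. I would decompose $\Omega_6 = (k_1-k_2)(k_1+k_2) + \Omega_6'$ with $\Omega_6' = k_3^2-k_4^2+k_5^2-k_6^2$. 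The convolution constraint combined with $N_4^* \ll N_3^*$ gives $k_1+k_2 = -k_{i_*} + O(N_4^*)$, where $k_{i_*}$ is the unique ``third-largest'' coordinate of size $\sim N_3^*$; hence $(k_1-k_2)(k_1+k_2)$ has magnitude $\sim N_1^* N_3^*$ while $|\Omega_6'| \lesssim (N_3^*)^2 \ll N_1^* N_3^*$ rules out cancellation, yielding $|\Omega_6| \sim N_1^* N_3^*$. For $M_6$ I would pair the terms, $M_6 = \sum_{j=1}^{3} (f(k_{2j-1}) - f(k_{2j}))$, and use evenness of $f$ to rewrite $f(k_1)-f(k_2) = f(k_1) - f(-k_2)$; since $|k_1-(-k_2)| = |k_1+k_2| \lesssim N_3^*$ and $|f'(\xi)| \lesssim m^2(\xi)|\xi| \leq |\xi|$, the mean value theorem yields $|f(k_1)-f(k_2)| \lesssim N_1^* N_3^*$, while the remaining two pair differences are each $\lesssim (N_3^*)^2$. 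Summing gives $|M_6| \lesssim N_1^* N_3^* \sim |\Omega_6|$.

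The hardest point will be the opposite-sign subcase of case \eqref{eq:NonResonant2}: both $\Omega_6$ and $M_6$ suffer near-cancellation in the top two frequencies, and matching sizes on the two sides requires pinning down $|k_1 + k_2|$ precisely via the convolution constraint, together with evenness of $f$ to enable the mean value theorem. The same mechanism should be reusable for the finer size and regularity bounds on $M_6$ developed in the sequel.
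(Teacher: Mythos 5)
Your case \eqref{eq:NonResonant1} and your upper bound on $|M_6|$ in case \eqref{eq:NonResonant2} (pairing the terms, using evenness of $f(k)=m^2(k)k^2$ and the mean value theorem with $|k_1+k_2|\lesssim N_3^*$) are correct and coincide with the paper's argument. The gap is in your lower bound for $|\Omega_6|$ in case \eqref{eq:NonResonant2}. The hypothesis there is $N_1^*\sim N_2^*\gtrsim N_3^*\gg N_4^*$, so the regime $N_3^*\sim N_1^*$ (three frequencies of comparable top size, the remaining three of size $\lesssim N_4^*$) is included, and in that regime both of your key assertions fail. First, from $|k_1+k_2|\sim N_3^*$ you can only deduce $|k_1-k_2|\sim N_1^*$ when $N_3^*\ll N_1^*$; if $N_3^*\sim N_1^*$ one may have $k_1=k_2$, so your ``main term'' $(k_1-k_2)(k_1+k_2)$ can vanish. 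Second, $|\Omega_6'|\lesssim (N_3^*)^2$ is then \emph{comparable} to $N_1^*N_3^*$, not much smaller, so it cannot be discarded as an error term. The same issue infects your same-parity subcase: if the third-largest frequency has size $\sim N_1^*$ and the opposite parity, ``the squared contributions add'' does not by itself preclude a cancellation of the type $3^2+4^2-5^2=0$.

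The conclusion is nonetheless true, and the repair is to substitute the convolution constraint completely rather than into only one factor. Writing $k_{i_*}$ for the third-largest frequency, the constraint gives $k_2=-k_1-k_{i_*}+O(N_4^*)$, hence $k_2^2=(k_1+k_{i_*})^2+O(N_1^*N_4^*)$ and therefore $\Omega_6=-2k_1k_{i_*}+O(N_1^*N_4^*)$, so that $|\Omega_6|\gtrsim N_1^*N_3^*$ uniformly over $N_3^*\lesssim N_1^*$ because $N_3^*\gg N_4^*$; this is exactly the computation in the paper. In the same-parity subcase the analogous substitution yields $\Omega_6=\pm 2k_ak_b+O(N_1^*N_4^*)$ with $k_a,k_b$ the two top frequencies, whence $|\Omega_6|\gtrsim (N_1^*)^2$. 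With these two identities in place of your dominant-term argument, the rest of your proof goes through.
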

\begin{proof}
In case of \eqref{eq:NonResonant1}, it is immediate that
\begin{equation*}
|\Omega_6(k_1,\ldots,k_6)| = |k_1^2 +k_3^2 + k_5^2 - (k_2^2 + k_4^2 + k_6^2)| \gtrsim N_1^2 - N_2^2 \gtrsim N_1^2.
\end{equation*}
So, when we consider \eqref{eq:NonResonant2}, we suppose that $\{N_1,N_2,N_3\} = \{N_1^*,N_2^*,N_3^*\}$ or $\{N_1^*,N_2^*,N_3^* \} = \{N_1,N_2,N_4\}$.
We obtain from convolution constraint in the first scenario
\begin{equation*}
\begin{split}
\Omega_6(k_1,\ldots,k_6 ) &= k_1^2 - k_2^2 + k_3^2 + O((N_4^*)^2) = (k_1 - k_2) (k_1 + k_2) + k_3^2 + O((N_4^*)^2) \\
&= (k_2-k_1-k_3)k_3 + O(N_1^* N_4^*) \\
 &= -2k_1 k_3 + O(N_1^* N_4^*).
 \end{split}
\end{equation*}
Hence, the lower bound $|\Omega_6(k_1,\ldots,k_6)| \gtrsim N_1^* N_3^*$ is immediate. In the second scenario $\{N_1^*,N_2^*,N_3^* \} = \{N_1,N_2,N_4 \}$ we can argue symmetrically.

If $N_1^* \sim N_3^*$, the size estimate $|M_6| \lesssim N_1^* N_3^*$ is trivial. If $N_1^* \sim N_2^* \gg N_3^*$ we have $|M_6| \lesssim m(N_1^*) N_1^* N_3^*$ by invoking the mean value theorem to estimate $|m^2(k_1) k_1^2 - m^2(k_2) k_2^2| \lesssim m(N_1^*) N_1^* N_3^*$ (see Proposition \ref{prop:SizeRegularityMBar} (i)).
\end{proof}
The following lemma yields our criterion for non-resonance in case the third highest frequency is much lower than the highest frequency.
\begin{lemma}
\label{lem:NonResonant2HighFrequencies}
Suppose that $|k_i| \sim N_i$, $i=1,\ldots,6$, and \eqref{eq:SymmetryDyadicLocalization} holds. Moreover, suppose that $N_1^* \sim N_2^* \gg N_3^* \sim N_4^*$, and $\{N_1^*,N_2^*\} = \{N_1,N_2\}$, and $k_1$ and $k_2$ are of different signs. If $|k_1+k_2| \sim N_{12} \gg \frac{(N_3^*)^2}{N_1^*}$, then we have $|\Omega_6| \gtrsim N_1^* N_{12} \gtrsim |M_6(k_1,\ldots,k_6)|$.
\end{lemma}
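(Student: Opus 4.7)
\emph{Proof proposal.} The plan is to prove the two inequalities $|\Omega_6|\gtrsim N_1^* N_{12}$ and $|M_6|\lesssim N_1^* N_{12}$ separately, exploiting in both steps the consequence of the hypothesis that $k_1$ and $k_2$ are nearly negatives of each other. Indeed, opposite signs together with $|k_1|,|k_2|\sim N_1^*$ and $|k_1+k_2|\sim N_{12}$ force $|k_1-k_2|\sim N_1^*$ and $k_1\approx -k_2$.

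For the resonance function I would factor
\begin{equation*}
\Omega_6 = (k_1-k_2)(k_1+k_2) + (k_3^2-k_4^2+k_5^2-k_6^2).
\end{equation*}
The first summand has size $\sim N_1^* N_{12}$ by the observation above, while the low-frequency tail is trivially $O((N_3^*)^2)$. The standing hypothesis $N_{12}\gg (N_3^*)^2/N_1^*$ is exactly what is needed to guarantee that the first summand dominates, yielding $|\Omega_6|\gtrsim N_1^* N_{12}$.

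For the multiplier $M_6=\sum_{i=1}^6(-1)^{i+1}m^2(k_i)k_i^2$, the four low-frequency terms satisfy $m^2(k_j)k_j^2\lesssim (N_3^*)^2\lesssim N_1^* N_{12}$, using $m\le 1$ and the same hypothesis. The remaining piece $m^2(k_1)k_1^2-m^2(k_2)k_2^2$ is the heart of the matter: writing $f(k):=m^2(k)k^2$, which is even in $k$ since the $I$-symbol is radial, I would rewrite this difference as $f(k_1)-f(-k_2)$. Now $|k_1-(-k_2)|=N_{12}$, and since $N_{12}\lesssim N_3^*\ll N_1^*$, the entire segment joining $k_1$ and $-k_2$ lies in $\{|k|\sim N_1^*\}$. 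A direct computation in both regimes $|k|\lesssim N$ and $|k|\gtrsim N$ gives $|f'(k)|\lesssim m^2(|k|)|k|\lesssim N_1^*$ on this segment (using $N_1^*\gtrsim N$ in $\Upsilon$ for the latter regime), so the mean value theorem yields $|f(k_1)-f(-k_2)|\lesssim N_1^* N_{12}$.

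Combining the two pieces produces $|M_6|\lesssim N_1^* N_{12}\lesssim |\Omega_6|$. The one nontrivial step, and the place where the hypothesis on the signs of $k_1,k_2$ is genuinely used, is the parity trick converting the ``far apart, opposite sign'' pair $(k_1,k_2)$ into the ``close together'' pair $(k_1,-k_2)$; this is what unlocks a mean value estimate and produces the gain $N_{12}/N_1^*$ over the trivial bound $|M_6|\lesssim (N_1^*)^2$. Everything else is bookkeeping.
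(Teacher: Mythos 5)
Your proposal is correct and follows essentially the same route as the paper: the same factorization $\Omega_6=(k_1-k_2)(k_1+k_2)+O((N_3^*)^2)$ for the lower bound, and the same parity/mean-value argument writing $m^2(k_1)k_1^2-m^2(k_2)k_2^2=f(k_1)-f(-k_2)=(k_1+k_2)f'(k_*)$ with $f(\xi)=m^2(\xi)\xi^2$ for the upper bound. No gaps.
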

\begin{proof}
We write
\begin{equation*}
\Omega_6 = (k_1-k_2)(k_1+k_2) +k_3^2 -k_4^2 +k_5^2 -k_6^2 = (k_1-k_2)(k_1+k_2) + O((N_3^*)^2).
\end{equation*}
With $|k_1-k_2| \sim N_1^*$, we find that under our assumption $|k_1+k_2| \sim N_{12} \gg \frac{(N_3^*)^2}{N_1^*}$, we have
\begin{equation*}
|\Omega_6| \sim N_{12} N_1^*.
\end{equation*}
For an upper bound on $M_6$ we write (supposing that $k_1 > - k_2$, the other case is symmetric):
\begin{equation*}
\begin{split}
M_6(k_1,\ldots,k_6) &= m^2(k_1) k_1^2 - m^2(k_2) k_2^2 + m^2(k_3) k_3^2 - \ldots - m^2(k_6) k_6^2 \\
&= \int_{-k_2}^{k_1} f'(\xi) d\xi + O(m^2(N_3^*) (N_3^*)^2) \\
&= (k_1+k_2) f'(k_*) + O(m^2(N_3^*) (N_3^*)^2)
\end{split}
\end{equation*}
with $f(\xi) = m^2(\xi) \xi^2$. This yields the claimed estimate by our assumptions on $|k_1+k_2|$.
\end{proof}
Note that the above lemma amounts to introducing a bilinear Fourier localization to describe the non- resonant set. On the other hand, for a superset of the resonant set, where $|k_1+k_2| \lesssim \frac{(N_3^*)^2}{N_1^*}$, we can localize $k_1$ and $k_2$ to intervals $I_1$ and $I_2$ of size $\frac{(N_3^*)^2}{N_1^*}$ which summation can be carried out via almost orthogonality.

Lastly, we consider the case with four comparable high frequencies and the fifth frequency being significantly lower.
\begin{lemma}
\label{lem:NonResonant4HighFrequencies}
Suppose that $N_1^* \sim N_4^* \gg N_5^*$. Then one of the following holds:
\begin{align}
\label{eq:HighFrequencyDistributionI}
\{N_1^*,\ldots,N_4^* \} &= \{N_1,N_2,N_3,N_4\}, \\
\label{eq:HighFrequencyDistributionII}
\{N_1^*,\ldots,N_4^*\}  &= \{N_1,N_2,N_4,N_6\}, \\
\label{eq:HighFrequencyDistributionIII}
\{N_1^*,\ldots,N_4^* \} &= \{N_1,N_2,N_3,N_5 \}.
\end{align}
\begin{itemize}
\item[(i)] If $k_2,k_4,k_6$ in Case \eqref{eq:HighFrequencyDistributionII} or $k_1, k_3, k_5$ in Case \eqref{eq:HighFrequencyDistributionIII} are of the same sign, then it holds $|\Omega_6| \gtrsim (N_1^*)^2$.
\item[(ii)] Suppose that we are in Case \eqref{eq:HighFrequencyDistributionII} and $k_2, k_4, k_6$ are not of the same sign. If $k_i$, $i \in \{2,4,6\}$ and $k_1$ are of the same sign, and $|k_i-k_1| \ll N_1^*$, then $|\Omega_6| \gtrsim (N_1^*)^2$. If $k_i$ and $k_1$ are of different signs, and $|k_i+k_1| \ll N_1^*$, then $|\Omega_6| \gtrsim (N_1^*)^2$.
\item[(iii)] Suppose that we are in Case \eqref{eq:HighFrequencyDistributionIII} and $k_1$,$k_3$,$k_5$ are not of the same sign. If $k_i$, $i \in \{1,2,3\}$ and $k_2$ are of the same sign and $|k_i-k_2| \ll N_1^*$, then $|\Omega_6| \gtrsim (N_1^*)^2$. If $k_i$ and $k_2$ are of different signs and $|k_i+k_2| \ll N_1^*$, then $|\Omega_6| \gtrsim (N_1^*)^2$.
\end{itemize}

\end{lemma}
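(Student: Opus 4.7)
\emph{Proof plan.} The enumeration \eqref{eq:HighFrequencyDistributionI}--\eqref{eq:HighFrequencyDistributionIII} follows directly from the ordering assumption \eqref{eq:SymmetryDyadicLocalization}. Since $N_1 \geq N_3 \geq N_5$ and $N_2 \geq N_4 \geq N_6$, if $N_5$ lies in the top four then so do $N_1$ and $N_3$, and if $N_6$ lies in the top four then so do $N_2$ and $N_4$; together with $N_1 \geq N_2$ this leaves exactly the three configurations $\{N_1,N_2,N_3,N_4\}$, $\{N_1,N_2,N_4,N_6\}$, and $\{N_1,N_2,N_3,N_5\}$. I would open the proof with this short case check.

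The analytic content is a single algebraic identity per case, obtained by eliminating the missing odd (resp.\ even) large frequency via the convolution constraint $k_1+k_2+k_3+k_4+k_5+k_6=0$. In Case \eqref{eq:HighFrequencyDistributionII} one has $|k_3|,|k_5|\lesssim N_5^*$, so
\begin{equation*}
\Omega_6 = k_1^2 - k_2^2 - k_4^2 - k_6^2 + O((N_5^*)^2),
\end{equation*}
and substituting $k_1 = -(k_2+k_4+k_6) + O(N_5^*)$ rearranges this to
\begin{equation*}
\Omega_6 = 2(k_2 k_4 + k_2 k_6 + k_4 k_6) + O(N_1^* N_5^*).
\end{equation*}
A symmetric computation, eliminating $k_2$, gives in Case \eqref{eq:HighFrequencyDistributionIII}
\begin{equation*}
\Omega_6 = -2(k_1 k_3 + k_1 k_5 + k_3 k_5) + O(N_1^* N_5^*).
\end{equation*}
For claim (i), the same-sign hypothesis makes each of the three cross products in the leading term positive (or all negative) with absolute value $\sim (N_1^*)^2$, and the error $O(N_1^* N_5^*)$ is negligible because $N_5^* \ll N_1^*$; hence $|\Omega_6|\gtrsim (N_1^*)^2$.

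For (ii) and (iii) I would instead retain the first expansion $\Omega_6 = k_1^2 - k_2^2 - k_4^2 - k_6^2 + O((N_5^*)^2)$ (respectively its Case III analogue $\Omega_6 = -k_2^2 + k_1^2 + k_3^2 + k_5^2 + O((N_5^*)^2)$) and exploit the factorization $k_1^2 - k_i^2 = (k_1-k_i)(k_1+k_i)$. In the same-sign subcase the hypothesis $|k_i - k_1| \ll N_1^*$ makes the first factor $o(N_1^*)$ while the second is automatically $O(N_1^*)$; in the opposite-sign subcase the roles of the two factors are swapped by $|k_i + k_1| \ll N_1^*$. Either way $k_1^2 - k_i^2 = o((N_1^*)^2)$, so
\begin{equation*}
\Omega_6 = -k_j^2 - k_\ell^2 + o((N_1^*)^2), \qquad \{j,\ell\} = \{2,4,6\}\setminus\{i\},
\end{equation*}
and both retained squares are $\sim (N_1^*)^2$ and enter with the same sign, yielding $|\Omega_6|\gtrsim (N_1^*)^2$. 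Case \eqref{eq:HighFrequencyDistributionIII} is handled identically with odd and even roles swapped. There is no genuine obstacle in this lemma; the only point requiring care is the uniform treatment of the two sign configurations in (ii), (iii) through the single factorization of $k_1^2-k_i^2$.
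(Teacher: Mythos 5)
Your proposal is correct and follows essentially the same route as the paper: the enumeration of the three configurations from the ordering assumption, the completion-of-the-square identity $\Omega_6 = 2(k_2k_4+k_2k_6+k_4k_6)+O(N_1^*N_5^*)$ via the convolution constraint for (i), and the factorization $k_1^2-k_i^2=(k_1-k_i)(k_1+k_i)$ leaving two like-signed squares of size $(N_1^*)^2$ for (ii) and (iii). No gaps.
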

\begin{proof}
Ad (i): Suppose that we are in Case \eqref{eq:HighFrequencyDistributionI} and $k_2,\,k_4,\,k_6$ are of the same sign. Then we find
\begin{equation}
\label{eq:ResonanceExtension}
\begin{split}
\Omega_6 &= k_1^2 - k_2^2 - k_4^2 - k_6^2 + O((N_5^*)^2) \\
&= (k_2 + k_4 + k_6 + O(N_5^*))^2 - k_2^2 - k_4^2 - k_6^2 + O((N_5^*)^2) \\
&= 2 k_2 k_4 + 2 k_2 k_6 + 2 k_4 k_6 + O (N_1^* N_5^*),
\end{split}
\end{equation}
and hence $|\Omega_6| \gtrsim (N_1^*)^2$. The second statement in (i) follows from the same argument.

Ad (ii) and (iii): Suppose that we are in Case \eqref{eq:HighFrequencyDistributionII}. It holds
\begin{equation*}
\begin{split}
\Omega_6 &= k_1^2 -k_2^2 -k_4^2 - k_6^2 + O((N_5^*)^2) = (k_1-k_2)(k_1+k_2) - k_4^2 -k_6^2 + O((N_5^*)^2) \\
&= (k_1-k_2)(k_1+k_2) - k_4^2 -k_6^2 + O((N_5^*)^2).
\end{split}
\end{equation*}
If $k_1$ and $k_2$ are of the same sign, we have $|k_1+k_2| \sim N_1^*$. If now $|k_1-k_2| \ll N_1^*$, then the lower bound $|\Omega_6| \gtrsim (N_1^*)^2$ is immediate. Similarly, if $k_1$ and $k_2$ are of different signs, we have $|k_1-k_2| \sim N_1^*$. If now $|k_1+k_2| \ll N_1^*$, then again $|\Omega_6| \gtrsim (N_1^*)^2$. The other cases described in (ii) and (iii) follow from obvious modifications of the above argument.
\end{proof}

\subsubsection{Description of the resonant set}

We summarize the regions, which we treat as resonant: Recall that we have dyadically localized $|k_i| \sim N_i$. Write $A_{N_i} = [-2 N_i,- N_i/2] \cup [N_i /2,2N_i]$ for $N_i \in 2^{\N}$ and $A_1 = [-2,2]$.

\medskip

(*): \emph{We have that}
\begin{equation}
\label{eq:ResonantRegion}
N_1 \sim N_2, \; N_1^* \sim N_2^*, \; N_3^* \sim N_4^*
\end{equation}
\emph{and one of the following cases:}
\begin{itemize}
\item[(i)] \emph{if $N_1^* \sim N_2^* \gg N_3^* \sim N_4^*$, then $k_1$ and $k_2$ are of different signs and $|k_1+k_2| \lesssim \frac{(N_3^*)^2}{N_1^*},$ }
\item[(ii)] \emph{ if $N_1^* \sim N_4^* \gg N_5^*$, then it holds one of \eqref{eq:HighFrequencyDistributionI}-\eqref{eq:HighFrequencyDistributionIII}. }

\emph{In case of \eqref{eq:HighFrequencyDistributionII} $k_2$, $k_4$, $k_6$ are not of the same sign. If $k_i$, $i \in \{2,4,6\}$ and $k_1$ are of the same sign, then $|k_i - k_1| \sim N_1^*$. If $k_i$ and $k_1$ are of different signs, then $|k_i+k_1| \sim N_1^*$.}

\emph{In case of \eqref{eq:HighFrequencyDistributionIII} $k_1$, $k_3$, $k_5$ are not of the same sign. If $k_i$, $i \in \{1,3,5\}$ and $k_2$ are of the same sign, then $|k_i-k_2| \sim N_1^*$. If $k_i$ and $k_2$ are of different signs, then $|k_i+k_2| \sim N_1^*$},
\item[(iii)] $N_1^* \sim N_5^*$.
\end{itemize}

\medskip

We clarify that we include the case $N_1^* \sim N_5^*$ because it can be estimated without taking advantage of possibly large resonance function. However, we recall the example by V. Sohinger \cite{Tzirakis2010}, which states that for any $K \in \N$,
\begin{equation*}
(k_1,\ldots,k_6) = K (5,-3,6,-2,1,-7) \text{ satisfies } \sum_{i=1}^6 k_i = \sum_{i=1}^6 (-1)^i k_i^2 = 0.
\end{equation*}
Hence, in case $N_1^* \sim N_5^*$, the resonance function can also vanish.

We define
\begin{equation*}
\overline{M}_6(k_1,\ldots,k_6) = \sum_{\substack{I_1,\ldots,I_6: \\ I_i \subseteq A_{N_i}, \\ (*) \text{ holds}}} M_6(k_1,\ldots,k_6) \chi_{I_1}(k_1) \ldots \chi_{I_6}(k_6).
\end{equation*}

\subsubsection{Estimates in the resonant region}
In the following we establish size and regularity estimates on the multiplier
\begin{equation}
\label{eq:DefMBar}
\overline{M}_6(k_1,\ldots,k_6) = \sum_{i=1}^6 (-1)^{i+1} m^2(k_i) k_i^2
\end{equation}
by extending the symbol from $\Gamma_6$ to $\R^6$. The extended symbol will be denoted by $\overline{M}_6^{\R}$. We take advantage of additional frequency localization such that we only have to show regularity of the derivatives in the sense, which takes into account the localization of the frequencies to intervals and not the absolute size of the frequencies.
By symmetry we can assume
\begin{equation*}
|k_1| \geq |k_3| \geq |k_5| \text{ and } |k_2| \geq |k_4| \geq |k_6|.
\end{equation*}
Hence, we have $\{k_1^*,k_2^*\} = \{k_1,k_2\}$.

We use a Fourier series argument on 
\begin{equation*}
\overline{M}_6(k_1,\ldots,k_6) \chi_{I_1}(k_1) \ldots \chi_{I_6}(k_6)
\end{equation*}
with the frequencies $k_i$ being localized to intervals of length $L_i \in 2^{\Z}$ with absolute size $N_i \in 2^{\N_0}$. The following is well-known. We state a precise version for future reference.
\begin{proposition}
Suppose that there is $\overline{M}^{\R}_6: \R^6 \to \C$ with
\begin{equation*}
\overline{M}^{\R}_6(k_1,\ldots,k_6) \chi_{I_1}(k_1) \ldots \chi_{I_6}(k_6) = \overline{M}_6(k_1,\ldots,k_6) \chi_{I_1}(k_1) \ldots \chi_{I_6}(k_6) \text{ on } \Gamma_6,
\end{equation*}
and there is $\overline{M}_6(I_1,\ldots,I_6) > 0$ such that
\begin{equation}
\label{eq:RegularityMBar}
|\partial^\alpha_{\ell_j} \overline{M}^{\R}_6(\ell_1,\ldots,\ell_6)| \lesssim \frac{\overline{M}_6(I_1,\ldots,I_6)}{L_j^{\alpha}} \text{ for } 0 \leq | \alpha | \leq 15.
\end{equation}

Then we have the Fourier series expansion:
\begin{equation}
\label{eq:FourierSeriesExpansion}
\overline{M}^{\R}_6(k_1,\ldots,k_6) = \frac{1}{L_1 \ldots L_6} \sum_{\xi_1,\ldots,\xi_6 \in \Z} m_6(\frac{\xi_1}{L_1},\ldots,\frac{\xi_6}{L_6}) e^{i \frac{k_1 \xi_1}{L_1}} \ldots e^{i \frac{k_6 \xi_6}{L_6}}
\end{equation}
and the Fourier coefficients are given by
\begin{equation}
\label{eq:FourierCoefficients}
m_6(\frac{\xi_1}{L_1},\ldots,\frac{\xi_6}{L_6}) = \int_{\ell_i \in L_i} e^{-i \frac{\xi_1 \ell_1}{L_1}} \ldots e^{- \frac{\xi_6 \ell_6}{L_6}} \overline{M}^{\R}_6(\ell_1,\ldots,\ell_6) d\ell_1 \ldots d \ell_6.
\end{equation}
The coefficients satisfy the estimate
\begin{equation}
\label{eq:EstimateFourierCoefficients}
\big| m_6(\frac{\xi_1}{L_1},\ldots,\frac{\xi_6}{L_6}) \big| \lesssim L_1 \ldots L_6 \overline{M}_6(I_1,\ldots,I_6) \langle \xi_{\max} \rangle^{-15}.
\end{equation}
\end{proposition}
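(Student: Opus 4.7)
The plan is to execute a standard Fourier-series-with-integration-by-parts argument in three steps: first justify the Fourier series representation, then obtain a trivial $L^\infty$ bound (handling small $\xi_{\max}$), and finally repeatedly integrate by parts to obtain the decay. Since $\overline{M}^{\R}_6$ is defined on all of $\R^6$ with the regularity bounds \eqref{eq:RegularityMBar}, I would first multiply by smooth cutoffs $\phi_j$ that equal $1$ on $I_j$, are supported on a slightly larger interval, and obey $|\partial^\alpha \phi_j| \lesssim L_j^{-\alpha}$. The resulting product is smooth and compactly supported, so its periodic extension on the appropriate product box admits the standard multidimensional Fourier series \eqref{eq:FourierSeriesExpansion} with Fourier coefficients \eqref{eq:FourierCoefficients}, up to normalisation conventions.

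The trivial bound is then immediate: the $\alpha = 0$ case of \eqref{eq:RegularityMBar} gives $\|\overline{M}^{\R}_6\|_{L^\infty} \lesssim \overline{M}_6(I_1,\ldots,I_6)$, and inserting this into \eqref{eq:FourierCoefficients} yields $|m_6| \lesssim L_1 \cdots L_6 \overline{M}_6(I_1,\ldots,I_6)$. This already settles \eqref{eq:EstimateFourierCoefficients} in the regime $\langle \xi_{\max} \rangle \lesssim 1$.

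For the decay in $\xi_{\max}$, I would pick the index $j_*$ with $|\xi_{j_*}| = \xi_{\max}$ and use the identity
\begin{equation*}
e^{-i \xi_{j_*} \ell_{j_*}/L_{j_*}} = \frac{L_{j_*}}{-i\xi_{j_*}} \, \partial_{\ell_{j_*}} e^{-i \xi_{j_*} \ell_{j_*}/L_{j_*}}
\end{equation*}
to integrate by parts $15$ times in the $\ell_{j_*}$ variable, with boundary terms vanishing thanks to the cutoff support. Each iteration trades a factor $L_{j_*}/|\xi_{j_*}|$ for a $\partial_{\ell_{j_*}}$ applied to $\overline{M}^{\R}_6$; by \eqref{eq:RegularityMBar} with the multi-index $\alpha$ concentrated as $\alpha = 15$ in the $j_*$-coordinate, the fifteen derivatives are controlled by $L_{j_*}^{-15} \overline{M}_6(I_1,\ldots,I_6)$, while the phase prefactors contribute $(L_{j_*}/|\xi_{j_*}|)^{15}$. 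Multiplying by the $L_1 \cdots L_6$ volume of integration yields $|m_6| \lesssim L_1 \cdots L_6 \overline{M}_6(I_1,\ldots,I_6) |\xi_{\max}|^{-15}$, which is the claimed estimate. I expect no serious obstacle; the only bookkeeping is to ensure the cutoff interacts cleanly with \eqref{eq:RegularityMBar} (it does, since its derivatives scale with the same $L_j^{-\alpha}$ factors) and that the normalisation conventions (factors of $2\pi$ in periods versus volume factors) balance correctly.
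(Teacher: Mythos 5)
Your proposal is correct and follows essentially the same route as the paper: the $\alpha=0$ case of \eqref{eq:RegularityMBar} gives the trivial bound, and fifteen integrations by parts in the variable carrying $\xi_{\max}$ give the decay. Your explicit smooth cutoffs $\phi_j$ are in fact a welcome refinement, since they justify discarding the boundary terms in the integration by parts (equivalently, the smoothness of the periodization), a point the paper's proof passes over silently.
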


\begin{proof}
For $\xi = 0$, \eqref{eq:EstimateFourierCoefficients} is immediate from \eqref{eq:RegularityMBar} for $\alpha = 0$. For $\xi \neq 0$ we have through integration by parts
\begin{equation*}
\begin{split}
&\quad \int \big( \frac{L_i}{-i \xi_j} \big) \frac{\partial}{\partial \ell_j} \big( e^{ - i \frac{\xi_j \ell_j}{L_j}} \big) \big( \prod_{i \neq j} e^{-i \frac{\xi_i \ell_i}{L_i}} \big) \overline{M}_6(\ell_1,\ldots,\ell_6) d\ell_1 \ldots d\ell_6 \\
&= \frac{L_j}{(-i \xi_j)} \int \big( e^{-i \frac{\xi_1 \ell_1}{L_1}} \ldots e^{-i \frac{\xi_6 \ell_6}{L_6}} \big) \big( \frac{\partial}{\partial \ell_j} \overline{M}_6(\ell_1,\ldots,\ell_6) \big) d\ell_1 \ldots d\ell_6.
\end{split}
\end{equation*}
Hence, \eqref{eq:EstimateFourierCoefficients} is immediate from repeated integration by parts.

\end{proof}

%
%
\begin{remark}
\label{rem:FourierSeriesArgument}
We use the Fourier series expansion in the following way: We want to estimate
\begin{equation*}
\int_{\Gamma_6 \times [0,T]} \overline{M}_6(k_1,\ldots,k_6) \chi_{I_1}(k_1) \ldots \chi_{I_6}(k_6) \hat{u}_1(k_1,t) \ldots \hat{u}_6(k_6,t) d\Gamma_6 dt.
\end{equation*}
We use \eqref{eq:FourierSeriesExpansion} to find
\begin{equation*}
\begin{split}
&\quad \int_{\Gamma_6 \times [0,T]} \overline{M}_6(k_1,\ldots,k_6) \chi_{I_1}(k_1) \ldots \chi_{I_6}(k_6) \hat{u}_1(k_1,t) \ldots \hat{u}_6(k_6,t) d\Gamma_6 dt \\
&= \int_{\Gamma_6 \times [0,T]} \frac{1}{L_1 \ldots L_6} \sum_{\xi_1,\ldots,\xi_6 \in \Z} m_6\big( \frac{\xi_1}{L_1},\ldots,\frac{\xi_6}{L_6} \big)  e^{i \frac{k_1 \xi_1}{L_1}} \ldots e^{i \frac{k_6 \xi_6}{L_6}} \prod_{i=1}^6 \hat{u}_i(k_i,t) d\Gamma_6 dt \\
&= \frac{1}{L_1 \ldots L_6} \sum_{\xi_1,\ldots,\xi_6 \in \Z} m_6(\frac{\xi_1}{L_1},\ldots,\frac{\xi_6}{L_6}) \int_{\T_\lambda \times [0,T]} u_1(x+\frac{\xi_1}{L_1},t) \ldots u_6(x +\frac{\xi_6}{L_6},t) dx dt.
\end{split}
\end{equation*}
At this point we use Strichartz estimates on the product $u_1 \ldots u_6$. Since Strichartz estimates are translation invariant, it suffices to show an estimate
\begin{equation*}
\big| \int_{\T_\lambda \times [0,T]} u_1(x,t) \ldots u_6(x,t) dx dt \big| \lesssim C(I_1,\ldots,I_6) \prod_{i=1}^6 \| u_i \|_{Y^1_T}
\end{equation*}
to conclude by the estimate \eqref{eq:EstimateFourierCoefficients}:
\begin{equation*}
\begin{split}
&\quad \big| \int_{\Gamma_6 \times [0,T]} \overline{M}_6(k_1,\ldots,k_6) \chi_{I_1}(k_1) \ldots \chi_{I_6}(k_6) \hat{u}_1(k_1,t) \ldots \hat{u}_6(k_6,t) d\Gamma_6 dt \big| \\
&\lesssim \overline{M}_6(I_1,\ldots,I_6) C(I_1,\ldots,I_6) \prod_{i=1}^6 \| u_i \|_{Y^1_T}.
\end{split}
\end{equation*}
\end{remark}
%
%

In the following we establish size and regularity estimates for $\overline{M}_6$ by extending the symbol to $\R^6$. We take advantage of additional frequency localization such that we only have to show regularity of the derivatives in the above sense: the required regularity hinges on the possibly smaller frequency localization, compared to the possibly large dyadic frequency. The size estimates are like in \cite{LiWuXu2011}.

\begin{proposition}
\label{prop:SizeRegularityMBar}
Let $I_1,\ldots,I_6$ denote intervals of length $L_i \in 2^{\Z}$. The multiplier $\overline{M}_6(k_1,\ldots,k_6) \chi_{I_1}(k_1) \ldots \chi_{I_6}(k_6)$ defined in \eqref{eq:DefMBar} on $\Gamma_6$ extends trivially to $\R^6$ and satisfies size and regularity assumptions under the following conditions on $I_1,\ldots,I_6$.
\begin{itemize}
\item[(i)] For $|k_i| \sim N_i \in 2^{\N_0}$ and $N_1^* \sim N_3^*$ we have the size estimate and regularity estimates \eqref{eq:RegularityMBar} with
\begin{equation}
\label{eq:SizeEstimateI}
|\overline{M}_6(I_1,\ldots,I_6)| \lesssim m(N_1^*) N_1^* m(N_3^*) N_3^*.
\end{equation}
If $N_1^* \gg N_3^*$ and $|I_1| \sim |I_2| \sim N_3^*$, we have the size and regularity estimate \eqref{eq:RegularityMBar} with
\begin{equation}
\label{eq:SizeEstimateII}
|\overline{M}_6(I_1,\ldots,I_6)| \lesssim m(N_1^*) N_1^* m(N_3^*) N_3^*.
\end{equation} 
\item[(ii)] If $N_1^* \sim |k_1| \sim |k_2| \gtrsim N \gg N_3^* \sim N_4^*$, $|I_1| \sim |I_2| \lesssim \frac{(N_3^*)^2}{N_1^*}$, and $|k_1+k_2| \lesssim \frac{(N_3^*)^2}{N_1^*}$ for $k_i \in I_i$, then the trivial extension satisfies the size and regularity estimate \eqref{eq:RegularityMBar} with
\begin{equation*}
|\overline{M}_6(I_1,\ldots,I_6)| \lesssim (N_3^*)^2.
\end{equation*} 
\item[(iii)] If $|I_j| \lesssim N_5^*$ for $j=1,2,3,4$, and $\max( |k_1+k_2|, |k_3+k_4|) \lesssim N_5^*$, then we have the size and regularity estimate
\begin{equation*}
|\overline{M}_6(I_1,\ldots,I_6)| \lesssim m(N_1^*) N_1^* N_5^*.
\end{equation*}
\item[(iv)] If $|I_j| \lesssim N_{12}$ for $j=1,2,3,4$, and $N_1 \gtrsim N_{12} \sim |k_1+k_2| \sim |k_3+k_4| \gg |k_5^*|$, then the size and regularity estimates \eqref{eq:RegularityMBar} hold with $|\overline{M}_6(I_1,\ldots,I_6)| \lesssim m(N^*_1) N_1^* N_{12}$. 
\end{itemize}
\end{proposition}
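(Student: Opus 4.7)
The plan is to work with the \emph{trivial} extension $\overline{M}_6^{\R}(k_1,\ldots,k_6) = \sum_{i=1}^6 (-1)^{i+1} m^2(k_i) k_i^2$, which agrees with $\overline{M}_6$ on $\Gamma_6$. The key observation that permits this simple choice is that whenever a case forces a bound of the form $|k_1+k_2| \lesssim \delta$ through the convolution constraint on $\Gamma_6$, the hypothesis $|I_1|, |I_2| \lesssim \delta$ guarantees the same bound on the entire product box $I_1 \times I_2$ up to constants: any $(k_1,k_2) \in I_1 \times I_2$ differs from a point in $\Gamma_6 \cap (I_1 \times \cdots \times I_6)$ by at most $|I_1|+|I_2|$ in the $k_1+k_2$ direction. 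This propagation of the pairing constraint from the diagonal set to the full box is what makes the trivial extension effective.

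For the size estimates, I would pair the terms $m^2(k_1)k_1^2 - m^2(k_2)k_2^2$ (and $m^2(k_3)k_3^2 - m^2(k_4)k_4^2$ where applicable) and apply the mean value theorem to the even symbol $f(\xi) := m^2(\xi)\xi^2$, giving $f(k_1) - f(-k_2) = f'(\eta)(k_1+k_2)$ with $|f'(\eta)| \lesssim m^2(N_1^*)\, N_1^*$. The stated bounds then follow by substituting the available upper bound on $|k_1+k_2|$ in each case: $N_3^*$ in the non-trivial subcase of (i), $(N_3^*)^2/N_1^*$ in (ii), $N_5^*$ in (iii), and $N_{12}$ in (iv). When $N_1^* \sim N_3^*$ (the first part of (i)) no cancellation is needed and the stated bound follows from the trivial pointwise estimate $|m^2(k_i)k_i^2| \lesssim m^2(N_1^*)(N_1^*)^2$. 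Residual unpaired terms $m^2(k_i)k_i^2$ for smaller frequencies are absorbed using the monotonicity of $\xi \mapsto m(\xi)\xi$ combined with $m \leq 1$.

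The regularity estimates come almost for free from the trivial extension, since it decouples across coordinates: $\partial_{k_j}^\alpha \overline{M}_6^{\R} = (-1)^{j+1} f^{(\alpha)}(k_j)$, and $|f^{(\alpha)}(\xi)| \lesssim m^2(|\xi|)|\xi|^{2-\alpha}$ by direct differentiation of the symbol. The target bound $\overline{M}_6(I_1,\ldots,I_6)/L_j^\alpha$ is then verified case by case. In cases (i) with $N_1^* \sim N_3^*$, and (iii), (iv), the choice of $L_j$ matches the natural scale of the cancellation so the bound reduces essentially to the size estimate.

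The main obstacle is the regularity in case (ii), where $L_1, L_2 \lesssim (N_3^*)^2/N_1^*$ is much smaller than the frequency scale $N_1^*$, so the decoupled derivative $|f^{(\alpha)}(k_1)|$ is a priori much larger than the small target size $(N_3^*)^2$ would naively allow. However, for $\alpha \geq 1$, the required inequality $m^2(N_1^*)(N_1^*)^{2-\alpha} \lesssim (N_3^*)^2/L_1^\alpha$ rearranges to $m^2(N_1^*) \lesssim (N_1^*/N_3^*)^{2\alpha-2}$, which holds since $N_1^* \geq N_3^*$ and $m \leq 1$. The case $\alpha = 0$ is the size bound itself, for which the cancellation is essential. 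Beyond this, the bulk of the work is careful bookkeeping of the relative sizes of $m(N_j)N_j$ versus $L_j$ in each sub-case; the regularity up to order $15$ required for the tail estimate \eqref{eq:EstimateFourierCoefficients} follows inductively from the first-order analysis since higher derivatives of $f$ only decrease in the relevant regimes.
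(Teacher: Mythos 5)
Your proposal is correct and follows essentially the same route as the paper: trivial extension of the symbol, mean value theorem applied to $f(\xi)=m^2(\xi)\xi^2$ on the paired terms using the convolution/interval constraints on $|k_1+k_2|$, absorption of the lower-order terms via monotonicity of $m(\xi)\xi$, and decoupled derivative bounds $|f^{(\alpha)}(k_j)|\lesssim m^2(|k_j|)|k_j|^{2-\alpha}$ checked against $\overline{M}_6(I_1,\ldots,I_6)/L_j^{\alpha}$. Your explicit verification of the regularity in case (ii) (and the observation that shorter intervals make the $\alpha\geq 1$ bounds easier) correctly fills in a step the paper only gestures at.
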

\begin{proof}
Case (i): The size and regularity estimates \eqref{eq:SizeEstimateI} in case $N_1^* \sim N_3^*$ are straight-forward. If $N_1^* \gg N_3^*$, the size estimate \eqref{eq:SizeEstimateII} follows from the mean-value theorem and convolution constraint, which yields
\begin{equation*}
|m^2(k_1) k_1^2 - m^2(k_2) k_2^2| \lesssim m^2(N_1^*) N_1^* N_3^*.
\end{equation*}
Lastly,
\begin{equation*}
m^2(N_1^*) N_1^* N_3^* + m^2(N_3^*) (N_3^*)^2 \lesssim m(N_1^*) m(N_3^*) N_1^* N_3^*
\end{equation*}
because $m(k)|k|$ is increasing in $|k|$.

Case (ii): The size estimate is given by
\begin{equation*}
\begin{split}
|m^2(k_1) k_1^2 - m^2(k_2) k_2^2| &\lesssim m^2(N_1^*) |k_1-k_2||k_1 + k_2| \\
&\lesssim m^2(N_1^*) N_1^* \frac{(N_3^*)^2}{N_1^*} \lesssim (N_3^*)^2.
\end{split}
\end{equation*}
and $m^2(k_i) k_i^2 \lesssim (N_3^*)^2$ for $i=3,\ldots,6$. To show the regularity estimate for $k_1$ and $k_2$, we need to take into account the additional frequency localization.

Case (iii): In this case we find like in \cite[Lemma~3.4(iii)]{LiWuXu2011}
\begin{equation*}
\begin{split}
&\quad |m^2(k_1) k_1^2 - m^2(k_2) k_2^2 + \ldots + m^2(k_5) k_5^2 - m^2(k_6) k_6^2| \\
 &\leq |m^2(k_1) k_1^2 - m^2(k_2) k_2^2| + |m^2(k_3) k_3^2 - m^2(k_4) k_4^2| + |m^2(k_5) k_5^2 - m^2(k_6) k_6^2| \\
&\lesssim m^2(N_1^*) N_1^* N_5^* + m^2(N_3^*) N_3^* N_5^* + m^2(N_5^*) (N_5^*)^2 \\
&\lesssim m(N_1^*) N_1^* N_5^* + m(N_1^*) N_1^* m(N_3^*) N_5^* + m(N_1^*) N_1^* m(N_5^*) N_5^* \\
&\lesssim m(N_1^*) N_1^* N_5^*.
\end{split}
\end{equation*}
The central estimate follows from the mean-value theorem, the pen-ultimate estimate from $m(|k|)|k|$ being increasing in $|k|$, and the ultimate estimate from uniform boundedness of $m$.

Case (iv): In this case we can argue like in (iii).
\end{proof}

With the Fourier series expansion at hand, we can show the following estimate for the resonant part:
\begin{proposition}
\label{prop:EnergyGrowthResonant1d}
For any $\frac{1}{3} < s < \frac{1}{2}$, $0<T \lesssim \frac{\lambda}{N}$, we have
\begin{equation*}
\big| \int_0^T \Lambda_6(\overline{M}_6) dt \big| \lesssim N^{-3+} \| I u \|^6_{Y^1_T}.
\end{equation*}
\end{proposition}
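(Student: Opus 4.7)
\emph{Strategy.} The plan is to estimate $\int_0^T \Lambda_6(\overline{M}_6)\,dt$ by decomposing according to the three sub-cases (i)--(iii) of the resonant set (*), and in each case to combine the size and regularity bounds of Proposition~\ref{prop:SizeRegularityMBar} with the $L^6_{t,x}$ linear and $L^2_{t,x}$ bilinear Strichartz estimates of Propositions~\ref{prop:BilinearStrichartzV2}--\ref{prop:LinearStrichartz} on the long time interval $[0,\lambda/N]$. For each dyadic configuration $|k_i|\sim N_i$ satisfying (*) and admissible partition of the supports into intervals $I_1,\ldots,I_6$, we invoke the Fourier series expansion of Remark~\ref{rem:FourierSeriesArgument} to replace $\overline{M}_6\chi_{I_1\times\cdots\times I_6}$ by a weighted sum of plane-wave kernels, the weights being bounded by $|\overline{M}_6(I_1,\ldots,I_6)|$ times the summable factor $\langle\xi_{\max}\rangle^{-15}$. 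This reduces matters to estimating the translation-invariant integral $\iint_{[0,T]\times\T_\lambda}\prod_{i=1}^6 P_{I_i,N_i}u_i\,dx\,dt$; each factor is converted from $u_i$ to $Iu_i$ by writing $\hat u_i(k_i)=m(k_i)^{-1}\widehat{Iu_i}(k_i)$, so that the estimates are eventually expressed in terms of $\|Iu_i\|_{Y^1_T}$ at the multiplicative cost $m(N_i^*)^{-1}(N_i^*)^{-1}$ per factor.

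\emph{Case analysis.} In case (i), $N_1^*\sim N_2^*\gg N_3^*\sim N_4^*$ with $|k_1+k_2|\lesssim(N_3^*)^2/N_1^*$, I would partition $k_1,k_2$ into intervals of length $(N_3^*)^2/N_1^*$ centered at opposite signs and apply the conjugation variant of the bilinear Strichartz to the pair $(P_{I_1}u_1,\overline{P_{I_2}u_2})$, gaining a factor $(N_1^*)^{-1/2}$; the remaining four factors are controlled by the $L^6_{t,x}$-Strichartz, with almost orthogonality and \eqref{eq:DisjointYsProperty} removing the sum over $I_1,I_2$. In case (ii), $N_1^*\sim N_4^*\gg N_5^*$, the sign and bilinear restrictions in (*) confine the localization to intervals of size $\lesssim N_5^*$, and two applications of the bilinear Strichartz to the two separated pairs give a joint gain $(N_1^*)^{-1}$; combined with the size bound $|\overline{M}_6|\lesssim m(N_1^*)N_1^*\cdot N_5^*$ from Proposition~\ref{prop:SizeRegularityMBar}(iii)--(iv), this produces the desired decay. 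In case (iii), $N_1^*\sim N_5^*$, the resonance function may vanish (Sohinger's example \cite{Tzirakis2010}), so one simply applies the $L^6_{t,x}$-Strichartz to all six factors and relies on the size bound $|\overline{M}_6|\lesssim m(N_1^*)^2(N_1^*)^2$ together with the $m$-factor arithmetic.

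\emph{Main obstacle and summation.} The pointwise bound produced in each case has the schematic form $\overline{M}_6(I_1,\ldots,I_6)\cdot(\text{Strichartz gain})\cdot\prod_{i=1}^6 m(N_i^*)^{-1}(N_i^*)^{-1}\cdot\prod_{i=1}^6\|P_{I_i,N_i}Iu_i\|_{Y^1_T}$, and dyadic summation over $N_1^*\geq\ldots\geq N_6^*$ via Cauchy--Schwarz against the Littlewood--Paley decomposition of $\|Iu\|_{Y^1_T}$ yields the claimed $N^{-3+}$ decay. The logarithmic losses $(\log\lambda)^2$ and $(\log N)^{18}$ are absorbed into the $N^{0+}$ loss using $\lambda\sim N^{(1-s)/s+\varepsilon}$. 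The main obstacle is to reconcile the three sub-cases under one common bound: case (iii) is the most resonant (no dispersive gap to exploit) while case (i) is combinatorially the most intricate due to the bilinear restriction on $k_1+k_2$ and the need for a possibly further sub-decomposition of $(u_3,u_4)$ when these frequencies are separated. The threshold $s>\frac{1}{3}$ reflects the tightest point of this balance, at which every sub-case is verified to produce strictly better than $N^{-3}$ decay on the extended time scale $\lambda/N$.
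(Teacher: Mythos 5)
Your overall architecture (Fourier series expansion of the localized multiplier, conversion of weights $m(N_i)^{-1}\langle N_i\rangle^{-1}$, Strichartz estimates on $[0,\lambda/N]$, almost-orthogonal summation) matches the paper, but the treatment of case (i) contains a genuine error. In the resonant region $N_1^*\sim N_2^*\gg N_3^*\sim N_4^*$ one has, by definition of the resonant set, $|k_1+k_2|\lesssim (N_3^*)^2/N_1^*$, so the pair $(P_{I_1}u_1,\overline{P_{I_2}u_2})$ has essentially \emph{no} frequency separation after conjugation: the hypothesis $\inf|k_1+k_2|\gtrsim M$ of the conjugated bilinear estimate fails precisely here, and the product $u_1\overline{u_2}$ is the parallel (non-transversal) interaction, so no gain of $(N_1^*)^{-1/2}$ is available from that pairing. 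The correct move, and what the paper does in region $A_1$, is to pair each high frequency with a \emph{low} one, i.e.\ estimate $\|u_1u_3\|_{L^2_{t,x}}$ and $\|u_2u_4\|_{L^2_{t,x}}$, where the separation is $\sim N_1^*$ and each bilinear estimate gains $(1/N_1^*+1/N)^{1/2}\sim N^{-1/2}$; two such gains are needed, since the weights only supply $N^{2s-2}(N_1^*)^{-2s}\le N^{-2}$ and the target is $N^{-3+}$. Your proposed H\"older splitting ``one bilinear pair in $L^2$ plus four factors in $L^6$'' also does not close ($\tfrac12+\tfrac46>1$); the admissible combinations used in the paper are two $L^2$ pairs plus two $L^\infty$ factors, one $L^2$ pair plus three $L^6$ plus one $L^\infty$, or six $L^6$.

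Your case (ii) is also oversimplified in a way that hides where $s>\tfrac13$ actually enters. When $N_1^*\sim N_4^*$ and the four high frequencies split into two pairs of opposite sign with $|k_1+k_2|\sim|k_3+k_4|\sim N_{12}\ll N_1^*$ (the paper's subregion $A_{3214b}$), the two available bilinear estimates act on high--high pairs with separation only $N_{12}$, giving $(1/N_{12}+1/N)^{1/2}$ each rather than $(N_1^*)^{-1/2}$; one must then sum dyadically over $N_5^*\ll N_{12}\lesssim N_1^*$, and it is exactly this summation that forces $s>\tfrac13$. As written, your argument would not detect this threshold, and the claimed ``joint gain $(N_1^*)^{-1}$'' is not achievable in that subregion.
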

\begin{proof}
We carry out a dyadic frequency localization such that $|k_i| \sim N_i \in 2^{\N_0}$. A decreasing rearrangement of $N_1,\ldots,N_6$ is denoted by $N_1^* \geq N_2^* \ldots \geq N_6^*$. Note by LP-dichotomy that $N_1^* \sim N_2^*$. In the resonant case we have $N_1^* \sim N_2^* \gtrsim N_3^* \sim N_4^*$ and by symmetry considerations we can suppose that
\begin{equation*}
N_1 \geq N_3 \geq N_5 \text{ and } N_2 \geq N_4 \geq N_6 \text{ and } N_1 \geq N_2.
\end{equation*}
Moreover, in the resonant case we have $\{ N_1,N_2 \} = \{N_1^*,N_2^* \}$, and we have further restrictions on the signs and separation of $k_i$ if $N_1^* \sim N_4^*$, which we shall recall below.

Since $\overline{M}_6 = 0$ for $|k_1|,\ldots,|k_6| <N$ we suppose that $N_1^* \sim N_2^* \gtrsim N$. Denote $I_T = [0,T]$. It suffices to show the following estimate:
\begin{equation}
\label{eq:KeyEstimateResonantPart}
\big| \int_{\Gamma_6} \int_{I_T} \frac{\overline{M}_6(k_1,\ldots,k_6) \hat{u}_1(k_1,t) \ldots \hat{u}_6(k_6,t)}{\langle k_1 \rangle m(k_1) \ldots \langle k_6 \rangle m(k_6)} d\Gamma_6 dt \big| \lesssim N^{-3+} (N_1^*)^{0-} \prod_{i=1}^6 \| u_i \|_{Y^0_{i,T}}
\end{equation}
with $\| u_i \|_{Y^0_{i,T}} = \| u_i \|_{Y^0_{(-1)^i,T}}$.

We consider the following regions, which take into account the size of $m$:
\begin{equation*}
\begin{split}
A_1 &= \{ \underline{k} \in \Gamma_6 : N_2^* \gtrsim N \gg N_3^* \sim N_4^* \}, \\
A_2 &= \{ \underline{k} \in \Gamma_6 : N_3^* \gtrsim N \gg N_4^* \}, \\
A_3 &= \{ \underline{k} \in \Gamma_6 : N_4^* \gtrsim N \gg N_5^* \}, \\
A_4 &= \{ \underline{k} \in \Gamma_6 : N_5^* \gtrsim N \}.
\end{split}
\end{equation*}
Since $A_2$ is a non-resonant region, we do not estimate this here.

\medskip

$\bullet$ Estimate in $A_1$: We can use almost orthogonality to decompose the support of $k_1$ and $k_2$ into intervals of length $N_3^*$. By Proposition \ref{prop:SizeRegularityMBar} we have $\overline{M}_6(I_1,\ldots,I_6) \lesssim (N_3^*)^2$, which implies by the Fourier series argument (see Remark \ref{rem:FourierSeriesArgument}), H\"older's inequality, and two bilinear Strichartz estimates from Proposition \ref{prop:BilinearStrichartzV2}:
\begin{equation*}
\begin{split}
\eqref{eq:KeyEstimateResonantPart} &\lesssim N^{2s-2} \big| \int_{A_1 \times I_T} \frac{\hat{u}_1(k_1,t_1) \ldots \hat{u}_6(k_6,t)}{|k_1^*|^s |k_2^*|^s \langle k_5^* \rangle \langle k_6^* \rangle} d \Gamma_6 dt \big| \\
&\lesssim N^{-2} N^{2s} (N_1^*)^{-2s} \big| \int_{\T_\lambda \times I_T} u^*_1 u^*_3 \, u_2^* u_4^* \, J_x^{-1} u_5^* J_x^{-1} u_6^* dx dt \big| \\
&\lesssim N^{-2} N^{2s} (N_1^*)^{-2s} \| u_1^* u_3^* \|_{L^2_{t,x}(I_T \times \T_\lambda)} \| u_2^* u_4^* \|_{L^2_{t,x}(I_T \times \T_\lambda)} \| J_x^{-1} u_5^* \|_{L^\infty_{t,x}} \| J_x^{-1} u_6^* \|_{L^\infty_{t,x}} \\
&\lesssim N^{-3+} (N_1^*)^{0-} \prod_{i=1}^6 \| u_i \|_{Y^0_{i,T}}.
\end{split}
\end{equation*}

Hence, the proof is complete in $A_1$.

\medskip

$\bullet$ Estimate in $A_3$: we have
\begin{equation*}
lhs \eqref{eq:KeyEstimateResonantPart} \lesssim N^{4s-4} \big| \int_{A_3 \times I_T} \frac{\overline{M}_6(k_1,\ldots,k_6) \hat{u}_1(k_1,t) \ldots \hat{u}_6(k_6,t)}{|k_1^*|^s |k_2^*|^s |k_3^*|^s |k_4^*|^s \langle k_5^* \rangle \langle k_6^* \rangle} d \Gamma_6 dt \big|.
\end{equation*}
We split $A_3$ into two regions:
\begin{equation*}
\begin{split}
A_{31} &= \{ \underline{k} \in A_3 : N_1^* \gg N_3^* \}, \\
A_{32} &= \{ \underline{k} \in A_3 : N_1^* \sim N_3^* \}.
\end{split}
\end{equation*}
In the first case we can use the same argument like for the estimate of $A_1$, using the size and regularity of $\overline{M}_6$ as stated in Proposition \ref{prop:SizeRegularityMBar}(i), and two bilinear Strichartz estimates by Proposition \ref{prop:BilinearStrichartzV2}.
For the estimate of $A_{32}$ we recall that $N_3^* \sim N_4^*$ in the resonant case, so that
\begin{equation*}
N_1^* \sim N_4^* \text{ in } A_{32}.
\end{equation*}
We recall notations from \cite{LiWuXu2011}: Split $A_{32}$ into three regions
\begin{equation*}
\begin{split}
A_{321} &= \{ \underline{k} \in A_{32} : \{ N_1^*,\ldots,N_4^* \} = \{N_1,\ldots,N_4 \} \}, \\
A_{322} &= \{ \underline{k} \in A_{32} : \{ N_1^*,\ldots,N_4^* \} = \{N_1,N_2,N_3,N_5 \} \}, \\
A_{323} &= \{ \underline{k} \in A_{32} : \{ N_1^*,\ldots,N_4^* \} = \{N_1,N_2,N_4,N_6\} \}.
\end{split}
\end{equation*}
Case 2(a): Estimate in $A_{321}$. We may assume that $k_1>0$ by symmetry. Then we have to consider the following four cases of signs distributed on $k_1,\ldots,k_4$:
\begin{equation*}
\begin{split}
A_{3211} &= \{ \underline{k} \in A_{321} : k_1 >0,k_2>0,k_3<0,k_4<0 \}, \\
A_{3212} &= \{ \underline{k} \in A_{321} : k_1 >0,k_2<0,k_3<0,k_4>0 \}, \\
A_{3213} &= \{ \underline{k} \in A_{321} : k_1 >0,k_2<0,k_3<0,k_4<0 \}, \\
A_{3214} &= \{ \underline{k} \in A_{321} : k_1 >0,k_2<0,k_3>0,k_4<0 \}. 
\end{split}
\end{equation*}
For the estimates of $A_{3211}$,$A_{3212}$,$A_{3213}$ we use bilinear Strichartz estimates, taking advantage of the different directions of propagation.
For $A_{3211}$ and $A_{3212}$ we have by Proposition \ref{prop:SizeRegularityMBar}(i) that $\overline{M}_6(I_1,\ldots,I_6) \lesssim m(N_1^*) m(N_3^*) N_1^* N_3^*$ and can use two bilinear Strichartz estimates on $u_1 u_3$ and $u_2 u_4$. Here we use that
\begin{equation}
\label{eq:SeparationResonantEstimate3211}
|k_1 - k_3| \sim |k_2 - k_4| \gtrsim N_1^*
\end{equation}
and hence, by Proposition \ref{prop:SizeRegularityMBar}:
\begin{equation*}
\begin{split}
lhs \eqref{eq:KeyEstimateResonantPart} &\lesssim N^{2s-2} (N_1^*)^{-2s} \big| \int_{\T_\lambda \times I_T} u_1 u_3 \cdot u_2 u_4 \cdot J_x^{-1} u_5 J_x^{-1} u_6 dx dt \big| \\
&\lesssim N^{2s-2} (N_1^*)^{-2s} \| u_1 u_3 \|_{L^2_{t,x}(I_T \times \T_\lambda)} \| u_2 u_4 \|_{L^2_{t,x}(I_T \times \T_\lambda)} \| J_x^{-1} u_5 \|_{L^\infty_{t,x}} \| J_x^{-1} u_6 \|_{L^\infty_{t,x}} \\
&\lesssim N^{2s-2} (N_1^*)^{-2s} \frac{1}{N} \prod_{i=1}^6 \| u_i \|_{Y^0_{i,T}}.
\end{split}
\end{equation*}
For $A_{3213}$ we observe that
\begin{equation}
\label{eq:SeparationResonantEstimate3213}
|k_1+k_4| \sim |k_2+k_3| = |k_2 + k_3| \gtrsim N_1^* \gtrsim N.
\end{equation}
Hence, we can use the size and regularity estimate $\overline{M}_6 \lesssim m(N_1^*) m(N_3^*) N_1^* N_3^*$, and we can use two bilinear Strichartz estimates on $u_1 u_2$, and $u_2 u_3$. Note that in this case we use Strichartz estimates on pairs of functions, which are estimated in $Y$-spaces of different signs. Hence, we need to check \eqref{eq:SeparationResonantEstimate3213} in contrast to \eqref{eq:SeparationResonantEstimate3211}. We find
\begin{equation*}
\begin{split}
lhs \eqref{eq:KeyEstimateResonantPart} &\lesssim N^{-2+2s} (N_1^*)^{-2s} \big| \int_{\T_\lambda \times I_T} u_1 u_4 \cdot u_2 u_3 \cdot J_x^{-1} u_5 J_x^{-1} u_6 dx dt \big| \\
&\lesssim N^{-2+2s} (N_1^*)^{-2s} \| u_1 u_4 \|_{L^2_{t,x}(I_T \times \T_\lambda)} \| u_2 u_3 \|_{L^2_{t,x}(I_T \times \T_\lambda)} \prod_{i=5}^6 \| J_x^{-1} u_i \|_{L^\infty_{t,x}} \\
&\lesssim N^{-3+} (N_1^*)^{0-} \prod_{i=1}^6 \| u_i \|_{Y^0_{i,T}}.
\end{split}
\end{equation*}
Finally, we turn to $A_{3214}$ and distinguish two cases:
\begin{equation*}
\begin{split}
A_{3214a} &= \{ \underline{k} \in A_{3214} : |k_1 + k_2 | \lesssim |k_5^*| \}, \\
A_{3214b} &= \{ \underline{k} \in A_{3214} : |k_1+k_2| \gg |k_5+k_6 \}.
\end{split}
\end{equation*}

For the estimate in $A_{3214a}$, we observe that
\begin{equation*}
|k_1+k_2| \sim |k_3 + k_4| \lesssim N_5^*.
\end{equation*}
We can localize $k_1$, $k_2$, $k_3$, $k_4$ to intervals of size $N_5^*$ without summation loss by almost orthogonality. Then, we can use the size and regularity estimate from Proposition \ref{prop:SizeRegularityMBar}(iii) to infer that $\overline{M}_6(I_1,\ldots,I_6) \lesssim m(N_1^*) N_1^* N_5^*$ and use one bilinear and three $L^6_{t,x}$-Strichartz estimates to infer:
\begin{equation*}
\begin{split}
lhs \eqref{eq:KeyEstimateResonantPart} &\lesssim N^{3s-3} (N_1^*)^{-3s} \big| \int_{\Gamma_6 \times I_T} \hat{u}_1(k_1,t) \chi_{I_1}(k_1) \ldots \hat{u_6}(k_6,t) \chi_{I_6}(k_6) d \Gamma_6 dt \big| \\
&\lesssim N^{3s-3} (N_1^*)^{-3s} \big| \int_{\T_\lambda \times I_T} u_1 u_5^* \cdot u_2 \cdot u_3 \cdot u_4 \cdot J_x^{-1} u_6^* dx dt \big| \\
&\lesssim N^{3s-3} (N_1^*)^{-3s} \| u_1 u_5^* \|_{L^2_{t,x}(I_T \times \T_\lambda)} \prod_{i=2}^4 \| u_i \|_{L^6_{t,x}} \| J_x^{-1} u_6^* \|_{L^\infty_{t,x}} \\
&\lesssim N^{-3+} (N_1^*)^{0-} N^{-\frac{1}{2}} \prod_{i=1}^6 \| u_i \|_{Y^0_{i,T}}.
\end{split}
\end{equation*}

For the estimate in $A_{3214b}$ we make a dyadic decomposition in $|k_1+k_2| \sim N_{12}$ with $N_5^* \ll N_{12} \lesssim N_1^*$ and by almost orthogonality we localize $k_1$,$k_2$,$k_3$,$k_4$ to intervals $I_1,\ldots,I_4$ of length $N_{12}$. Now we use the size and regularity estimate $\overline{M}_6(I_1,\ldots,I_6) \lesssim m(N_1^*) N_1^* N_{12}$ provided by Proposition \ref{prop:SizeRegularityMBar} and use H\"older's inequality to find
\begin{equation}
\label{eq:A3214bEstimate}
\begin{split}
lhs \eqref{eq:KeyEstimateResonantPart} &\lesssim N^{3s-3} (N_1^*)^{-3s} \big| \int u_1(x,t) \ldots u_6(x,t) dx dt \big| \\
&\lesssim N^{3s-3} N_{12} (N_1^*)^{-3s} \| u_1 u_2 \|_{L^2_{t,x}} \| u_3 u_4 \|_{L^2_{t,x}} \| J_x^{-1} u_5 \|_{L^\infty_{t,x}} \| J_x^{-1} u_6 \|_{L^\infty_{t,x}}.
\end{split}
\end{equation}
We want to apply two bilinear Strichartz estimates.
Note the following: we can write $\| u_1 u_2 \|_{L^2_{t,x}} = \| u_1 \overline{u}_2 \|_{L^2_{t,x}}$ and $u_1$, $\overline{u}_2$ have spatial frequency support in intervals of length $N_{12}$ contained in the positive real numbers. Moreover, the intervals are separated by distance $N_{12}$. By Galilean transform we can use a bilinear Strichartz estimate at frequencies $N_{12}$ and time scale $\frac{\lambda}{N}$ to find
\begin{equation*}
\begin{split}
\| u_1 \overline{u}_2 \|_{L^2_{t,x}} &\lesssim \big( \frac{1}{N_{12}} + \frac{1}{N} \big)^{\frac{1}{2}} \| u_1 \|_{Y^0_{+,T}} \| \overline{u}_2 \|_{Y^0_{+,T}} \\
&\lesssim \big( \frac{1}{N_{12}} + \frac{1}{N} \big)^{\frac{1}{2}} \| u_1 \|_{Y^0_{+,T}} \|u_2 \|_{Y^0_{-,T}}.
\end{split}
\end{equation*}
In a similar spirit we find
\begin{equation*}
\| u_3 u_4 \|_{L^2_{t,x}} \lesssim \big( \frac{1}{N_{12}} + \frac{1}{N} \big)^{\frac{1}{2}} \| u_3 \|_{Y^0_{+,T}} \| u_4 \|_{Y^0_{-,T}}.
\end{equation*}
For $N_{12} \lesssim N$, we have from straight-forward summation
\begin{equation}
\label{eq:A3214bEstimateII}
\eqref{eq:A3214bEstimate} \lesssim N^{-3+} (N_1^*)^{0-} \prod_{i=1}^6 \| u_i \|_{Y^0_{i,T}}.
\end{equation}
If $N \ll N_{12}$, we find after summation over $N_{12} \lesssim N_1^*$ using $s>\frac{1}{3}$
\begin{equation*}
\begin{split}
\eqref{eq:A3214bEstimateII} &\lesssim N^{3s-3} (N_1^*)^{-3s} N_1^* N^{-1} \prod_{i=1}^6 \| u_i \|_{Y^0_{i,T}} \\
&\lesssim N^{-3+} (N_1^*)^{0-} \prod_{i=1}^6 \| u_i \|_{Y^0_{i,T}}.
\end{split}
\end{equation*}

Case 2(b): We turn to the estimate of
\begin{equation*}
A_{322} = \{ \underline{k} \in A_{32} : \{N_1^*,\ldots,N_4^* \} = \{ N_1,N_2,N_3,N_5 \} \}.
\end{equation*}
In the resonant case, $k_1$, $k_3$, and $k_5$ must have different signs. Suppose that $k_1>0$ and $k_2>0$. If $k_3 >0$, $k_5<0$, then we can use two bilinear Strichartz estimates on $u_1 u_2$ and $u_3 u_5$. If $k_1>0$, $k_2>0$, $k_3<0$, $k_5<0$, then we can write
\begin{equation*}
k_1^2 + k_5^2 + k_3^2 -k_2^2 = k_1^2 + k_5^2 + (k_3-k_2)(k_3+k_2). 
\end{equation*}
Hence, in the non-resonant case we have $|k_3+k_2| \gtrsim N_1^*$, which implies that we can use two bilinear Stricharzt estimates on $u_1 u_5$ and $u_2 u_3$.

Case 2(c): Estimate in $A_{323}$. We can apply two bilinear Strichartz estimates for the same reasons like in Case 2(b).

Step 4: Estimate in $A_4$. We can apply the size and regularity estimate of $\overline{M}_6$ provided by Proposition \ref{prop:SizeRegularityMBar}(i) and use six $L^6$-estimates from Proposition \ref{prop:LinearStrichartz}:
\begin{equation*}
\begin{split}
lhs \eqref{eq:KeyEstimateResonantPart} &\lesssim N^{3s-3} \big| \int_{A_4 \times I_T} \frac{\hat{u}_1(k_1,t) \ldots \hat{u}_6(k_6,t)}{(N_2^*)^s (N_4^*)^s (N_5^*)^s m(k_6) \langle k_6^* \rangle} d \Gamma_6 \big| \\
&\lesssim N^{-3+} (N_1^*)^{0-} \| J_x^{0-} u_1 \|_{L^6_{x,t}} \ldots \| J_x^{0-} u_6 \|_{L^6_{x,t}} \lesssim N^{-3+} \prod_{i=1}^6 \| u_i \|_{Y^0_T}.
\end{split}
\end{equation*}

\end{proof}

\subsection{Estimate of the non-resonant contribution}
\label{subsection:NonresonantEstimates}

Next we show the expected size and regularity estimate of $M_6/\Omega_6$ with $|k_i| \sim N_i$ and possibly finer decomposition dictated by the resonance. Recall that we further suppose by symmetry
\begin{equation*}
N_1 \geq N_3 \geq N_5, \quad N_2 \geq N_2 \geq N_4.
\end{equation*}

We show the following:
\begin{lemma}
\label{lem:NonResonantSizeRegularity}
In the non-resonant region $M^1_6 / \Omega_6 : \Gamma_6 \cap \tilde{A}^c \to \C$ satisfies the size and regularity estimate:
\begin{equation*}
\big| \frac{M_6}{\Omega_6}(k_1,\ldots,k_6) \big| \lesssim 1.
\end{equation*}
\end{lemma}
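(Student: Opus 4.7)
The approach is a direct case analysis: the set $\tilde{A}$ described by (*) was built by hand precisely to contain the true resonant set $A = \{\underline{k} \in \Gamma_6 : |\Omega_6| \lesssim |M_6|\}$, so the content of the claim is to verify this containment by matching each subregion of $\tilde{A}^c$ against Lemmas \ref{lem:EasyNonResonance}--\ref{lem:NonResonant4HighFrequencies}. Throughout I would work under the symmetry reduction $N_1 \geq N_3 \geq N_5$, $N_2 \geq N_4 \geq N_6$, $N_1 \geq N_2$, and use the universal size bound
\[
|M_6(k_1,\ldots,k_6)| \lesssim m(N_1^*)N_1^* \, m(N_3^*) N_3^* \lesssim N_1^* N_3^*
\]
from Proposition \ref{prop:SizeRegularityMBar}(i), which reduces the task to producing $|\Omega_6| \gtrsim N_1^* N_3^*$ (or, when the argument naturally gives more, $|\Omega_6| \gtrsim (N_1^*)^2$) on each dyadic sector in $\tilde{A}^c$.

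The case split then follows the structure of (*). If $N_1 \gg N_2$ or $N_1 \sim N_2$ with $N_3^* \gg N_4^*$, Lemma \ref{lem:EasyNonResonance} delivers $|\Omega_6| \gtrsim N_1^* N_3^*$ directly. In the regime $N_1^* \sim N_2^* \gg N_3^* \sim N_4^*$, being in $\tilde{A}^c$ means that clause (i) of (*) fails, so either $k_1,k_2$ share a sign, in which case expanding $\Omega_6 = (k_1-k_2)(k_1+k_2) + O((N_3^*)^2)$ gives $|\Omega_6| \gtrsim (N_1^*)^2$, or they have opposite signs with $|k_1+k_2| \gg (N_3^*)^2/N_1^*$, in which case Lemma \ref{lem:NonResonant2HighFrequencies} yields $|\Omega_6| \gtrsim N_1^* |k_1+k_2| \gg (N_3^*)^2 \gtrsim |M_6|$.

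In the regime $N_1^* \sim N_4^* \gg N_5^*$, the configurations of dyadic sizes fall under one of \eqref{eq:HighFrequencyDistributionI}--\eqref{eq:HighFrequencyDistributionIII}, and the failure of clause (ii) of (*) places us in exactly the hypotheses of Lemma \ref{lem:NonResonant4HighFrequencies}(i)--(iii), so $|\Omega_6| \gtrsim (N_1^*)^2 \gtrsim |M_6|$ in each subcase. Finally, the remaining configuration $N_1^* \sim N_5^*$ is, by clause (iii) of (*), entirely contained in $\tilde{A}$, and hence does not occur in $\tilde{A}^c$. Collecting the cases yields $|M_6/\Omega_6| \lesssim 1$ pointwise on $\Gamma_6 \cap \tilde{A}^c$.

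I do not expect a serious obstacle: the real work — the lower bounds on the resonance function and the corresponding bilinear/quadrilinear separations — is already carried out in Lemmas \ref{lem:EasyNonResonance}, \ref{lem:NonResonant2HighFrequencies}, \ref{lem:NonResonant4HighFrequencies}. The only genuine care required is bookkeeping: confirming that in each of the sign/separation subconfigurations encoded in (*), the negation of the defining clause lands squarely in the hypothesis of one of these lemmas, so that no resonant configuration is inadvertently left outside $\tilde{A}$.
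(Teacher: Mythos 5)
Your case analysis for the pointwise bound is correct and tracks the paper's: each sector of $\tilde{A}^c$ is matched against Lemmas \ref{lem:EasyNonResonance}, \ref{lem:NonResonant2HighFrequencies}, \ref{lem:NonResonant4HighFrequencies} to get a lower bound on $|\Omega_6|$ dominating the corresponding upper bound on $|M_6|$. But the lemma asserts the \emph{size and regularity} estimate in the sense of \eqref{eq:RegularityMBar}, i.e.\ not only $|M_6/\Omega_6|\lesssim 1$ but also the derivative bounds $|\partial_{\ell_j}^{\alpha}(M_6/\Omega_6)|\lesssim L_j^{-\alpha}$ for an extension of the multiplier from $\Gamma_6$ to $\R^6$, compatible with the (possibly finer-than-dyadic) interval localization. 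This is the part you dismiss as ``bookkeeping,'' and it is precisely the part the paper identifies as the new content relative to Li--Wu--Xu: the size estimates are essentially theirs, the regularity is what must be supplied so that the Fourier series expansion \eqref{eq:FourierSeriesExpansion}--\eqref{eq:EstimateFourierCoefficients} and the reduction \eqref{eq:FourierSeriesExpansionNonResonance} can be run on the non-resonant term at all.

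Concretely, two ingredients are missing. First, the derivatives of $\Omega_6^{-1}$ must be controlled via the inductive representation \eqref{eq:RepresentationDerivativesOmega}, which shows that the worst case is when all derivatives fall on $\Omega_6^{-k}$ and yields, e.g., $|\partial_{\xi_1}^{\alpha}(M_6/\Omega_6)|\lesssim N_3^{-\alpha}$ or $\lesssim N_{12}^{-\alpha}$ — bounds that are only acceptable because $k_1,k_2$ have been localized to intervals of that same length. Second, the extension off the hyperplane $\Gamma_6$ is not canonical and the trivial extension does not always satisfy the required lower bound and derivative bounds: in the case $N_1\sim N_2\gtrsim N_3^*\gg N_4^*$ the paper must rewrite $\Omega_6=-2k_1k_3-k_3(k_4+k_5+k_6)-(k_1-k_2)(k_4+\dots+k_6)-k_4^2+k_5^2-k_6^2$ using the convolution constraint and take this as the extension $\tilde{\Omega}_6$; similarly in the four-comparable-frequencies case one takes the extension dictated by \eqref{eq:ResonanceExtension} when $k_2,k_4,k_6$ share a sign. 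Your argument, which works only on $\Gamma_6$ and only with absolute values, does not produce these extensions or verify their derivative bounds, so as written it does not yield a multiplier to which Remark \ref{rem:FourierSeriesArgument} applies.
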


\begin{proof}
This follows from Case-by-Case analysis.

\textbf{Case:} $N_1 \gg N_2$. In this non-resonant case we do not have a finer frequency decomposition than on the dyadic scale: Note that
\begin{equation*}
| \partial_{\xi_i}^\alpha M_6(\xi_1,\ldots,\xi_6)| \lesssim m^2(N_i) N_i^{2-\alpha}.
\end{equation*}
Secondly, note the representation
\begin{equation}
\label{eq:RepresentationDerivativesOmega}
\partial_{\xi_i}^\alpha \Omega_6^{-1} = \sum_{j} \kappa_j \frac{\xi_i^{\gamma_j}}{\Omega_6^{\beta_j}}
\end{equation}
with $2 \beta_j - \gamma_j \geq \alpha + 2$. This follows from induction on $\alpha$. Consequently,
\begin{equation*}
|\partial_{\xi_i}^\alpha \Omega_6^{-1} | \lesssim \Omega_6^{-1} / N_i^\alpha.
\end{equation*}

\textbf{Case:} $N_1 \sim N_2 \gtrsim N_3^* \gg N_4^*$. We suppose by symmetry that $N_3^* = N_3$. We carry out an additional almost orthogonal decomposition of frequency ranges $|k_i| \sim N_i$, $i=1,2$ into intervals of size $N_3^*$ (if $N_3^* \ll N_1^*$). Above we had shown
\begin{equation*}
|\Omega_6(k_1,\ldots,k_6)| \gtrsim N_1 N_3 \text{ for } (k_1,\ldots,k_6) \in \Gamma_6.
\end{equation*}
We choose the following extension from $\Gamma_6$ to $\R^6$:
\begin{equation*}
\begin{split}
\Omega_6(k_1,\ldots,k_6) &= (k_1-k_2)(k_1+k_2) + k_3^2 - k_4^2 + k_5^2 - k_6^2 \\
&= -(k_1-k_2) (k_3+\ldots+k_6) + k_3^2 - k_4^2 +k_5^2 -k_6^2 \\
&= k_3 (k_3 + k_2 - k_1) - (k_1 - k_2)(k_4 + \ldots + k_6) - k_4^2 +k_5^2 -k_6^2 \\
&= -2 k_1 k_3 - k_3 (k_4 + k_5 + k_6) - (k_1 - k_2)(k_4 + \ldots + k_6) - k_4^2 +k_5^2 -k_6^2.
\end{split}
\end{equation*}
This extension denoted by $\tilde{\Omega}_6(k_1,\ldots,k_6)$ satisfies favorable bounds and also the estimates for the derivatives are satisfied.

Secondly, we recall the estimate
\begin{equation*}
|M_6| \lesssim m(N_1^*) N_1^* m(N_3^*) N_3^*.
\end{equation*}
We have
\begin{equation*}
\begin{split}
\big| \partial_{\xi_1} \big( \frac{M_6}{\tilde{\Omega_6}} \big) \big| &\lesssim \big| \frac{\partial_{\xi_1} M_6}{\tilde{\Omega}_6} \big| + \big| \frac{M_6}{\tilde{\Omega}_6^2} \partial_{\xi_1} \tilde{\Omega}_6 \big| \\
&\lesssim \frac{m^2(N_1) N_1}{N_1 N_3} + \frac{m(N_1) m(N_3)}{N_1 N_3} N_3 \lesssim N_3^{-1}.
\end{split}
\end{equation*}
The general claim follows from \eqref{eq:RepresentationDerivativesOmega}. 
Note that in the worst case the derivatives hit $\tilde{\Omega}_6^{-1}$ successively. This gives the bound
\begin{equation*}
\big| \partial_{\xi_1}^\alpha \frac{M_6}{\Omega_6} \big| \lesssim N_3^{-\alpha}.
\end{equation*}

\textbf{Case:} $N_1^* \sim N_2^* \gg N_3^* \sim N_4^*$. Recall that $N_i^* = N_i$ for $i=1,\ldots,4$, $k_1$ and $k_2$ are of different signs and $|k_1+k_2| \gg \frac{(N_3^*)^2}{N_1^*}$. Let $|k_1+k_2| \sim N_{12}$. This allows us an additional almost orthogonal decomposition of frequency ranges $|k_i| \sim N_i$ into intervals of size $N_{12}$. We have
\begin{equation*}
|\Omega_6| \gtrsim N_1 N_{12}, \quad |M_6| \lesssim m^2(N_1^*) N_1^* N_{12} + m^2(N_3^*) (N_3^*)^2.
\end{equation*}
This shows the size estimate. For the regularity estimate for $\xi_1$ and $\xi_2$, we use again \eqref{eq:RepresentationDerivativesOmega} and note that the critical case is when the derivatives hit $\Omega_6^{-k}$ in the denominator: The resulting term has the form
\begin{equation*}
\big| \Omega_6^{-(\alpha+1)} \xi_1^\alpha \big| \lesssim \frac{1}{N_{12}^{\alpha+1} N_1},
\end{equation*}
which is acceptable, since we have reduced the support of $k_1$ and $k_2$ to intervals of size $N_{12}$.

\textbf{Case:} $N_1^* \sim N_4^* \gg N_5^*$. In the non-resonant case we have the negation of $(*)(ii)$. If $k_2$, $k_4$, $k_6$ are of the same sign (or similarly, $k_1$, $k_3$, $k_5$), then we choose the extension described by \eqref{eq:ResonanceExtension}. For the remaining cases arising from the negation of $(*)(ii)$ we choose the trivial extension of $\Omega_6$. Then we have $|\Omega_6| \gtrsim (N_1^*)^2$ and $|M_6| \lesssim m^2(N_1) (N_1^*)^2$. With these at hand and the suitablve extension, it becomes straight-forward to check the size and regularity.
\end{proof}

For the sake of completeness, we record the following trivial size and regularity estimate for $\sigma_6 = m(k_1) \ldots m(k_6)$:
\begin{proposition}
\label{prop:Sigma6SizeRegularity}
We have the following size and regularity estimates for $\sigma_6: \Gamma_6 \to \C$. Suppose that $|k_i| \sim N_i$, then the trivial extension of $\sigma_6$ to $\R^6$ satisfies the size and regularity estimates with:
\begin{equation*}
|\sigma_6(\xi_1,\ldots,\xi_6)| \lesssim 1.
\end{equation*}
\end{proposition}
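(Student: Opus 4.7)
The plan is to verify the pointwise bound and then the regularity estimates \eqref{eq:RegularityMBar} directly from the definition of $m$. Since $m(\xi)$ equals $1$ for $|\xi|\leq 1$, equals $|\xi|^{-1}$ for $|\xi|\geq 2$, and is smoothly interpolated in between, we have $0 \leq m(\xi) \leq 1$ uniformly. Hence the trivial extension of $\sigma_6 = \tfrac16 m(k_1)\cdots m(k_6)$ to $\R^6$ satisfies $|\sigma_6(\xi_1,\ldots,\xi_6)| \leq \tfrac16$, which is the claimed size estimate.

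For the regularity estimate, first I would record that $m$ is a classical Mikhlin-type symbol in one variable: in the region $|\xi|\geq 2$ a direct calculation gives $|\partial_\xi^\alpha m(\xi)| = |\partial_\xi^\alpha |\xi|^{-1}| \lesssim_\alpha |\xi|^{-1-\alpha}$, while for $|\xi|\leq 1$ the derivatives vanish, and in the transition zone $1\leq |\xi|\leq 2$ the smoothness of the interpolant gives uniformly bounded derivatives. Combining these, for $|\xi|\sim N$ with $N\in 2^{\N_0}$ one has
\begin{equation*}
|\partial_\xi^\alpha m(\xi)| \lesssim_\alpha m(N)\, N^{-\alpha} \lesssim N^{-\alpha},\qquad 0\leq \alpha \leq 15.
\end{equation*}

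The proposition then follows from the Leibniz rule applied to the product $m(\xi_1)\cdots m(\xi_6)$: since differentiation in $\xi_j$ only touches the factor $m(\xi_j)$, and since $|\xi_j|\sim N_j$ on the support under consideration, we obtain
\begin{equation*}
|\partial_{\xi_j}^\alpha \sigma_6(\xi_1,\ldots,\xi_6)| \lesssim N_j^{-\alpha} \prod_{i\neq j} m(N_i) \lesssim N_j^{-\alpha}.
\end{equation*}
Taking $\overline{M}_6(I_1,\ldots,I_6)=1$ and $L_j \sim N_j$ (no finer localization is imposed here), this is exactly \eqref{eq:RegularityMBar} for $0\leq |\alpha|\leq 15$. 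There is no real obstacle in this argument; the only point that deserves attention is that the estimate is stated in the form needed by the Fourier series argument of Remark \ref{rem:FourierSeriesArgument}, so one must be careful to match $L_j$ with the relevant dyadic scale $N_j$. This completes the proof.
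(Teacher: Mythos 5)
Your proposal is correct and follows the only natural route; the paper in fact records this proposition without proof as a triviality, and your verification (boundedness $0\leq m\leq 1$ for the size estimate, the Mikhlin-type bounds $|\partial_\xi^\alpha m(\xi)|\lesssim m(N)N^{-\alpha}$ plus the fact that $\partial_{\xi_j}$ only hits the single factor $m(\xi_j)$ for the regularity estimate) is exactly the standard argument being implicitly invoked. Your closing remark that the bound $N_j^{-\alpha}$ also suffices for any finer localization $L_j\lesssim N_j$, since the required right-hand side $L_j^{-\alpha}$ only gets larger, is a correct and worthwhile observation.
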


We obtain in the non-resonant case for $k_i \in I_i$, denoting intervals taking into account possibly finer than dyadic frequency localization:
\begin{equation}
\label{eq:FourierSeriesExpansionNonResonance}
\begin{split}
&\quad \frac{M_6}{\Omega_6}(k_{11} + \ldots+ k_{15},k_2,\ldots,k_6) \\
&= \frac{1}{L_1 \ldots L_6} \sum_{\xi_1,\ldots,\xi_6 \in \Z} m_6(\frac{\xi_1}{L_1},\ldots,\frac{\xi_6}{L_6}) e^{i \frac{(k_{11}+\ldots+k_{15}) \xi_1}{L_1}} \ldots e^{i \frac{k_6 \xi_6}{L_6}}.
\end{split}
\end{equation}

We obtain for the modified energy (here we only consider integration by parts with respect to the first function and disregard possible complex conjugation since the following estimates are independent thereof):
\begin{equation}
\label{eq:ModifiedEnergyIntermediate}
\begin{split}
&\quad \int_0^t \sum_{\substack{k_1 + \ldots + k_6 = 0, \\ k_{11} + k_{12} + \ldots + k_{15} = k_1 }} \frac{M_6}{\Omega_6}(k_{11} + \ldots + k_{15},k_2,\ldots,k_6) \\
&\times \chi_{I_1}(k_{11}+\ldots+k_{15}) \hat{u}(t,k_{11}) \ldots \hat{u}(t,k_{15}) \chi_{I_2}(k_2) \hat{u}(t,k_2) \ldots \chi_{I_6}(k_6) \hat{u}(t,k_6) ds.
\end{split}
\end{equation}
We carry out an additional dyadic decomposition $|k_{1j}| \sim N_{1j}$. Redenoting 
\begin{equation*}
N_{11}, \ldots, N_{15}, N_2, \ldots, N_6 \to K_1,\ldots,K_{10}
\end{equation*}
we have $K_1^* \sim K_2^* \gtrsim N$, where $K_i^*$ denotes a decreasing rearrangement of $K_1,\ldots,K_{10}$.

We plug in the Fourier series expansion \eqref{eq:FourierSeriesExpansion} and Fourier inversion for $\chi_{I_1}$:
\begin{equation}
\label{eq:FourierInversionFrequencyLocalization}
\chi_{I_1}(k_{11} + \ldots + k_{15}) = \int e^{-i(k_{11} + \ldots + k_{15}) \xi} a_{I_1}(\xi) d\xi
\end{equation}
with $\| a_{I_1} \|_{L^1} \lesssim 1$ independent of $I_1$. 

By Fourier inversion in \eqref{eq:ModifiedEnergyIntermediate}, the size and regularity estimates provided by Lemma \ref{lem:NonResonantSizeRegularity}, and \eqref{eq:FourierInversionFrequencyLocalization}, which allows us to consider functions $\hat{u}(k_{11},t), \ldots, \hat{u}(k_{15},t)$ separately, we see it suffices to estimate
\begin{equation}
\big| \int_{0}^T \int_{\T_\lambda} u_1(t,x) \ldots u_{10}(t,x) dx dt \big| \lesssim N^{-3+} \prod_{i=1}^{10} \| m(D) u_i \|_{Y^1_{(i),T}}
\end{equation}
for any choice of $i \in \{ + , - \}$ and where we can assume that 
\begin{equation*}
\text{supp}( \hat{u}_i(t,\cdot)) \subseteq A_{K_i}.
\end{equation*}
Moreover, we suppose by Littlewood-Paley dichotomy that
\begin{equation}
\label{eq:SizeConditionFrequencies} 
K_1^* \sim K_2^* \gtrsim N.
\end{equation}

The key decay estimate is proved in the following proposition:
\begin{proposition}
\label{prop:DecayEstimateNonResonant1d}
Suppose that $0<T\leq \frac{\lambda}{N}$. Then the following estimate holds:
\begin{equation*}
\big| \int_0^T \Lambda_{10}( \overline{M}_{10}) ds \big| \lesssim N^{-3+} \| I u \|_{Y_T^1}^{10}.
\end{equation*}
\end{proposition}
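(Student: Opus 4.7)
The plan is to run the Fourier series machinery of Subsection~\ref{subsection:NonresonantEstimates} in exact parallel to the six-linear resonant estimate of Proposition~\ref{prop:EnergyGrowthResonant1d}. Since $\overline{M}_{10}=i\sum_{j=1}^6(-1)^j(X_j(\sigma_6)+X_j(\tilde\sigma_6))$, I first note that the two parent multipliers $\sigma_6$ (bounded by Proposition~\ref{prop:Sigma6SizeRegularity}) and $\tilde\sigma_6=-\tilde M_6/\alpha_6$ (bounded in the non-resonant region by Lemma~\ref{lem:NonResonantSizeRegularity}) both admit the Fourier series expansion \eqref{eq:FourierSeriesExpansionNonResonance}. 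Combining with Remark~\ref{rem:FourierSeriesArgument} and the interval cut-off identity \eqref{eq:FourierInversionFrequencyLocalization}, this reduces the claim to the translation-invariant decuplinear estimate
\begin{equation*}
\Big|\int_0^T\!\!\int_{\T_\lambda}u_1\cdots u_{10}\,dx\,dt\Big|\lesssim N^{-3+}(K_1^*)^{0-}\prod_{i=1}^{10}\|u_i\|_{Y^0_{i,T}}
\end{equation*}
under dyadic localization $|k_i|\sim K_i$ and the frequency constraint \eqref{eq:SizeConditionFrequencies} of $K_1^*\sim K_2^*\gtrsim N$ inherited from the two high modes feeding the correction term.

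For this core estimate I would divide through by the $I$-weight $\prod_i\langle k_i\rangle m(k_i)$ and apply H\"older's inequality in the shape $L^2_{t,x}\cdot L^2_{t,x}\cdot(L^\infty_{t,x})^8$. Two bilinear Strichartz estimates from Proposition~\ref{prop:BilinearStrichartzV2} handle the pair carrying the two highest frequencies and a further well-separated pair, contributing $(K_1^*)^{-1/2}$ and $N^{-1/2}$ (or $(K_3^*)^{-1/2}$) respectively, while the remaining eight factors are absorbed in $L^\infty_{t,x}$ via $\|J_x^{-1}u_i\|_{L^\infty_{t,x}}\lesssim\|u_i\|_{Y^0}$ with only logarithmic losses in $\lambda$. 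Combined with the factor $N^{-2(1-s)}(K_1^*)^{-2s}$ supplied by the two top $I$-weight entries and the time length $T\lesssim\lambda/N$, the threshold $s>1/3$ yields exactly the $N^{-3+}$ gain with a summable tail $(K_1^*)^{0-}$. The dyadic sum over the other eight $K_i$'s is automatic because they are capped at $\lesssim K_1^*$ and, after the $I$-weight is consumed, cost no more than logarithms.

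The main obstacle I anticipate is not the power counting---which is in fact easier than in the six-linear resonant case since eight derivatives can be cheaply absorbed on low-frequency factors---but rather the sign/separation case analysis needed to deploy two bilinear Strichartz estimates. Because each $X_j$ replaces one argument of $\sigma_6$ or $\tilde\sigma_6$ by a sum of five frequencies, the two highest $K_i$'s may fall in the same five-tuple group or in distinct groups, and in each configuration one must identify a pair whose frequency supports are separated by $\gtrsim K_1^*$ and a further pair separated by $\gtrsim N$. This is exactly the subcase split $A_{3211}$--$A_{3214}$ appearing in the proof of Proposition~\ref{prop:EnergyGrowthResonant1d}, and I would resolve the pathological low-separation configurations (the analogues of $A_{3214a}$ and $A_{3214b}$) by introducing an auxiliary bilinear Fourier localization of scale $|k_i+k_j|\sim N_{ij}$ and summing over $N_{ij}$ using the $s>1/3$ threshold exactly as in the six-linear analysis.
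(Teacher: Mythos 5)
Your overall architecture is the same as the paper's: reduce, via the Fourier-series expansion and the size/regularity bounds on $\sigma_6$ and $\tilde\sigma_6$ (Proposition~\ref{prop:Sigma6SizeRegularity}, Lemma~\ref{lem:NonResonantSizeRegularity}), to a translation-invariant decuplinear estimate under $K_1^*\sim K_2^*\gtrsim N$, then close with H\"older, Strichartz estimates on the time scale $\lambda/N$, and the $I$-weights. However, your concrete pairing strategy has a gap. You propose to put a bilinear Strichartz estimate on ``the pair carrying the two highest frequencies.'' Since $K_1^*\sim K_2^*$, that pair need not be frequency-separated: if, say, the two top factors occur as $u$ and $\overline{u}$ with frequencies both near $+K_1^*$ and $|k_1^*-k_2^*|$ small, neither Proposition~\ref{prop:BilinearStrichartzV2} nor its conjugated variant gives any gain. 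You anticipate this and propose to import the $A_{3211}$--$A_{3214}$ sign analysis plus a bilinear localization $|k_i+k_j|\sim N_{ij}$, but that rescue relied, in the degenerate subcase $A_{3214a}$ of the six-linear resonant estimate, on the \emph{improved multiplier bound} $\overline{M}_6\lesssim m(N_1^*)N_1^*N_5^*$ on the localized region; $\overline{M}_{10}$ has no such structure, being merely $O(1)$. So as $N_{ij}$ shrinks the bilinear gain evaporates with nothing to compensate, and in the configuration ``two near-coincident top frequencies, all other eight frequencies $\ll N$'' your scheme produces only one factor $N^{-1/2}$ on top of the $N^{-2+}$ from the two top $I$-weights, i.e.\ $N^{-5/2}$, short of $N^{-3+}$. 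The paper sidesteps all of this by never pairing the two highest frequencies with each other: it pairs each top frequency with a \emph{low} one ($P_{K_1}u\,P_{K_3}u$ and $P_{K_2}u\,P_{K_4}u$ with $K_3,K_4\ll N\lesssim K_1\sim K_2$), so separation $\gtrsim K_1^*$ is automatic and no sign analysis is needed; and when four or more frequencies are $\gtrsim N$ it abandons bilinear estimates entirely in favour of six $L^6_{t,x}$ estimates, the point being that every additional high frequency contributes an extra $N^{-(1-s)}$ through its $I$-weight.

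A secondary correction: on the interval $[0,\lambda/N]$ the bilinear gain from Proposition~\ref{prop:BilinearStrichartzV2} is $\big(\tfrac1M+\tfrac1N\big)^{1/2}$, which saturates at $N^{-1/2}$ even when the separation $M\sim K_1^*\gg N$; your claimed factor $(K_1^*)^{-1/2}$ from the top pair is therefore not available. This does not break the final power count (the paper only ever uses $N^{-1/2}$ per bilinear pair, with the remaining $N^{-2}$ coming from the two top $I$-weights), but the bookkeeping should be stated accordingly.
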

\begin{proof}
With the above reductions, we can focus on the estimate
\begin{equation*}
\big| \int_0^T \int_{\T_\lambda} P_{K_1} u(t,x) \ldots P_{K_{10}} u(t,x) dx ds \big| \lesssim N^{-3+} (K_1^*)^{0-} \prod_{i=1}^{10} \| P_{K_i} I u \|_{Y_{i,T}^1}.
\end{equation*}
To ease notation, suppose that $K_i = K_i^*$.

\medskip

\textbf{Case } $K_1^* \sim K_2^* \gtrsim N$ and $K_3^* \ll N$. Then we can use two bilinear Strichartz estimates on $P_{K_1} u P_{K_3} u$ and $P_{K_2} u P_{K_4} u$ and estimate the remaining terms in $L^\infty_{t,x}$:
\begin{equation*}
\begin{split}
&\quad \big| \int_0^T \int_{\T_\lambda} P_{K_1} u(t,x) \ldots P_{K_{10}} u(t,x) dx ds \big| \\
&\lesssim \| P_{K_1} u P_{K_3} u \|_{L^2_{t,x}} \| P_{K_2} u P_{K_4} u \|_{L^2_{t,x}} \prod_{i=5}^{10} \| P_{K_i} u \|_{L^\infty_{t,x}} \\
&\lesssim \frac{\log(N)^2}{N} \| P_{K_1} u \|_{Y^0_{1,T}} \| P_{K_2} u \|_{Y^0_{2,T}} \prod_{i=3}^{10} K_i^{\frac{1}{2}} \| P_{K_i} u \|_{Y^0_{i,T}}. 
\end{split}
\end{equation*}
Taking the multiplier $m(K_i) K_i$ into account, we find
\begin{equation*}
\lesssim \frac{\log(N)^2}{N^3} K_1^{0-} \prod_{i=1}^{10} \| P_{K_i} I u \|_{Y^1_{i,T}}.
\end{equation*}

\medskip

\textbf{Case} $N_3^* \gtrsim N$ and $N_4^* \ll N$. In this case there are still two bilinear Strichartz estimates applicable such that we find the same estimate like above. This can be further improved properly taking into account the weight $m(K_i) K_i$.

\medskip

\textbf{Case} $N_4^* \gtrsim N$. In this case it suffices to apply six $L^6_{t,x}$-Strichartz estimates and four $L^\infty_{t,x}$-estimates to find
\begin{equation}
\label{eq:StrichartzNonresonant}
\begin{split}
\big| \int_0^T \int_{\T_\lambda} \prod_{i=1}^{10} P_{K_i} u \big| &\lesssim \prod_{i=1}^6 \| P_{K_i} u \|_{L^6_{t,x}} \prod_{i=7}^{10} \| P_{K_i} u \|_{L^\infty_{t,x}} \\
&\lesssim \log(K_1)^{20} (K_7 \ldots K_{10})^{\frac{1}{2}} \prod_{i=1}^{10} \| P_{K_i} u \|_{Y^0_{i,T}} .
\end{split}
\end{equation}
The weight
\begin{equation*}
\frac{1}{m(K_1) K_1} \ldots \frac{1}{m(K_{10}) K_{10}}
\end{equation*}
is least favorable when $K_{10} \gtrsim N$. In this case we have
\begin{equation*}
K_i m(K_i) = \big( \frac{N}{K_i} \big)^{1-s} K_i = N^{1-s} K_i^s
\end{equation*}
and for $\frac{1}{3}<s<1$:
\begin{equation*}
\frac{(K_7 \ldots K_{10})^{\frac{1}{2}}}{m(K_1) K_1 \ldots m(K_{10}) K_{10}} = N^{-4(1-s)} (K_1 \ldots K_4)^{-s} \underbrace{\big( \prod_{i=5}^{10} N^{-(1-s)} K_5^{-s} \big) (K_7 \ldots K_{10})^{\frac{1}{2}} }_{\lesssim 1}.
\end{equation*}
Taking \eqref{eq:StrichartzNonresonant} together with the above display yields
\begin{equation*}
\lesssim K_1^{0-} N^{-4+} \prod_{i=1}^{10} \| P_{K_i} I u \|_{Y^1_{i,T}},
\end{equation*}
which is sufficient.
\end{proof}

Finally, we show the estimate for the boundary term:
\begin{proposition}
\label{prop:BoundaryTerm1d}
Let $\frac{1}{3} < s< \frac{1}{2}$. Then there is $\delta(s)>0$ such that the estimate holds:
\begin{equation*}
\big| \Lambda_6 ( \tilde{\sigma}_6)(t) \big| \lesssim N^{-\delta} \| I u (t) \|_{H^1(\T_\lambda)}^6.
\end{equation*}
\end{proposition}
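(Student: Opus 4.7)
The plan is to exploit the fact that $\tilde\sigma_6 = -\tilde M_6/\alpha_6$ is essentially bounded on its support, so that at a fixed time the estimate reduces to a pure Hölder/Bernstein bound (no Strichartz estimates are needed). First decompose
\begin{equation*}
\tilde\sigma_6 = -\sigma_6 \,\chi_\Upsilon \; -\; \frac{M_6^1}{\alpha_6}\,\chi_{\tilde A^c}\chi_\Upsilon.
\end{equation*}
The first piece satisfies $|\sigma_6|\lesssim 1$ trivially, and the second satisfies $|M_6^1/\alpha_6|\lesssim 1$ by Lemma \ref{lem:NonResonantSizeRegularity} (noting $|\alpha_6|=|\Omega_6|$). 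Both admit the regularity estimate \eqref{eq:RegularityMBar} on the sub-dyadic frequency boxes prescribed by $\chi_\Upsilon$ and $\chi_{\tilde A^c}$ in Section \ref{subsection:ResonantRegion1d}, by the same case analysis underlying Proposition \ref{prop:SizeRegularityMBar} and Lemma \ref{lem:NonResonantSizeRegularity}.

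Next, perform a Littlewood--Paley decomposition $|k_j|\sim K_j$ and apply the Fourier series argument of Remark \ref{rem:FourierSeriesArgument} at fixed time (dropping the $ds$-integration). Using the rapid decay \eqref{eq:EstimateFourierCoefficients} of the Fourier coefficients together with Hölder's inequality, the problem reduces to bounding the fixed-time product integral
\begin{equation*}
\Bigl|\int_{\T_\lambda} P_{K_1}u(t,x)\cdots P_{K_6}u(t,x)\,dx\Bigr| \;\le\; \prod_{j=1}^6 \|P_{K_j} u(t)\|_{L^6(\T_\lambda)},
\end{equation*}
where the support condition from $\chi_\Upsilon$ forces $K_1^*\sim K_2^*\gtrsim N$.

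The final step is Bernstein plus the $I$-conversion: $\|P_K u(t)\|_{L^6}\lesssim K^{1/3}\|P_K u(t)\|_{L^2}$ and $\|P_K u(t)\|_{L^2}\lesssim (\langle K\rangle m(K))^{-1}\|P_K Iu(t)\|_{H^1}$. Writing $a_K:=\|P_K Iu(t)\|_{H^1}$ (so that $\|a_K\|_{\ell^2_K}\sim \|Iu(t)\|_{H^1}$) and $b_K:=(K^{2/3}m(K))^{-1}$, the remaining task is to control
\begin{equation*}
\sum_{\substack{K_1,\ldots,K_6\\ K_1^*\sim K_2^*\gtrsim N}} \prod_{j=1}^6 a_{K_j} b_{K_j}.
\end{equation*}
For $K\gtrsim N$ one has $b_K = N^{s-1} K^{1/3-s}$, which is decreasing in $K$ \emph{precisely} when $s>1/3$; in particular $\sup_{K\gtrsim N} b_K \lesssim N^{-2/3}$. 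For $K\lesssim N$ one has $b_K = K^{-2/3}$ and $\|b_K\|_{\ell^2_K}\lesssim 1$. Applying the supremum bound on the two enforced high-frequency factors and Cauchy--Schwarz on the remaining four summations yields
\begin{equation*}
|\Lambda_6(\tilde\sigma_6)(t)| \;\lesssim\; N^{-4/3}\,\|Iu(t)\|_{H^1(\T_\lambda)}^6,
\end{equation*}
so $\delta=4/3$ works.

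The main technical subtlety is the bookkeeping in verifying that $\tilde\sigma_6$ really has the regularity \eqref{eq:RegularityMBar} on all the sub-dyadic boxes in cases (i)--(iii) of the resonant region description, so that the Fourier coefficient decay \eqref{eq:EstimateFourierCoefficients} applies uniformly. However, this bookkeeping is essentially the one already carried out for $M_6^1/\Omega_6$ in Lemma \ref{lem:NonResonantSizeRegularity}, and the fixed-time nature of the boundary term makes the present estimate strictly easier than Propositions \ref{prop:EnergyGrowthResonant1d} and \ref{prop:DecayEstimateNonResonant1d}: no bilinear Strichartz estimates and no delicate balancing with the time-interval length $\lambda/N$ are needed here, as all decay comes from the $I$-multiplier and pure Bernstein gain.
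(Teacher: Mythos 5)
Your proof is correct and follows essentially the same route as the paper: reduce via the Fourier series argument to a fixed-time product integral, then apply H\"older and Bernstein and absorb the $m$-weights, with $s>\tfrac13$ entering at exactly the same bookkeeping step. The only (immaterial) difference is the H\"older split — you use six $L^6$ norms where the paper uses $L^2\times L^2\times (L^\infty)^4$ on the two highest frequencies versus the rest — and your explicit decomposition of $\tilde\sigma_6$ into the $\sigma_6$- and $M_6^1\chi_{\tilde A^c}/\Omega_6$-pieces makes precise what the paper leaves implicit.
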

\begin{proof}
The estimates will be very crude.
By Fourier series expansion, which is possible by Proposition \ref{prop:Sigma6SizeRegularity}, and Littlewood-Paley decomposition it suffices to estimate
\begin{equation*}
\big| \int_{\T_\lambda} P_{N_1} u_1(t,x) \ldots P_{N_6} u_6(t,x) dx \big| \lesssim N_1^{-\delta} \prod_{i=1}^6 N_i \| P_{N_i} m(D) u_i \|_{L^2(\T_\lambda)}
\end{equation*}
for $N_1 \geq N_2 \ldots \geq N_6$, $N_1 \sim N_2 \gtrsim N$. To show the above display, we use H\"older's inequality and obtain
\begin{equation}
\label{eq:BoundaryTerm1}
\begin{split}
&\quad \big| \int_{\T_\lambda} P_{N_1} u_1(t,x) \ldots P_{N_6} u_6(t,x) dx \big| \\
 &\lesssim \| P_{N_1} u_1 \|_{L^2} \| P_{N_2} u_2 \|_{L^2} \prod_{i=3}^6 \| P_{N_i} u_i \|_{L^\infty} \\
&\lesssim (N_3 \ldots N_6)^{\frac{1}{2}} N^{-2(1-s)} N_1^{-2s} (m(N_1) N_1 \| P_{N_1} u_1 \|_{L^2} m(N_2) N_2 \| P_{N_2} u_2 \|_{L^2} ) \prod_{i=3}^6 \| P_{N_i} u_i \|_{L^2}.
\end{split}
\end{equation}

To conclude the proof, we need to take into account the weight $m(N_3) N_3 \ldots m(N_6) N_6$, which is least favorable when $N_6 \gtrsim N$. In this case we still obtain
\begin{equation*}
(m(N_3) N_3 \ldots m(N_6) N_6)^{-1} = N^{-4(1-s)} (N_3^{-s} \ldots N_6^{-s}).
\end{equation*}
Consequently,
\begin{equation}
\label{eq:BoundaryTerm2}
\begin{split}
&\quad (N_3 \ldots N_6)^{\frac{1}{2}} \prod_{i=3}^6 \| P_{N_i} u_i \|_{L^2} \\
 &\lesssim (N_3 \ldots N_6)^{\frac{1}{2}} N^{-4(1-s)} (N_3^{-s} \ldots N_6^{-s}) \prod_{i=3}^6 N_i \| P_{N_i} m(D) u_i \|_{L^2} \\
&\lesssim N^{-4(1-s)} (N_3 \ldots N_6)^{\frac{1}{6}} \prod_{i=3}^6 N_i \| P_{N_i} m(D) u_i \|_{L^2}.
\end{split}
\end{equation}
Since $s>\frac{1}{3}$, plugging \eqref{eq:BoundaryTerm2} into \eqref{eq:BoundaryTerm1} yields
\begin{equation*}
\big| \int_{\T_\lambda} P_{N_1} u_1(t,x) \ldots P_{N_6} u_6(t,x) dx \big| \lesssim N^{-6(1-s)} N_1^{-\delta} \prod_{i=1}^6 N_i \| P_{N_i} m(D) u \|_{L^2(\T_\lambda)}.
\end{equation*}
\end{proof}

\section{Energy growth bounds in two dimensions}
\label{section:EnergyGrowthBounds2d}
\subsection{Estimate of the resonant contribution}

The first step is to show size and regularity estimates for $\overline{M_4}$. The following analysis remains fairly rough. To improve on this, one should look into a refined resonant decomposition of $M_4$.
In the following we suppose dyadic frequency localization
\begin{equation*}
|k_i| \sim N_i, \quad N_1 \geq N_3, \quad N_2 \geq N_4.
\end{equation*}

We show the following straight-forward size and regularity estimate:
\begin{lemma}
\label{lem:SizeRegularityResonantPart2d}
Suppose the frequencies are in dyadic ranges like above and $N_1 \sim N_2$. Then the following size and regularity estimate holds:
\begin{equation}
\label{eq:SizeRegularityResonantPart2d}
|\overline{M_4}| \lesssim m(N_1^*) N_1^* m(N_3^*) N_3^*.
\end{equation}
\end{lemma}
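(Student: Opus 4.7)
The plan is to use the support condition $\Upsilon\cap \tilde A^c$ together with the hypothesis $N_1\sim N_2$ to localize the two large frequencies to $k_1,k_2$, and then to extract cancellation between the corresponding two terms of $M_4$ via the mean value theorem. Under $N_1\ge N_3$, $N_2\ge N_4$, $N_1\sim N_2$, I would first observe that $\{N_1^*,N_2^*\}=\{N_1,N_2\}$ and $\{N_3^*,N_4^*\}=\{N_3,N_4\}$; in particular $|k_1|,|k_2|\sim N_1^*$ and $|k_3|,|k_4|\lesssim N_3^*$. I would then split according to whether $N_1^*\sim N_3^*$ or $N_1^*\gg N_3^*$.

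In the balanced regime $N_1^*\sim N_3^*$ no cancellation is required: each term in $M_4$ satisfies $|k_i|^2m^2(|k_i|)\lesssim (N_1^*m(N_1^*))^2\sim m(N_1^*)N_1^*\,m(N_3^*)N_3^*$, using that $r\mapsto rm(r)$ is non-decreasing. In the main regime $N_1^*\gg N_3^*$, the terms indexed by $k_3,k_4$ are again dispatched individually via $|k_i|^2m^2(|k_i|)\le (N_3^*m(N_3^*))^2\le m(N_1^*)N_1^*\,m(N_3^*)N_3^*$. For the high-frequency terms I would apply the mean value theorem to $f(r)=r^2m^2(r)=(rm(r))^2$, noting $(rm(r))'\lesssim m(r)$ so that $|f'(r)|\lesssim rm^2(r)\lesssim N_1^*m^2(N_1^*)$ on the annulus $r\sim N_1^*$. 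The convolution constraint yields $|k_1+k_2|=|k_3+k_4|\lesssim N_3^*$, and combining the identity $(|k_1|-|k_2|)(|k_1|+|k_2|)=(k_1-k_2)\cdot(k_1+k_2)$ with $|k_1|+|k_2|\sim N_1^*$ gives the scalar bound $||k_1|-|k_2||\lesssim N_3^*$. The mean value theorem then supplies
\[
\bigl|\,|k_1|^2m^2(|k_1|)-|k_2|^2m^2(|k_2|)\,\bigr|\lesssim N_1^*m^2(N_1^*)\cdot N_3^*,
\]
and since $m$ is non-increasing we have $m(N_1^*)\le m(N_3^*)$, so the right-hand side is bounded by $m(N_1^*)N_1^*\,m(N_3^*)N_3^*$, as required.

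The support condition $\tilde A^c$ plays only a passive role under the hypothesis $N_1\sim N_2$: it merely excludes degenerate configurations in which two widely separated frequencies could be comparable, which the assumption $N_1\sim N_2$ already rules out. The regularity estimates implicit in the phrase ``size and regularity'' would follow from the same argument applied to derivatives of an appropriate extension of $M_4$ from $\Gamma_4$ to $\mathbb{R}^4\times\mathbb{R}^4$, possibly after an additional almost-orthogonal subdivision of the annuli $|k_1|,|k_2|\sim N_1^*$ into boxes of scale $N_3^*$ to capture the finer frequency localization, in direct analogy with the 1D Proposition~\ref{prop:SizeRegularityMBar}. The only genuinely nontrivial step is the geometric bound $||k_1|-|k_2||\lesssim N_3^*$, which is exactly the place where the convolution constraint feeds into the scalar cancellation present in $M_4$.
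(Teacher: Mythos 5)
Your proposal is correct and follows essentially the same route as the paper: split into the balanced case $N_1^*\sim N_3^*$ (handled by monotonicity of $r\mapsto rm(r)$) and the case $N_1^*\gg N_3^*$, where the convolution constraint $|k_1+k_2|=|k_3+k_4|\lesssim N_3^*$ feeds into a mean value theorem argument for the cancellation between the $k_1$ and $k_2$ terms, with the almost-orthogonal subdivision into balls of size $N_3^*$ supplying the regularity. The only cosmetic difference is that you apply the one-dimensional mean value theorem to the radial profile $r\mapsto r^2m^2(r)$ after bounding $\bigl||k_1|-|k_2|\bigr|\lesssim N_3^*$, whereas the paper applies the multivariable mean value theorem to $f(\xi)=m^2(\xi)|\xi|^2$ along the segment from $k_1$ to $-k_2$; both exploit the same cancellation.
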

\begin{proof}
In case $N_1^* \sim N_3^*$ the claimed size and regularity estimate is immediate since $m^2(N_i) N_i^2 \lesssim m^2(N_1^*) (N_1^*)^2$ by monotonicity of the function $m(k) k$ for $k \in [1,\infty)$. In case $N_1^* \gg N_3^*$ we firstly decompose the frequency ranges of $k_1$ and $k_2$ (corresponding to $N_1^*$ and $N_2^*$) into balls of size $N_3^*$, which causes no summation loss by almost orthogonality. With $N_1^* \sim N_2^* \sim N_1 \sim N_2$ by our assumptions, we can use the mean-value theorem on $m^2(k_1) k_1^2 - m^2(k_2) (-k_2)^2$ to estimate
\begin{equation*}
|m^2(k_1) k_1^2 - m^2(-k_2) (-k_2)^2| \lesssim | \nabla f(\xi_*)| |k_1+k_2|
\end{equation*}
with $f(\xi) = m^2(\xi) |\xi|^2$. With $\xi_* \in \{ k_1 + t(-k_2-k_1) : t \in [0,1] \}$ the estimate $|\xi_*| \gtrsim N_1$ is evident (because by convolution constraint $|k_1+k_2| \lesssim N_3^* \ll N_1$).

This gives $|\nabla f(\xi_*)| \sim m^2(N_1^*) N_1^*$ and, in conclusion,
\begin{equation*}
|m^2(k_1) k_1^2 - m^2(-k_2) (-k_2)^2 | \lesssim m^2(N_1^*) N_1^* N_3^*.
\end{equation*}
Hence,
\begin{equation*}
|M_4| \lesssim m^2(N_1^*) N_1^* N_3^* + m^2(N_3^*) (N_3^*)^2 \lesssim m(N_1^*) m(N_3^*) N_1^* N_3^*.
\end{equation*}
The regularity of the trivial extension
\begin{equation*}
M_4^{\R}(\xi_1,\ldots,\xi_4) = m^2(\xi_1) \xi_1^2 - m^2(\xi_2) \xi_2^2 + m^2(\xi_3) \xi_3^2 - m^2(\xi_4) \xi_4^2
\end{equation*}
is checked like in Proposition \ref{prop:SizeRegularityMBar}. For the estimates in $\xi_1$ and $\xi_2$ we use the additional localization in Fourier space to argue
\begin{equation*}
|\partial_{\xi_i}^\alpha M_4| \lesssim m(N_1^*) N_1^* m(N_3^*) N_3^* (N_3^*)^{-\alpha} \text{ for } i=1,2.
\end{equation*}
\end{proof}

We are ready to prove the estimate for the energy growth of the resonant part:
\begin{proposition}
\label{prop:EnergyGrowthResonant2d}
Let $0 \leq T \leq \lambda^{-\delta}$ and $\frac{1}{2} < s \leq \frac{2}{3}$. Let $\lambda = N^{\frac{1-s}{s}}$. Then the following estimate holds for any $\varepsilon > 0$
\begin{equation}
\label{eq:EnergyGrowth2dResonant}
\big| \int_0^T \Lambda_4(\overline{M}_4) ds \big| \lesssim_\varepsilon N^{-1+\varepsilon} \lambda^{-\frac{1}{2}} \| I u \|^4_{Y^1_T}.
\end{equation}
\end{proposition}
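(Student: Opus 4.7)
The plan is to combine Lemma~\ref{lem:SizeRegularityResonantPart2d} with a Fourier series expansion in the spirit of Remark~\ref{rem:FourierSeriesArgument}, and then apply bilinear or $L^4$-Strichartz estimates according to the frequency configuration. After dyadic localization $|k_i| \sim N_i$ and an almost-orthogonal decomposition of the two highest-frequency modes into balls of size $N_3^*$ (causing no summation loss by \eqref{eq:DisjointYsProperty} and Cauchy--Schwarz), Lemma~\ref{lem:SizeRegularityResonantPart2d} together with the Fourier series expansion reduces matters to bounding
$$\Big| \int_0^T \int_{\T^2_\lambda} u_1 u_2 u_3 u_4 \, dx \, dt \Big|$$
with a scalar prefactor $m(N_1^*) N_1^* m(N_3^*) N_3^*$. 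Writing $\|P_{N_i} u\|_{Y^0_T} \sim [m(N_i) N_i]^{-1} \|P_{N_i} Iu\|_{Y^1_T}$ and noting that inside $\Upsilon \cap \tilde{A}^c$ the definition of $\tilde{A}$ forces $N_1 \sim N_2$ and hence $\{N_1^*,N_2^*\} = \{N_1,N_2\}$, the effective weight on the dyadic sum collapses to $[m(N_2^*) N_2^*\, m(N_4^*) N_4^*]^{-1}$.

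I would then split $\Upsilon \cap \tilde{A}^c$ into the natural sub-regimes: \textbf{(I)} the two-high/two-low case $N_1 \sim N_2 \gtrsim N \gg N_3,N_4$, and \textbf{(II)} the all-comparable case $N_1 \sim N_2 \sim N_3 \sim N_4 \gtrsim N$, with mixed configurations such as $N_1 \sim N_2 \sim N_3 \gg N_4$ treated separately. In case (I), Cauchy--Schwarz gives
$$\int_0^T \!\!\int_{\T^2_\lambda} u_1 u_2 u_3 u_4 \, dx\, dt \leq \|u_1 u_3\|_{L^2_{t,x}}\|u_2 u_4\|_{L^2_{t,x}},$$
and two applications of the bilinear Strichartz estimate \eqref{eq:2dBilinearEstimateTransfer} produce the gain $(1/\lambda + N_3/N_1)^{1/2}(1/\lambda + N_4/N_2)^{1/2}$. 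Expanding into the four summands and carrying out the dyadic sums, the limiting contribution comes from $(N_3 N_4/N_1 N_2)^{1/2}$: using $\sum_{N_3 \leq N_1} N_3^{1/2} \sim N_1^{1/2}$, the decreasing dyadic geometric sum $\sum_{N_4 \leq N_3} N_4^{-1/2} = O(1)$, and $\sum_{N_1 \geq N} [m(N_1) N_1]^{-1} N_1^{-1/2} \sim N^{-1/2-s}/N^{1-s}$, one arrives at a net bound $\lambda^\varepsilon N^{-3/2+\varepsilon}$, which equals $N^{-1+\varepsilon}\lambda^{-1/2}$ at the threshold $s = 1/2$ and is strictly stronger than the target for $s > 1/2$. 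The remaining three expansion terms give the same or smaller bound via similar but easier dyadic sums.

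In case (II), no useful bilinear separation is available, so I would apply four $L^4_{t,x}$-Strichartz estimates via \eqref{eq:2dLinearEstimateTransfer}. The size estimate becomes $(m(N_1^*) N_1^*)^2$, the effective weight reduces to $N^{-2(1-s)}(N_1^*)^{-2s}$, and the dyadic sum over $N_1^* \geq N$ yields $\lambda^{4\varepsilon} N^{-2}$, which is dominated by the target since $\lambda = N^{(1-s)/s} \leq N$ for $s \geq 1/2$. The mixed configurations $N_1 \sim N_2 \sim N_3 \gg N_4$ are handled by pairing one bilinear estimate on $(u_2, u_4)$ with two $L^4$-Strichartz estimates on $(u_1, u_3)$; the $\lambda^{-1/2}$ comes directly from the bilinear gain $(1/\lambda + N_4/N_2)^{1/2}$.

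The main obstacle is the term $(N_3 N_4/N_1 N_2)^{1/2}$ arising in case (I): it carries no $\lambda^{-1/2}$ factor on its own, and the dyadic sum collapses to exactly $N^{-3/2}$, matching $N^{-1}\lambda^{-1/2}$ precisely at $s = 1/2$. This is the precise source of the $s > 1/2$ threshold in the proposition, and the Strichartz-loss exponents $\varepsilon$ must be taken sufficiently small depending on the size of $s - 1/2$. A secondary technical point is that the regularity estimate $|\partial^\alpha \overline{M}_4| \lesssim (\mathrm{size})/(N_3^*)^\alpha$ from Lemma~\ref{lem:SizeRegularityResonantPart2d} requires the almost-orthogonal decomposition at scale $N_3^*$ to be carried out before invoking the Fourier series; without this refinement, the Fourier coefficients would decay too slowly to make the resulting $\xi$-sum absolutely convergent.
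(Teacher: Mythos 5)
Your proposal is correct and follows essentially the same strategy as the paper's proof: the size/regularity estimate for $\overline{M}_4$ from Lemma \ref{lem:SizeRegularityResonantPart2d} combined with the Fourier series reduction, then two bilinear Strichartz estimates when the two lower frequencies are separated from the top pair, degenerating to one bilinear plus two $L^4_{t,x}$ estimates and finally four $L^4_{t,x}$ estimates as more frequencies become comparable, with the dyadic sums closed by the $I$-weights exactly as you compute. The only quibble is that your explicit case list omits the intermediate regime $N \lesssim N_3^* \ll N_1^*$ (the paper organizes Cases B and C by comparing $N_3^*$ and $N_4^*$ to the threshold $N$ rather than to $N_1^*$), but your Case (I) argument applies there verbatim with the modified weight $m(N_4^*)N_4^*$, so nothing is lost.
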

\begin{proof}
Since the multiplier is otherwise trivial we can suppose that $N_1^* \sim N_2^* \gtrsim N$. By symmetry, we suppose like above $N_1 \sim N_2 \sim N_1^* \sim N_2^* \gg N_3^*$.

\medskip

\textbf{Case A }$N_3^* \ll N$. In this case the size and regularity estimate holds with $|M_4| \lesssim m(N_1) N_1 m(N_3^*) N_3^*$. In the following suppose that $N_3^* = N_3$ to ease notation. After using the Fourier series argument and changing to position space, we can use two bilinear Strichartz estimates (due to large frequency separation):
\begin{equation*}
\begin{split}
&\quad \big| \int_0^T \int_{\Gamma_4} M_4(k_1,\ldots,k_4) \prod_{i=1}^4 \chi_{I_i} \hat{u}_i(k_i) d\Gamma_4 ds \big| \\
&\lesssim m(N_1) N_1 m(N_3^*) N_3^* \big| \int_0^T \int_{\T^2_\gamma} u_1(t,x) \ldots u_4(t,x) dx dt \big| \\
&\lesssim_\varepsilon m(N_1) N_1 m(N_3^*) N_3^* (\lambda N_1)^\varepsilon \big( \frac{1}{\lambda} + \frac{N_3}{N_1} \big)^{\frac{1}{2}} \big( \frac{1}{\lambda} + \frac{N_4}{N_1} \big)^{\frac{1}{2}} \prod_{i=1}^4 \| P_{N_i} u \|_{Y^0_T}.
\end{split}
\end{equation*}
We analyze the expression depending on the size of $N_1$ relative to $N_3$ and $N_4$.

\textbf{Case AI} $N_3 \gtrsim N_1^{\frac{1}{2}}$. Note that for $s \in [\frac{1}{2},\frac{2}{3}]$, $\lambda^{-1} \lesssim N^{-\frac{1}{2}}$ and therefore, by $N \lesssim N_1$ and $N_3 \gtrsim N^{\frac{1}{2}}$, we have
\begin{equation*}
\frac{1}{\lambda} + \frac{N_3}{N_1} \lesssim \frac{1}{N^{\frac{1}{2}}} + \frac{N_3}{N} \lesssim \frac{N_3}{N}.
\end{equation*}

\smallskip

\emph{Subcase AI.i} $N_3 \sim N_4$. In this case we obtain
\begin{equation*}
\begin{split}
&\quad N_1^\varepsilon m(N_1) N_1 m(N_2) N_2 (N^{-(1-s)} N_2^{-s}) N_3 \frac{N_4}{N} \prod_{i=1}^4 \| P_{N_i} u_i \|_{Y^0_T} \\
&\lesssim N_1^{-\varepsilon} N^{-2+} \prod_{i=1}^4 m(N_i) N_i \| P_{N_i} u \|_{Y^0_T}.
\end{split}
\end{equation*}
This is sufficient.

\smallskip

\emph{Subcase AI.ii} $N_4 \ll N_3$. Then the above expression becomes
\begin{equation*}
\begin{split}
&\quad N_1^\varepsilon m(N_1) N_1 m(N_2) N_2 (N^{-(1-s)} N_2^{-s}) N_3 \big( \frac{N_3}{N} \big)^{\frac{1}{2}} \big(\lambda^{-1} + \frac{N_4}{N} \big)^{\frac{1}{2}} \\ &\lesssim N_1^{-\varepsilon} N^{-1+} \lambda^{-\frac{1}{2}} \prod_{i=1}^4 N_i m(N_i).
\end{split}
\end{equation*}
The ultimate step is clear if $\lambda^{-1} \geq N_4 / N$. If $N_4 / N \geq \lambda^{-1}$, we can verify that $(N_4 N)^{-\frac{1}{2}} \leq \lambda^{-\frac{1}{2}}$ for $s \in [\frac{1}{2},\frac{2}{3}]$.

\textbf{Case AII} $N_3 \lesssim N_1^{\frac{1}{2}}$. In this case we have
\begin{equation*}
\frac{1}{\lambda} + \frac{N_i}{N_1} \lesssim \frac{1}{N^{\frac{1}{2}}} \text{ for } i=3,4,
\end{equation*}
and we find
\begin{equation*}
N_1^\varepsilon m(N_1) N_1 m(N_2) N_2 (N^{-(1-s)} N_2^{-s}) N_3 \frac{1}{N^{\frac{1}{4}}} \big( \frac{N_4}{N} + \frac{1}{\lambda} \big)^{\frac{1}{2}} \lesssim N_1^{-\varepsilon} N^{-\frac{5}{4}+} \lambda^{-\frac{1}{2}} \prod_{i=1}^4 m(N_i) N_i,
\end{equation*}
which suffices.

\medskip

\textbf{Case B} $N_3^* \gtrsim N$ and $N_4^* \ll N$. In this case we can use one bilinear Strichartz estimate involving $u_{N_1^*}$ and $u_{N_4^*}$ and two $L^4_{t,x}$-Strichartz estimates to find by the Fourier series argument:
\begin{equation*}
\begin{split}
&\quad \big| \int_0^T \int_{\Gamma_4} \overline{M}_4(k_1,\ldots,k_4) \hat{u}_1(k_1) \ldots \hat{\overline{u}}(k_4) d\Gamma_4 ds \big| \\
&\lesssim m(N_1^*) N_1^* m(N_3^*) N_3^* \big| \int_0^T \int_{\T^2_\gamma} u_1 \ldots u_4(s,x) dx ds \big| \\
&\lesssim_\varepsilon N_1^\varepsilon m(N_1^*) N_1^* m(N_3^*) N_3^* \big( \frac{1}{\lambda} + \frac{N_4}{N_1} \big)^{\frac{1}{2}} \prod_{i=1}^4 \| u_i \|_{Y^0_T} \\
&\lesssim N_1^\varepsilon m(N_1) N_1 m(N_2) N_2 (N^{-(1-s)} N_2^{-s}) m(N_3) N_3 \frac{m(N_4) N_4}{\lambda^{\frac{1}{2}}} \prod_{i=1}^4 \| u_i \|_{Y^0_T} \\
&\lesssim N_1^{-\varepsilon} N^{-1+} \lambda^{-\frac{1}{2}} \prod_{i=1}^4 m(N_i) N_i \| u_i \|_{Y^0_T}.
\end{split}
\end{equation*}
This suffices.

\textbf{Case C } $N_4^* \gtrsim N$. In this case we infer from $L^4_{t,x}$-Strichartz estimates and the estimate $(m(N_i) N_i)^{-1} \lesssim N^{-(1-s)} N_i^{-s}$:
\begin{equation*}
\begin{split}
&\quad \big| \int_0^T \int_{\Gamma_4} \overline{M}_4(k_1,\ldots,k_4) \hat{u}_1(k_1) \ldots \hat{\overline{u}}(k_4) d\Gamma_4 ds \big| \\
&\lesssim_\varepsilon N_1^\varepsilon m(N_1^*) N_1^* m(N_3^*) N_3^* \prod_{i=1}^4 \| u_i \|_{Y^0_T} \\
&\lesssim N^{-2+} N_1^{0-} \prod_{i=1}^4 m(N_i) N_i \| u_i \|_{Y^0_T},
\end{split}
\end{equation*}
which suffices.
\end{proof}

\subsection{Estimate of the non-resonant contribution}

We turn to estimates for the boundary term. Under the dyadic frequency localization $N_1 \sim N_3 \gg N_3^*$ or $N_2 \sim N_4 \gg N_3^*$ we can use almost orthogonality to additionally localize the range of $k_1$ and $k_3$ to balls of size $N_3^*$.  Then we can argue like above to obtain the following:

\begin{lemma}
\label{lem:2dBoundaryTermSizeRegularity}
Let $N_1 \sim N_3 \gg N_3^*$ or $N_2 \sim N_4 \gg N_3^*$. Then it holds the size and regularity estimates:
\begin{equation*}
\big| \frac{M_4}{\Omega_4} \big| \lesssim 1.
\end{equation*}
\end{lemma}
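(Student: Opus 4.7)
The plan is to follow the structure of Lemma \ref{lem:NonResonantSizeRegularity}, although the two-dimensional geometry in fact simplifies the argument compared to its one-dimensional counterpart: the trivial extension of $\Omega_4$ will already suffice to give both the size and the (implicit) regularity estimates, without any bilinear refinement. By symmetry, suppose $N_1 \sim N_3 \gg N_3^*$; the case $N_2 \sim N_4 \gg N_3^*$ is handled analogously after relabeling. Then $N_1 \sim N_3 \sim N_1^* \sim N_2^*$ and $|k_2|, |k_4| \lesssim N_3^*$. The convolution constraint gives $|k_1+k_3| = |k_2+k_4| \lesssim N_3^*$, so by almost orthogonality $k_1$ and $k_3$ may be localized to balls in $\R^2$ of radius $L_1, L_3 \sim N_3^*$ without summation loss; the natural dyadic scales for $k_2, k_4$ are $L_2, L_4 \lesssim N_3^*$.

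Taking the trivial extension $\Omega_4^{\R}(\xi_1,\ldots,\xi_4) = |\xi_1|^2 - |\xi_2|^2 + |\xi_3|^2 - |\xi_4|^2$, one has on the relevant support that $|\xi_1|^2 + |\xi_3|^2 \sim (N_1^*)^2$ while $|\xi_2|^2 + |\xi_4|^2 \lesssim (N_3^*)^2 \ll (N_1^*)^2$, so
\begin{equation*}
|\Omega_4^{\R}| \geq (|\xi_1|^2 + |\xi_3|^2) - (|\xi_2|^2 + |\xi_4|^2) \gtrsim (N_1^*)^2
\end{equation*}
holds off $\Gamma_4$ as well. Combined with the crude bound $|M_4| \lesssim m^2(N_1^*)(N_1^*)^2$, this yields the pointwise size estimate $|M_4/\Omega_4^{\R}| \lesssim m^2(N_1^*) \lesssim 1$. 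The key point is that, in contrast with the one-dimensional case of Lemma \ref{lem:NonResonantSizeRegularity}, one does not need an extension tailored to exploit the Fourier localization $|k_1+k_3| \lesssim N_3^*$: the sum-of-squares geometry already forces $|\Omega_4^{\R}| \gtrsim (N_1^*)^2$ directly.

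For the regularity in the sense of (the two-dimensional analog of) \eqref{eq:RegularityMBar}, note that $\Omega_4^{\R}$ is quadratic in each block $\xi_i \in \R^2$, so a Fa\`a di Bruno calculation gives $|\partial^\alpha_{\xi_i}(\Omega_4^{\R})^{-1}| \lesssim (N_1^*)^{-|\alpha|-2}$ (the worst terms being those in which each factor $\partial_{\xi_i}\Omega_4^{\R}$ appearing is a second-order derivative producing a constant). Combined with the trivial $|\partial^\alpha_{\xi_i} M_4| \lesssim m^2(N_i) N_i^{2-|\alpha|}$ and the Leibniz rule, this produces $|\partial^\alpha_{\xi_i}(M_4/\Omega_4^{\R})| \lesssim L_i^{-|\alpha|}$: for $i=1,3$ one uses $L_i = N_3^* \leq N_1^*$ to pass from $(N_1^*)^{-|\alpha|}$ to $L_i^{-|\alpha|}$, and for $i=2,4$ one uses $L_i \sim N_i$ together with $N_i \leq N_1^*$ and $m^2(N_i) \leq 1$ to absorb powers of $N_i/N_1^*$.

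The main obstacle is purely the bookkeeping required to match the four different localization scales $L_1,\ldots,L_4$ against the derivatives in each block, but each individual estimate is elementary once the trivial extension is seen to do the job. Given these size and regularity bounds on $M_4/\Omega_4$, the Fourier series argument of Remark \ref{rem:FourierSeriesArgument} adapted to two dimensions (and applied to $\sigma_{4,2}$, which is proportional to $M_4/\Omega_4 \cdot \chi_{\tilde A}$) reduces the boundary estimate \eqref{eq:2dBoundaryTerm} to $L^2$- and $L^\infty$-bounds on dyadically localized pieces, which are straightforward by H\"older's inequality and the $I$-multiplier bookkeeping familiar from Proposition \ref{prop:BoundaryTerm1d}.
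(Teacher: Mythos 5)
Your proof is correct and takes essentially the route the paper intends (the paper only gestures at it with ``we can argue like above''): exploit the convolution constraint to localize $k_1,k_3$ to balls of size $N_3^*$, play $|\Omega_4|\gtrsim (N_1^*)^2$ against $|M_4|\lesssim m^2(N_1^*)(N_1^*)^2$, and verify the derivative bounds by Leibniz with the appropriate localization scales $L_i$. Your explicit observation that the trivial extension of $\Omega_4$ already satisfies the lower bound off $\Gamma_4$ --- because in this regime the two dominant squares carry the same sign, so no constraint-tailored extension as in the one-dimensional Lemma \ref{lem:NonResonantSizeRegularity} is needed --- is accurate and is the one point genuinely worth spelling out.
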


We are ready to estimate the boundary term:
\begin{lemma}
\label{lem:BoundaryTerm2d}
Let $\frac{1}{2} < s \leq \frac{2}{3}$. Then it holds for $\delta = \delta(s)$
\begin{equation}
\label{eq:BoundaryTermEstimate2d}
| \Lambda_4(\tilde{\sigma}_4)(t) | \lesssim N^{-\delta} \| I u (t) \|_{H^1(\T^2_\lambda)}^4.
\end{equation}
\end{lemma}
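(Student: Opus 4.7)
The proof will follow the template of Proposition~\ref{prop:BoundaryTerm1d}, adapted to the quartic setting in two space dimensions. I will begin by Littlewood--Paley decomposing with $|k_i|\sim N_i$ and using the symmetries of $\tilde{\sigma}_4$ to assume $N_1\geq N_3$, $N_2\geq N_4$, and $N_1\geq N_2$, so that the support condition built into $\tilde\sigma_4$ (the relevant region $\Upsilon$) forces $N_1^*\sim N_2^*\gtrsim N$. After these reductions it suffices to establish the dyadic estimate
\begin{equation*}
\Big|\int_{\T^2_\lambda}\prod_{i=1}^4 P_{N_i}u\,dx\Big|\lesssim N_1^{-\delta}\prod_{i=1}^4 N_i\|P_{N_i}m(D)u\|_{L^2(\T^2_\lambda)}
\end{equation*}
for some $\delta=\delta(s)>0$, which then sums to the desired bound via Cauchy--Schwarz in the dyadic parameters.

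Splitting $\tilde{\sigma}_4=\mp\sigma_4-\sigma_{4,2}$, the factor $\sigma_4=\tfrac14 m(k_1)\cdots m(k_4)$ is trivially bounded by $1$ with the obvious regularity of the form \eqref{eq:RegularityMBar}. The factor $\sigma_{4,2}$ is likewise controlled by $1$ in size and in regularity thanks to Lemma~\ref{lem:2dBoundaryTermSizeRegularity}, applied after the customary almost-orthogonal localization of the two highest frequencies into cubes of side length $N_3^*$ in the regime $N_1^*\gg N_3^*$. The Fourier series expansion from Remark~\ref{rem:FourierSeriesArgument} then reduces the boundary term, at a single time $t$, to the spatial four-linear pairing displayed above.

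For this pairing I would apply H\"older's inequality with $L^2\cdot L^2\cdot L^\infty\cdot L^\infty$, placing the two highest frequencies in $L^2$, together with the two-dimensional Bernstein bound $\|P_{N_i}u\|_{L^\infty}\lesssim N_i\|P_{N_i}u\|_{L^2}$. Converting to the $Iu$-norm via $\|P_N u\|_{L^2}\sim (Nm(N))^{-1}\|P_N Iu\|_{H^1}$ produces the weight
\begin{equation*}
\frac{1}{m(N_1)N_1\,m(N_2)N_2\,m(N_3)\,m(N_4)}.
\end{equation*}
The main obstacle, and the source of the constraint $s>\tfrac12$, is the configuration $N_1\sim N_2\sim N_3\sim N_4\gg N$: there $1/m(N_j)=(N_j/N)^{1-s}$ grows and must be absorbed by $1/(m(N_i)N_i)=N_i^{-s}N^{-(1-s)}$ for $i=1,2$, leaving a net exponent $2-4s$ in $N_1$ which is negative exactly when $s>\tfrac12$. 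All remaining regimes (one of $N_3$, $N_4$ below $N$) are strictly easier because $m$ saturates at $1$ on low frequencies, and the dyadic sums over $N_1\sim N_2$ and $N_3\geq N_4$ are absorbed by Cauchy--Schwarz against $\prod_i\|P_{N_i}Iu\|_{H^1}$.
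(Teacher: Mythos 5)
Your proposal is correct and follows essentially the same route as the paper: the Fourier series reduction via the boundedness of $\sigma_4$ and $\sigma_{4,2}$ (Lemma \ref{lem:2dBoundaryTermSizeRegularity}), H\"older with the two highest frequencies in $L^2$ plus two-dimensional Bernstein on the others, and then bookkeeping of the weight $\prod_i m(N_i)N_i$. Your identification of the worst configuration $N_1\sim N_2\sim N_3\sim N_4\gg N$ with net exponent $2-4s$ is exactly the paper's final case $N_4^*\gtrsim N$, where the restriction $s>\tfrac{1}{2}$ enters.
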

\begin{proof}
We denote $u_i^* = P_{N_i} u$. Using the Fourier series argument, it suffices to show
\begin{equation*}
\big| \int_{\T^2_\lambda} u_1^*(x) \ldots u_4^*(x) dx \big| \lesssim N_1^{-\varepsilon} N^{-\delta} \prod_{i=1}^4 m(N_i) N_i \| P_{N_i} u_i \|_{L^2(\T^2_\lambda)}.
\end{equation*}
We apply H\"older's and Bernstein's inequality to find
\begin{equation*}
\begin{split}
\big| \int_{\T^2_\lambda} u_1^*(x) \ldots u_4^*(x) dx \big| &\lesssim \| P_{N_1^*} u_1^* \|_{L^2(\T^2_\lambda)} \| P_{N_2^*} u_2^* \|_{L^2(\T^2_\lambda)} \| P_{N_3^*} u_3^* \|_{L^\infty(\T^2_\lambda)} \| P_{N_4^*} u_4^* \|_{L^\infty(\T^2_\lambda)} \\
&\lesssim N_3^* N_4^* \prod_{i=1}^4 \| P_{N_i} u_i \|_{L^2(\T^2_\lambda)}.
\end{split}
\end{equation*}

Suppose that $N_1^* \sim N_2^* \gtrsim N$ and $N_3^* \lesssim N$.
Since $(m(N_i^*) N_i^*)^{-1} \lesssim N^{-(1-s)} (N_1^*)^{-s}$ for $i=1,2$, we find
\begin{equation*}
\big| \int_{\T^2_\lambda} u_1^* \ldots u_4^* dx \big| \lesssim N^{-2+} (N_1^*)^{0-} \prod_{i=1}^4 m(N_i) N_i \| P_{N_i} u_i \|_{L^2(\T^2_\lambda)}.
\end{equation*}

If $N_3^* \gtrsim N$ and $N_4^* \ll N$, note that for $\frac{1}{3} < s \leq \frac{2}{3}$, we have
\begin{equation*}
N_3^* \lesssim N^{0-} \prod_{i=1}^3 m(N_i^*) N_i^*,
\end{equation*}
which finishes this case.

Finally, suppose that $N_4^* \gtrsim N$. 
Since $m(N_i^*) N_i^* = \frac{N^{1-s}}{(N_i^*)^{1-s}} N_i^* = N^{1-s} (N_i^*)^s$, we have for $s \in (\frac{1}{2},\frac{2}{3})$
\begin{equation*}
(N_i^*)^{\frac{1}{2}} \lesssim_s m(N_i^*) N_i^* N^{-\frac{1}{3}} (N_i^*)^{0-}.
\end{equation*}
Consequently,
\begin{equation*}
(N_3^*)^{\frac{3}{2}} (N_4^*)^{\frac{1}{2}} \lesssim N^{-\frac{4}{3}} (N_1^*)^{0-} \prod_{i=1}^4 m(N_i^*) N_i^*.
\end{equation*}
The proof is complete.
\end{proof}

\subsection{Estimate for the non-resonant part}

In the following we show a growth bound for the non-resonant part:

\begin{proposition}
\label{prop:NonResonantEstimate2d}
For $\frac{1}{2} < s \leq \frac{2}{3}$ the following estimate holds:
\begin{equation}
\label{eq:NonResonantEstimate2d}
\big| \int_0^T \Lambda_6(\overline{M}_6) dt \big| \lesssim N^{-2+} \prod_{i=1}^6 m(N_i) N_i \| u_i \|_{Y^0_T}.
\end{equation}
\end{proposition}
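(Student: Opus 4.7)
The plan is to exploit that $\overline{M}_6$ is a \emph{bounded} multiplier and then deploy bilinear and $L^4_{t,x}$ Strichartz estimates from \eqref{eq:2dBilinearEstimateTransfer} and \eqref{eq:2dLinearEstimateTransfer}. Boundedness of $\sigma_4 = \tfrac14 m(k_1)\cdots m(k_4)$ is immediate from $m \leq 1$. On the non-resonant region $\tilde{A}$ supporting $\sigma_{4,2}$ one has $|\Omega_4| \gtrsim (N_1^*)^2$ by construction, while Lemma \ref{lem:SizeRegularityResonantPart2d} gives $|M_4| \lesssim (N_1^*)^2$, so $|\sigma_{4,2}| \lesssim 1$ on $\tilde{A}$; since the symmetrisations $X_j$ only consolidate arguments into sums, $|\overline{M}_6(k_1,\ldots,k_6)| \lesssim 1$ uniformly, and analogous estimates hold for derivatives on frequency cubes of size $N_j^*$, so the Fourier-series expansion of Remark \ref{rem:FourierSeriesArgument} applies. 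Moreover the multiplier vanishes unless $N_1^* \sim N_2^* \gtrsim N$.

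After Fourier series expansion and dyadic localization $|k_i| \sim N_i$ (renamed so that $N_1 \geq \cdots \geq N_6$ with $N_1 \sim N_2 \gtrsim N$), the problem reduces to showing
\[
\bigl| \int_0^T\!\!\int_{\T^2_\lambda} \prod_{i=1}^6 P_{N_i} u_i\,dx\,dt\bigr|
\lesssim N^{-2+} N_1^{0-}\prod_{i=1}^6 m(N_i)N_i \|P_{N_i} u_i\|_{Y^0_T},
\]
since the extra $N_1^{0-}$ allows us to sum dyadically and convert the $m(N_i)N_i$ weights into $\|Iu\|_{Y^1_T}$. I split into three cases according to where $N$ falls in the ordered sequence. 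If $N_3 \ll N$, I apply the bilinear Strichartz estimate \eqref{eq:2dBilinearEstimateTransfer} twice, to the separated pairs $(P_{N_1}u, P_{N_3}u)$ and $(P_{N_2}u, P_{N_4}u)$, and Bernstein's inequality to bound $P_{N_5}u, P_{N_6}u$ in $L^\infty_{t,x}$. If $N_3 \gtrsim N \gg N_5$, I use one bilinear Strichartz on $(P_{N_1}u, P_{N_5}u)$, two $L^4_{t,x}$-Strichartz estimates on $P_{N_2}u, P_{N_4}u$, and Bernstein on the rest. If $N_5 \gtrsim N$, every weight $(m(N_i)N_i)^{-1} \sim N^{-(1-s)} N_i^{-s}$ is available, and four $L^4_{t,x}$-estimates plus two $L^\infty$-Bernstein estimates close the bound comfortably.

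The main obstacle is case~1 with $N_3, N_4$ barely below $N$: here the two bilinear Strichartz estimates yield gains of only $(\lambda^{-1} + N_3/N_1)^{1/2}(\lambda^{-1} + N_4/N_1)^{1/2}$, which in the worst case is $\lambda^{-1}$. Combined with the two $I$-weight savings $(m(N_1)N_1 \cdot m(N_2)N_2)^{-1} \sim N^{-2(1-s)} N_1^{-2s}$ and $\lambda \sim N^{(1-s)/s}$, one gets a total of $N^{-(1-s)(2+1/s)}$, which exceeds $N^{-2}$ \emph{precisely when} $s > 1/2$. This is why the regularity threshold $s > 1/2$ arises. The remaining cases carry substantial margin, and the verification of the regularity estimate \eqref{eq:RegularityMBar} for $\overline{M}_6$ on the appropriate cubes follows the pattern of Lemma \ref{lem:SizeRegularityResonantPart2d} and Lemma \ref{lem:2dBoundaryTermSizeRegularity}, using the additional almost-orthogonal decomposition of $k_1, k_2$ into balls of size $N_3^*$ when $N_1^* \gg N_3^*$.
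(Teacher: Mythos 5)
Your overall strategy matches the paper's: establish that $\sigma_4$ and $\sigma_{4,2}=M_4/\Omega_4\cdot\chi_{\tilde A}$ are bounded (with the regularity needed for the Fourier-series transference), reduce to a sextilinear integral with dyadically localized factors and $N_1\sim N_2\gtrsim N$, and close with Strichartz and Bernstein estimates against the weights $m(N_i)N_i$. The paper's case division is simpler than yours, though: it splits only on whether $N_5\ll N$ or $N_5\gtrsim N$, and in the first case uses \emph{four linear} $L^4_{t,x}$ estimates on $u_1,\dots,u_4$ plus Bernstein on $u_5,u_6$; no bilinear Strichartz estimate is needed anywhere in this proposition, because the two high-frequency weights alone give $(m(N_1)N_1\,m(N_2)N_2)^{-1}\lesssim N^{-2(1-s)}N_1^{-2s}\lesssim N^{-2+}N_1^{0-}$, which is already the full gain. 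Your extra bilinear estimates in the first two cases are harmless but buy nothing here (the place where bilinear estimates are genuinely needed is the \emph{resonant} quadrilinear term $\Lambda_4(\overline{M}_4)$, where the multiplier is large).

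There is, however, a concrete error in your discussion of the ``main obstacle.'' First, your worst-case bookkeeping drops the low-frequency weights: in Case~1 you must still divide by $m(N_3)N_3\,m(N_4)N_4=N_3N_4\geq 1$, and the quotient $(\lambda^{-1}+N_3/N_1)^{1/2}/N_3\leq 1$ always, so the case closes for \emph{every} $s\in(0,1)$ and is not critical at all. Second, the numerology you state is inverted: $(1-s)(2+\tfrac1s)\geq 2$ is equivalent to $2s^2+s-1\leq 0$, i.e.\ $s\leq\tfrac12$, so the coefficient $N^{-(1-s)(2+1/s)}$ decays \emph{slower} than $N^{-2}$ exactly when $s>\tfrac12$ --- in your own (incomplete) accounting this would mean the estimate \emph{fails} in the regime you claim it works, and in any event a coefficient ``exceeding $N^{-2}$'' is a defect, not a gain. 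The place where $s>\tfrac12$ is actually used is your Case~3 ($N_5\gtrsim N$): there $N_5N_6/(m(N_5)N_5\,m(N_6)N_6)\lesssim (N_1/N)^{2(1-s)}$ must be absorbed by $N^{-2(1-s)}N_1^{-2s}$, which requires $N_1^{2-4s}\leq N^{2-4s}$, i.e.\ $s\geq\tfrac12$. You should correct this paragraph; as written it misidentifies both the critical case and the origin of the threshold, even though the three cases you set up do in fact all close.
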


Recall that 
\begin{equation*}
\overline{M}_6 = \pm i( \sum_{j=1}^4 X_j(\sigma_4) + X_j(\sigma_{4,2})).
\end{equation*}
Since we have already showed the size and regularity estimate for
\begin{equation*}
\sigma_{4,2} = \frac{M_4}{\Omega_4} \cdot \chi_{\tilde{A}},
\end{equation*}
it remains to check the size and regularity estimate for
\begin{equation*}
\sigma_4 = m(k_1) \ldots m(k_4),
\end{equation*}
which is straight-forward and recorded in the following:

\begin{lemma}
$\sigma_4$ satisfies the size and regularity estimate:
\begin{equation*}
|\sigma_4| \lesssim 1.
\end{equation*}
\end{lemma}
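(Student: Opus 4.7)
The plan is to verify both the size estimate and the implicit regularity estimate directly from the definition of $m$. The argument is essentially trivial once the symbol bounds on $m$ are recorded.

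For the size estimate, recall that by construction $m$ is smooth, non-negative, equals $1$ for $|\xi| \leq 1$, and equals $|\xi|^{-1}$ for $|\xi| \geq 2$. Since $|\xi|^{-1} \leq 1$ in the latter region and the interpolation can be chosen non-increasing between the two regimes, we have $0 \leq m(\xi) \leq 1$ for all $\xi \in \R^d$. Consequently,
\begin{equation*}
|\sigma_4(k_1,\ldots,k_4)| = m(k_1) m(k_2) m(k_3) m(k_4) \leq 1,
\end{equation*}
which is the claimed bound. The trivial extension of $\sigma_4$ from $\Gamma_4$ to $(\R^2)^4$ just replaces the $k_i \in \Z^2_\lambda$ by $\xi_i \in \R^2$ and satisfies the same bound.

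For the regularity estimate (in the sense required for the Fourier series argument of Remark \ref{rem:FourierSeriesArgument}), I would rely on the standard symbol-type estimates for $m$: since $m$ is smooth, radial, and coincides with $|\xi|^{-1}$ for $|\xi| \geq 2$, one has
\begin{equation*}
|\partial^\alpha_\xi m(\xi)| \lesssim_\alpha \langle \xi \rangle^{-|\alpha|} m(\xi) \lesssim \langle \xi \rangle^{-|\alpha|}.
\end{equation*}
Applying Leibniz's rule to the product structure of $\sigma_4$ gives
\begin{equation*}
|\partial^\alpha_{\xi_i} \sigma_4(\xi_1,\ldots,\xi_4)| \lesssim_\alpha \langle \xi_i \rangle^{-|\alpha|} \lesssim N_i^{-|\alpha|}
\end{equation*}
on the support $|\xi_i| \sim N_i$. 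Since the blocks $I_i$ on which we further localize in the Fourier-series argument have side length $L_i \leq N_i$, one obtains the estimate $|\partial_{\xi_i}^\alpha \sigma_4| \lesssim L_i^{-|\alpha|}$, which together with the size bound $|\sigma_4| \lesssim 1$ gives the regularity estimate in the form used by \eqref{eq:RegularityMBar} with $|\overline{M}_4(I_1,\ldots,I_4)|$ replaced by the constant~$1$.

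There is no genuine obstacle here; the only step worth recording is the symbol bound $|\partial^\alpha m(\xi)| \lesssim \langle \xi \rangle^{-|\alpha|}$, after which both claims follow from Leibniz and trivial inequalities. The whole statement is included only to justify the subsequent application of the Fourier series expansion to $\sigma_4$ in the analysis of $\overline{M}_6$.
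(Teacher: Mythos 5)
Your proof is correct and supplies exactly the routine details the paper omits (the paper states this lemma without proof, calling it straightforward): the bound $0\le m\le 1$ gives the size estimate, and the symbol bounds $|\partial^\alpha m(\xi)|\lesssim \langle\xi\rangle^{-|\alpha|}$ together with Leibniz and $L_i\le N_i$ give the regularity estimate needed for the Fourier series argument. Nothing further is required.
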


This way we can apply the Fourier series argument to $\sigma_4$ and $\sigma_{4,2}$ to reduce the estimate \eqref{eq:NonResonantEstimate2d} to the following:
\begin{lemma}
Let $\frac{1}{2} < s < \frac{2}{3}$ and $T \lesssim 1$. Suppose that $u_i$ is frequency localized at $N_i$ and $N_1 \sim N_2 \gtrsim N_3 \ldots \gtrsim N_6$ with $N_1 \gtrsim N$. Then the following estimate holds:
\begin{equation*}
\big| \int_0^T \int_{\T^2_\gamma} u_1(t,x) \ldots u_6(t,x) dx dt \big| \lesssim N^{-2+} N_1^{0-} \prod_{i=1}^6 m(N_i) N_i \| u_i \|_{Y^0_T}.
\end{equation*}
\end{lemma}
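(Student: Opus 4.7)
The plan is to reduce the sextic spacetime integral to a H\"older product of two bilinear $L^2_{t,x}$ norms and two $L^\infty_{t,x}$ norms, then apply the bilinear Strichartz estimate \eqref{eq:2dBilinearEstimateTransfer} together with Bernstein's inequality. Pairing the two highest frequencies with the middle ones yields
\begin{equation*}
\Bigl|\int_0^T\!\!\int_{\T^2_\gamma} u_1\cdots u_6\,dx\,dt\Bigr| \leq \|u_1 u_3\|_{L^2_{t,x}}\,\|u_2 u_4\|_{L^2_{t,x}}\,\|u_5\|_{L^\infty_{t,x}}\,\|u_6\|_{L^\infty_{t,x}}.
\end{equation*}
When $N_3\ll N_1$, \eqref{eq:2dBilinearEstimateTransfer} delivers
\begin{equation*}
\|u_1 u_3\|_{L^2_{t,x}} \lesssim_\varepsilon (\lambda N_1)^\varepsilon \bigl(\lambda^{-1}+N_3/N_1\bigr)^{1/2}\|u_1\|_{Y^0_T}\|u_3\|_{Y^0_T},
\end{equation*}
and analogously for $\|u_2 u_4\|_{L^2}$ when $N_4\ll N_1$. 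In the complementary case $N_3\sim N_1$ (resp.\ $N_4\sim N_1$) I fall back to Cauchy--Schwarz with two copies of the $L^4_{t,x}$-Strichartz estimate \eqref{eq:2dLinearEstimateTransfer}, producing $\|u_1 u_3\|_{L^2_{t,x}}\lesssim_\varepsilon \lambda^\varepsilon\|u_1\|_{Y^0_T}\|u_3\|_{Y^0_T}$ at the cost of losing the separation factor. The $L^\infty$-pieces come from the embedding $Y^0_T\hookrightarrow L^\infty_t L^2_x$ combined with two-dimensional Bernstein: $\|u_j\|_{L^\infty_{t,x}}\lesssim N_j\|u_j\|_{Y^0_T}$ for $j=5,6$.

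The remaining task is to divide by $\prod_{i=1}^{6} m(N_i)N_i$ and verify the resulting quotient is $\lesssim N^{-2+}N_1^{0-}$. Recall $m(N_i)N_i = N_i$ for $N_i\lesssim N$ and $m(N_i)N_i \sim N^{1-s}N_i^s$ for $N_i\gtrsim N$, so $m(N_1)N_1\cdot m(N_2)N_2\sim N^{2(1-s)}N_1^{2s}$ by the hypothesis $N_1\sim N_2\gtrsim N$. I would split into subcases according to whether each of $N_3,N_4,N_5,N_6$ lies above or below $N$. For $N_j\gtrsim N$ one uses $N_j/(m(N_j)N_j) = (N_j/N)^{1-s}$ together with $N_j\leq N_1$; for $N_j\lesssim N$ the cost $N_j$ from Bernstein is absorbed by the fact that $N_j\leq N$. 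Since $s>1/2$, the exponent $2s>1$ in the denominator always dominates the $N_1^\varepsilon$ loss, producing the required $N_1^{0-}$.

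The tightest regime, which I expect to be the main obstacle, is $N_3\sim N_4\sim N_1$ with $N_5,N_6 = O(1)$: both bilinear estimates degenerate to $L^4\cdot L^4$ and both $L^\infty$ factors are $O(1)$, so the entire $N^{-2+}$ decay must come from the $I$-weights. A direct computation in this regime gives the budget $N_1^{\varepsilon-4s}N^{4s-4}$, which using $s>1/2$ and $N_1\gtrsim N$ is $\lesssim N^{-4+\varepsilon}$, comfortably below the target. All other subcases gain additional smallness from the separation factor $(\lambda^{-1}+N_j/N_1)^{1/2}$ or from the $(N_i/N)^{-s}$ contributions in the weights, so the residual work is organizational rather than analytic.
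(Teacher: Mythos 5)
Your argument is correct and, in its binding case, coincides with the paper's: the paper simply estimates $\prod_{i=1}^4 \| u_i\|_{L^4_{t,x}}\,\| u_5\|_{L^\infty_{t,x}}\|u_6\|_{L^\infty_{t,x}}$ with four linear $L^4_{t,x}$-Strichartz estimates \eqref{eq:2dLinearEstimateTransfer} and two-dimensional Bernstein, and then runs exactly the weight bookkeeping you describe. Your additional use of the bilinear estimate \eqref{eq:2dBilinearEstimateTransfer} on the pairs $u_1u_3$ and $u_2u_4$ is sound but not needed for the claimed $N^{-2+}$ decay; when the separation degenerates you fall back to $L^4\times L^4$, which is the paper's bound. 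One small correction to your accounting: the regime $N_3\sim N_4\sim N_1$, $N_5,N_6=O(1)$ is not the tight one --- there the budget is $N^{-4+}$, as you compute. The binding case is $N_5\sim N_6\gtrsim N$, where the Bernstein cost $N_5N_6$ against the weights gives
\begin{equation*}
\frac{N_5N_6}{\prod_{i=1}^6 m(N_i)N_i}\le N^{-2(1-s)}N_1^{-2s}\Bigl(\frac{N_1}{N}\Bigr)^{2(1-s)}=N^{-4(1-s)}N_1^{2-4s}\le N^{-2},
\end{equation*}
and only the strict inequality $s>\tfrac12$ leaves room to extract $N_1^{0-}$ and absorb the $(\lambda N_1)^\varepsilon$ losses. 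Your general casework (``for $N_j\gtrsim N$ use $N_j/(m(N_j)N_j)=(N_j/N)^{1-s}$'') does cover this, so the proof goes through; just be aware that this, not the degenerate-bilinear case, is where the estimate is sharp.
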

\begin{proof}
Suppose that $N_5 \ll N$. Then we can use four linear Strichartz estimates to find
\begin{equation*}
\begin{split}
\big| \int_0^T \int_{\T^2_\gamma} u_1(t,x) \ldots u_6(t,x) dx dt \big| &\lesssim \prod_{i=1}^4 \| u_i  \|_{L^4_{t,x}} \| u_5 \|_{L^\infty_{t,x}} \| u_6 \|_{L^\infty_{t,x}} \\
&\lesssim_\varepsilon (\lambda N_1 )^\varepsilon  N_5 N_6 \prod_{i=1}^6 \| u_i \|_{Y^0_T}.
\end{split}
\end{equation*}
Since $N_1 \sim N_2 \gtrsim N$, we have
\begin{equation*}
(m(N_1) N_1 m(N_2) N_2)^{-1} \lesssim N^{-2(1-s)} N_1^{-2s}
\end{equation*}
and thus,
\begin{equation*}
(\lambda N_1)^\varepsilon N_5 N_6 \prod_{i=1}^6 \| u_i \|_{Y^0_T} \lesssim N^{-2+} N_1^{0-} \prod_{i=1}^6 m(N_i) N_i \| u_i \|_{Y^0_T}.
\end{equation*}

Suppose that $N_5 \gtrsim N$ or $N_6 \gtrsim N$. We still find
\begin{equation*}
N_5 N_6 \lesssim m(N_5) N_5 m(N_6) N_6 \big( \prod_{i=1}^4 m(N_i) N_i \big) N_1^{0-} N^{-2+},
\end{equation*}
which allows us to conclude like above.
\end{proof}

\section{Conclusion of the proofs of Theorems \ref{thm:ImprovedGWPQuinticNLS} and \ref{thm:GWPNLS2d}}
\label{section:Conclusion}

\subsection{Proof of Theorem \ref{thm:ImprovedGWPQuinticNLS}}
We give a sketch of the argument.
The slow energy growth $N^{-3+}$ on the time interval $\lambda/N$ tells us that we can iterate the local well-posedness $N^{3-}$ times until the energy has grown significantly. This gives for the rescaled solution $u_\lambda$ on $\T_\lambda$ a time interval of existence of
\begin{equation*}
\frac{\lambda}{N} \cdot N^{3-} = \lambda N^{2-}.
\end{equation*}
Scaling back we obtain that the solution $u$ exists on a time scale
\begin{equation*}
T = N^{2-} \lambda^{-1} = N^{-\frac{1}{s}+3-}.
\end{equation*}
Consequently, for $s>\frac{1}{3}$, we obtain global well-posedness letting $N \to \infty$.

\bigskip

We turn to the details. To make the continuity argument transparent, we collect the previously established estimates and implicit constants.
Firstly, there is a universal constant $C_1 \geq 1$ such that
\begin{equation*}
\| I u_\lambda(t) \|_{H^1}^2 \leq C_1 (E_I^1(u_\lambda)(t) + \| u_\lambda(t) \|_{L^2}^2).
\end{equation*}
In the defocusing case this is clear, whereas in the focusing case this requires an application of the Gagliardo-Nirenberg inequality and the small mass assumption.
Secondly, there is a universal constant $C_2 \geq 1$ such that
\begin{equation*}
E_I^1(u_{0 \lambda}) \leq C_2 \| Iu_{0 \lambda} \|^2_{H^1}.
\end{equation*}
By Proposition \ref{prop:AuxiliaryWellposedness1d}, there is $\bar{C} > 0$ such that
\begin{equation*}
\| I u_\lambda \|_{Y^1_T} \leq \bar{C} 
\end{equation*}
for $T= \frac{\lambda}{N}$ provided that
\begin{equation*}
\| I u_{0 \lambda} \|^2_{H^1(\T_\lambda)} \leq c \ll 1.
\end{equation*}
Let $c_2 \leq c / (10 C_2 C_3)$. The key bounds to control the energy growth, which were proved in Propositions \ref{prop:EnergyGrowthResonant1d}, \ref{prop:DecayEstimateNonResonant1d}, and \ref{prop:BoundaryTerm1d}, are presently recalled:
\begin{equation}
\label{eq:EnergyGrowthEstimates}
\begin{split}
\big| \int_0^t \int  \overline{M}_6(k_1,\ldots,k_6) \hat{u}(s,k_1) \ldots \hat{\overline{u}}(s,k_6) d\Gamma_6 ds \big| &\lesssim N^{-3+} \| I u \|^6_{Y_{T}^1}, \\
\big| \int_0^t \int \overline{M}_{10}(k_1,\ldots,k_{10}) \hat{u}(s,k_1) \ldots \hat{\overline{u}}(s,k_{10}) d\Gamma_{10} ds \big| &\lesssim N^{-3+} \| I u \|^{10}_{Y_{T}^1}, \\
\big| \Lambda_6(\tilde{\sigma}_6)(t) \big| &\lesssim N^{-\delta} \| I u(t) \|^6_{H^1(\T_\lambda)}.
\end{split}
\end{equation}

We show the following:
\begin{proposition}
\label{prop:EnergyEstimate1d}
Suppose that
\begin{equation*}
\| I u_{0 \lambda} \|^2_{H^1(\T_\lambda)} \leq c_2.
\end{equation*}
Choose $N$ large enough such that
\begin{equation*}
N^{-\varepsilon} C_1 (\bar{C}^6 + \bar{C}^{10}) \leq C_1 C_2 c, \quad C_\delta N^{-\delta} (2(10 C_1 C_2 c)^6) \leq C_2 c.
\end{equation*}

Then, for $T = \lambda N^{2-2\varepsilon}$, the following estimate holds:
\begin{equation*}
\| I u_\lambda(t) \|_{H^1(\T_\lambda)}^2 \leq 10 C_2 C_3 c_2 \ll 1.
\end{equation*}
\end{proposition}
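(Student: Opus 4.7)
The plan is to run a standard bootstrap (continuity) argument on $t \mapsto \|Iu_\lambda(t)\|^2_{H^1(\T_\lambda)}$, with the iteration performed over $K := \lceil N^{2-2\varepsilon}\rceil$ consecutive subintervals $I_j = [(j-1)\lambda/N,\, j\lambda/N]$ partitioning $[0,T]$. Set
\[
B := \bigl\{\, t \in [0,T] : \|Iu_\lambda(s)\|^2_{H^1(\T_\lambda)} \leq 10 C_2 C_3 c_2 \text{ for all } s \in [0,t]\,\bigr\}.
\]
Then $B$ is closed and contains $0$ by the data assumption; by connectedness of $[0,T]$ it suffices to show that $B$ is also open, which I would do by strictly improving the bound on $B$ from $10 C_2 C_3 c_2$ to $5 C_2 C_3 c_2$.

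Fix $t \in B$ and let $j_0$ be the index with $t \in I_{j_0}$. On each $I_j$, $j \leq j_0$, the bootstrap bound together with $c_2 \leq c/(10 C_2 C_3)$ yields $\|Iu_\lambda((j-1)\lambda/N)\|^2_{H^1} \leq c$, so Proposition \ref{prop:AuxiliaryWellposedness1d} provides a solution on $I_j$ with $Y^1$-norm bounded by $\bar{C}$. Applying the identity \eqref{eq:EnergyIdentity} on $[0,t]$, the boundary term $\Lambda_6(\tilde\sigma_6)$ telescopes across the subinterval endpoints, so that
\[
E_I^1(u_\lambda)(t) = E_I^1(u_{0\lambda}) + \int_0^t \bigl[\Lambda_6(\overline{M}_6) + \Lambda_{10}(\overline{M}_{10})\bigr]\, ds - \Lambda_6(\tilde\sigma_6)(t) + \Lambda_6(\tilde\sigma_6)(0).
\]
Applying the first two estimates in \eqref{eq:EnergyGrowthEstimates} on each $I_j$ and summing,
\[
\Bigl|\int_0^t \bigl[\Lambda_6(\overline{M}_6) + \Lambda_{10}(\overline{M}_{10})\bigr]\, ds \Bigr| \lesssim K \cdot N^{-3+\varepsilon'}(\bar{C}^6 + \bar{C}^{10}) \lesssim N^{-1-2\varepsilon+\varepsilon'}(\bar{C}^6 + \bar{C}^{10}),
\]
which is bounded by $C_2 c$ for $N$ large by the first threshold condition. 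The third estimate of \eqref{eq:EnergyGrowthEstimates} applied to the two endpoint terms, combined with the bootstrap bound (so $\|Iu_\lambda(t)\|^6_{H^1} \leq (10 C_2 C_3 c_2)^3$) and the smallness of $\|Iu_{0\lambda}\|_{H^1}$, yields $|\Lambda_6(\tilde\sigma_6)(t)| + |\Lambda_6(\tilde\sigma_6)(0)| \leq C_2 c$ by the second threshold condition on $N$.

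Combining these bounds gives $E_I^1(u_\lambda)(t) \leq C_2 c_2 + 2 C_2 c$. Converting back to the $H^1$-norm via the coercivity inequality $\|Iu_\lambda(t)\|^2_{H^1} \leq C_1\bigl(E_I^1(u_\lambda)(t) + \|u_\lambda(t)\|^2_{L^2}\bigr)$, together with $L^2$-mass conservation $\|u_\lambda(t)\|_{L^2} = \|u_{0\lambda}\|_{L^2}$ and the small-mass hypothesis, one arrives at $\|Iu_\lambda(t)\|^2_{H^1} \leq 5 C_2 C_3 c_2$, a strict improvement of the bootstrap bound, closing the argument. The main obstacle is the boundary term $\Lambda_6(\tilde\sigma_6)$: it is not a time integral but a pointwise-in-time quantity bounded only by $N^{-\delta}$ times the sixth power of the current $H^1$-norm, and its telescoping structure across subintervals is essential, since a sum of $K \sim N^{2-2\varepsilon}$ interval-by-interval jumps would overwhelm the $N^{-\delta}$ gain. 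The two threshold conditions on $N$ in the statement precisely balance the accumulated $K$ interaction integrals of size $N^{-3+}$ against the single-shot endpoint correction of size $N^{-\delta}$.
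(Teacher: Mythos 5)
Your proposal is correct and follows essentially the same route as the paper: a continuity/bootstrap argument over $\sim N^{2-2\varepsilon}$ subintervals of length $\lambda/N$, invoking the auxiliary local well-posedness on each to bound the $Y^1$-norm by $\bar{C}$, summing the $N^{-3+}$ interaction integrals, treating the boundary term $\Lambda_6(\tilde\sigma_6)$ pointwise at the two endpoints only (your explicit remark that it must not be accumulated interval-by-interval is exactly the point implicit in the paper's use of the identity \eqref{eq:EnergyIdentity} on all of $[0,t]$), and closing via the coercivity inequality and mass conservation.
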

\begin{proof}
The proof is carried out via a continuity argument.
For times $0 \leq t \leq \frac{\lambda}{N}$ this follows from the auxiliary local well-posedness result. We turn to larger times: Write
\begin{equation*}
\begin{split}
E_I^1(u_\lambda(t)) &= E_I^1(u_{0 \lambda}) - \big( \Lambda_6(\tilde{\sigma}_6(t)) - \Lambda_6(\tilde{\sigma}_6)(0)) \\
&\quad + \int_0^t \Lambda_6(\overline{M}_6) ds + \int_0^t \Lambda_{10}(\overline{M}_{10}) ds.
\end{split}
\end{equation*}
Suppose that $\| Iu_\lambda(t) \|_{H^1(\T_\lambda)}^2 \leq 10 C_1 C_2 c$ and $0 \leq t \leq N^{2-} \lambda$,  $M \in \N$ such that $T \in [\frac{(M-1) \lambda}{N}, \frac{M \lambda}{N}]$. By the above display and the bounds \eqref{eq:EnergyGrowthEstimates}, we obtain:
\begin{equation*}
\begin{split}
\| I u_\lambda(t) \|_{H^1}^2 &\leq C_1 ( E_I^1 u_\lambda(t) + \| u_{0 \lambda} \|_{L^2}^2 ) \\
&\leq C_1 (E_I^1(u_{0 \lambda}) - \big( \Lambda_6(\tilde{\sigma}_6)(t) - \Lambda_6(\tilde{\sigma})(0) \big) \\
&\quad + \sum_{k=0}^{M-2} \int_{[k \lambda/N, (k+1) \lambda/N]} \big( \Lambda_6(\overline{M}_6) + \Lambda_{10}(\overline{M}_{10}) \big) ds \\
&\quad + \int_{[\frac{(M-1)\lambda}{N},T]} \Lambda_6(\overline{M}_6) + \Lambda_{10}(\overline{M}_{10}) ds \\
&\leq C_1 (E_I^1(u_{0 \lambda}) + C_\delta N^{-\delta} \big( \| I u_\lambda(t) \|_{H^1}^6 + \| I u_\lambda(0) \|_{H^1}^6 \big) + M N^{-3+} ( \bar{C}^6 + \bar{C}^{10}) \big) \\
&\leq C_1 ( C_2 \| I u_{0 \lambda} \|_{H^1}^2 + M N^{-3+} (\bar{C}^6 + \bar{C}^{10}) + C_\delta N^{-\delta} (2 (10 C_1 C_2 c)^6 ) \\
&\leq 5 C_1 C_2 c.
\end{split}
\end{equation*}
Hence, we have small $H^1$-norms on a timescale $N^{2-} \lambda$.
\end{proof}

\begin{proof}[Proof~of~Theorem~\ref{thm:ImprovedGWPQuinticNLS}]

Suppose that $u_0 \in H^s(\T)$ for $s>\frac{1}{3}$ with $\| u_0 \|_{L^2(\T)} \leq c \ll 1$. We rescale $u_{0\lambda}$ to the large torus $\T_{\lambda}$ and obtain with the choice $\lambda = N^{\frac{1-s}{s}+\varepsilon}$:
\begin{equation*}
\begin{split}
\| I u_{0 \lambda} \|^2_{H^1(\T_\lambda)} &\leq 2( \| u_{0 \lambda} \|^2_{L^2(\T_\lambda)} + N^{2(1-s)} \| u_{0 \lambda} \|^2_{\dot{H}^s(\T_\lambda)} ) \\
&\leq 2 ( \| u_0 \|^2_{L^2(\T)} + N^{2(1-s)} \lambda^{-2s} \| u_0 \|^2_{\dot{H}^s(\T)} ) \\
&\leq 2 ( \| u_0 \|^2_{L^2(\T)} + N^{-2s \varepsilon} \| u_0 \|^2_{\dot{H}^s(\T)} ).
\end{split}
\end{equation*}
Now we can choose $N=N( \| u_0 \|_{\dot{H}^s}, \| u_0 \|_{L^2(\T)})$ large enough such that
\begin{equation*}
\| I u_{0 \lambda} \|^2_{H^1(\T_\lambda)} \leq c_2 \ll 1
\end{equation*}
with $c_2$ like in Proposition \ref{prop:EnergyEstimate1d}.
By Proposition \ref{prop:EnergyEstimate1d} the $I$-Cauchy problem \eqref{eq:ICauchyProblem} can be solved for times $T \sim \lambda N^{2-}$.

Scaling back, we find that the solution on $\T$ exists for times
\begin{equation*}
T^* \sim \lambda^{-1} N^{2-} = N^{3-\frac{1}{s}-}.
\end{equation*}
Hence, for $s>\frac{1}{3}$ and letting $N \to \infty$, we obtain global well-posedness.

\end{proof} 

\subsection{Proof of Theorem \ref{thm:GWPNLS2d}}

In this section we finish the proof of Theorem \ref{thm:GWPNLS2d}. We collect the proved estimates and formulae for convenience.

\medskip

Starting with local solutions to
\begin{equation}
\label{eq:SolutionsT2}
\left\{ \begin{array}{cl}
i \partial_t u + \Delta u &= \pm |u|^2 u, \quad (t,x) \in \R \times \T^2_\gamma, \\
u(0) &= u_0 \in H^s(\T_{\underline{\gamma}}^2)
\end{array} \right.
\end{equation}
for $\frac{1}{2} < s \leq \frac{2}{3}$, we rescale to tori $\T^2_\lambda$ with large period and damp the high frequencies to consider the $I$-system
\begin{equation}
\label{eq:ISystemConclusion}
\left\{ \begin{array}{cl}
i \partial_t I u_\lambda + \Delta I u_\lambda &= \pm I(|u_\lambda|^2 u_\lambda), \quad (t,x) \in \R \times \T^2_\lambda, \\
Iu_\lambda(0) &= I u_{\lambda 0} \in H^1(\T^2_\lambda). 
\end{array} \right.
\end{equation}
With our choice of $\lambda = N^{\frac{1-s}{s}}$ we accomplish
\begin{equation*}
\| I u_\lambda(0) \|_{H^1(\T^2_\lambda)} \lesssim \| u_{\lambda 0 } \|_{H^s(\T^2_\lambda)},
\end{equation*}
and we have local well-posedness of \eqref{eq:ISystemConclusion} on times $T = \lambda^{-\delta}$.

We use the $I$-energy as an almost conserved quantity:
\begin{equation*}
E_I^1(u_\lambda)(t) = E_I^1(u_{\lambda 0}) - \big[ \Lambda_4(\tilde{\sigma}_4)(t) - \Lambda_4(\tilde{\sigma}_4)(0) \big] + \int_0^t \big( \Lambda_4(\overline{M}_4) + \Lambda_6(\overline{M}_6) \big) ds.
\end{equation*}
The relevant growth bounds for $\frac{1}{2} < s \leq \frac{2}{3}$ and $0 \leq T \leq \lambda^{-\delta}$ are given by Proposition \ref{prop:EnergyGrowthResonant2d} and \ref{prop:NonResonantEstimate2d}:
\begin{equation*}
\big| \int_0^T \Lambda_4(\overline{M}_4) ds \big| \lesssim N^{-1+} \lambda^{-\frac{1}{2}} \| I u_{\lambda} \|^4_{Y^1_T}, \quad \big| \int_0^T \Lambda_6(\overline{M}_6) ds \big| \lesssim N^{-2+} \| I u_{\lambda} \|^6_{Y^1_T}.
\end{equation*}
Note that the estimate for the resonant part is clearly inferior to the estimate for the non-resonant part.
The estimate for the boundary term for $\frac{1}{2} < s \leq \frac{2}{3}$ is given by some $C_\delta$ and $N^{-\delta}$
\begin{equation*}
\big| \Lambda_4(\tilde{\sigma}_4)(t) \big| \leq C_\delta N^{-\delta} \| I u_{\lambda}(t) \|^4_{H^1(\T^2_\lambda)}.
\end{equation*}

\medskip

In the defocusing case we have
\begin{equation}
\label{eq:EnergyCoercivity}
\| I u_{\lambda} \|^2_{\dot{H}^1(\T^2_\lambda)} \leq 2 E_I^1(u(t)), \quad \| Iu_{\lambda} (t) \|^2_{L^2(\T^2_\lambda)} \leq \| u_{0 \lambda} \|^2_{L^2(\T^2_\lambda)}.
\end{equation}
For the first estimate to hold in the focusing case, we require a smallness condition on $\| u_{0 \lambda} \|_{L^2(\T_\lambda)} = \| u_{0 \lambda} \|_{L^2(\T^2_\gamma)} \ll 1$. Then follows from the Gagliardo--Nirenberg--Ladyzhenskaya inequality
\begin{equation*}
\| I u_{\lambda}(t) \|_{L^4}^4 \ll \| I u_{\lambda}(t) \|_{\dot{H}^1}^2.
\end{equation*}
Consequently, we obtain as well in the focusing case:
\begin{equation*}
\| I u_{\lambda}(t) \|_{\dot{H}^1}^2 \leq 2 \big( \| I u_{\lambda} (t) \|_{\dot{H}^1}^2 - \int_{\T^2_\gamma} \frac{|I u_{\lambda} |^4}{4} dx \big).
\end{equation*}

We choose $N$ large enough depending on the following.
Firstly, note that there is a universal constant $C_2$ such that
\begin{equation*}
E_I^1(u_{0 \lambda}) \leq C_2 \| I u_{0 \lambda} \|_{H^1}^2.
\end{equation*}
Given $u_0$, by the auxiliary local well-posedness we find $\bar{C}$ such that
\begin{equation*}
\| I v_{\lambda} \|_{Y^1_T} \leq \bar{C} \| I v_{0 \lambda} \|_{H^1}
\end{equation*}
for $T \leq \lambda^{-\delta}$ as long as
\begin{equation*}
\| I v_{0 \lambda} \|^2_{H^1} \leq 10 C_2 \| I u_{0 \lambda} \|^2_{H^1}.
\end{equation*}
Now we choose $N \gg 1$ such that
\begin{equation*}
N^{-\varepsilon} \big( \bar{C}^4 (5 C_2)^4 \| I u_{0 \lambda} \|_{H_\lambda^1}^4 + \bar{C}^6 (5C_2)^6 \| I u_{0 \lambda} \|_{H_\lambda^1}^6) \leq \| I u_0 \|_{H_{\lambda}^1}^2.
\end{equation*}
Secondly, we choose $N \gg 1$ possibly larger such that
\begin{equation*}
26 C_2 C_\delta N^{-\delta} \| I u_{0 \lambda} \|_{H_\lambda^1}^4 \leq \| I u_{0 \lambda} \|_{H_\lambda^1}^2.
\end{equation*}

With this choice of $N$, we show the following proposition:
\begin{proposition}
\label{prop:LongTimeExistence}
Suppose that $\| u_0 \|_{L^2(\T_\gamma)} \ll 1$ in the focusing case such that \eqref{eq:EnergyCoercivity} holds. There is $N=N(\|u_0 \|_{H^s})$ such that for $0 \leq t \leq T \lesssim N^{1-} \lambda^{\frac{1}{2}}$ the following estimate holds for the solution to \eqref{eq:ISystemConclusion}:
\begin{equation}
\label{eq:IBound}
\| I u_\lambda(t) \|^2_{H^1(\T^2_\lambda)} \leq 5 C_2 \| I u_{0 \lambda} \|_{H^1}^2.
\end{equation}
\end{proposition}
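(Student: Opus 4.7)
The plan is to run a bootstrap argument parallel in structure to Proposition \ref{prop:EnergyEstimate1d}, now with local well-posedness time $\lambda^{-\delta}$ in place of $\lambda/N$ and with the two-dimensional almost-conservation bounds collected just before the proposition. Assume the bootstrap hypothesis
\[
\|Iu_\lambda(t)\|_{H^1(\T^2_\lambda)}^2 \leq 10 C_2 \|Iu_{0\lambda}\|_{H^1(\T^2_\lambda)}^2
\]
holds on some maximal interval $[0,T^*]$; I shall show that it self-improves to \eqref{eq:IBound} so long as $T^* \lesssim N^{1-}\lambda^{1/2}$, which extends the interval via a standard continuity argument.

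Partition $[0,T^*]$ into $M$ subintervals of length $\lambda^{-\delta}$. Under the bootstrap hypothesis, Proposition \ref{prop:AuxiliaryLWP2d} supplies a uniform bound $\|Iu_\lambda\|_{Y^1_{\lambda^{-\delta}}} \leq \bar{C}$ on each subinterval, with $\bar{C}$ depending only on $(10 C_2)^{1/2}\|Iu_{0\lambda}\|_{H^1}$. Using the energy identity
\[
E_I^1(u_\lambda)(t) = E_I^1(u_{0\lambda}) - [\Lambda_4(\tilde{\sigma}_4)(t) - \Lambda_4(\tilde{\sigma}_4)(0)] + \int_0^t \bigl(\Lambda_4(\overline{M}_4) + \Lambda_6(\overline{M}_6)\bigr)\,ds
\]
together with Propositions \ref{prop:EnergyGrowthResonant2d} and \ref{prop:NonResonantEstimate2d} applied on each subinterval and summed, one obtains
\[
\Big|\int_0^{T^*}\bigl(\Lambda_4(\overline{M}_4)+\Lambda_6(\overline{M}_6)\bigr)\,ds\Big| \lesssim M\, N^{-1+\varepsilon}\lambda^{-1/2}\bar{C}^4 + M\, N^{-2+\varepsilon}\bar{C}^6.
\]
The boundary correction $\Lambda_4(\tilde{\sigma}_4)$ is controlled by Lemma \ref{lem:BoundaryTerm2d}, and the coercivity estimate \eqref{eq:EnergyCoercivity}, which in the focusing case rests on the small-mass hypothesis and Gagliardo--Nirenberg, converts the resulting $E_I^1$-estimate into the desired $H^1$-estimate.

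By the prescribed choice of $N$, the boundary term and the $M\cdot N^{-2+\varepsilon}\bar{C}^6$ contribution are each absorbed into $\|Iu_{0\lambda}\|_{H^1}^2$. The bottleneck is the $M\cdot N^{-1+\varepsilon}\lambda^{-1/2}\bar{C}^4$ contribution, which remains smaller than $\|Iu_{0\lambda}\|_{H^1}^2$ precisely when $M \lesssim N^{1-\varepsilon}\lambda^{1/2}$. Under this constraint the chain of inequalities gives $\|Iu_\lambda(T^*)\|_{H^1}^2 \leq 5 C_2 \|Iu_{0\lambda}\|_{H^1}^2$, strictly improving the bootstrap hypothesis. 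Since each subinterval has length $\lambda^{-\delta}$, the total time is $T^* = M\lambda^{-\delta} \sim N^{1-\varepsilon}\lambda^{1/2-\delta}$; taking $\delta$ small relative to $\varepsilon$, this yields the claimed lifespan $T \lesssim N^{1-}\lambda^{1/2}$.

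The main obstacle is the comparatively weak decay $N^{-1+\varepsilon}\lambda^{-1/2}$ of the resonant $\Lambda_4$-contribution, versus the $N^{-2+}$-decay available for $\Lambda_6$ and for the boundary term; it is this first term that dictates the admissible number of iterations and thus the exponent $\frac{3}{5}$ appearing in Theorem \ref{thm:GWPNLS2d}. Pushing the regularity threshold below $\frac{3}{5}$ would require a sharper resonant decomposition of $M_4$ than the crude one used in Section \ref{section:EnergyGrowthBounds2d}.
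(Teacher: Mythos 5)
Your proposal is correct and follows essentially the same route as the paper: a bootstrap/continuity argument iterating the local theory on subintervals of length $\lambda^{-\delta}$, summing the almost-conservation increments from Propositions \ref{prop:EnergyGrowthResonant2d} and \ref{prop:NonResonantEstimate2d}, absorbing the boundary term and the $\Lambda_6$-contribution via the prescribed choice of $N$, and letting the resonant $\Lambda_4$-term with its $N^{-1+}\lambda^{-1/2}$ decay dictate $M \lesssim N^{1-}\lambda^{1/2}$. Your identification of the bottleneck and the absorption of the $\lambda^{-\delta}$ loss into $N^{1-}$ match the paper's reasoning.
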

\begin{proof}
We use a continuity argument. The claim holds for $|t| \leq \lambda^{-\delta}$ by the auxiliary local well-posedness. Next suppose \eqref{eq:IBound} holds up to $T \ll N^{1-} \lambda^{\frac{1}{2}}$. Write $T= M \lambda^{-\delta}$ for $M \in \N$. Then we estimate
\begin{equation*}
\begin{split}
E_I^1(u_{\lambda}(T)) &= E_I^1(u_{0 \lambda}) - \big[ \Lambda_4(\tilde{\sigma}_4)(t) - \Lambda_4(\tilde{\sigma}_4)(0) \big] + \int_0^{\lambda^{-\delta}} \big( \Lambda_4(\overline{M}_4) + \Lambda_6(\overline{M}_6) \big)   \\
&\quad + \int_{\lambda^{-\delta}}^{2 \lambda^{-\delta}} \big( \Lambda_4(\overline{M}_4) + \Lambda_6(\overline{M}_6) \big) + \ldots + \int_{(M-1) \lambda^{-\delta}}^{M \lambda^{-\delta}} \big( \Lambda_4(\overline{M}_4) + \Lambda_6(\overline{M}_6) \big).
\end{split}
\end{equation*}
By the bootstrap assumption we have the following estimate for the boundary term:
\begin{equation*}
C_\delta N^{-\delta} (\| I u_\lambda(t) \|^4_{H^1} +\| I u_{0 \lambda} \|_{H^1}^4 ) \leq C_\delta N^{-\delta} 26 C_2 \| I u_{0 \lambda} \|^4_{H^1} \leq \| I u_{0 \lambda} \|_{H^1}^2.
\end{equation*}
Consequently,
\begin{equation*}
\begin{split}
E_I^1(u_\lambda(T)) &\leq E_I^1(u_{0 \lambda}) + \| I u_{0 \lambda} \|_{H^1}^2 \\
&\quad + M \lambda^{-\delta} N^{-1+\varepsilon} \lambda^{-\frac{1}{2}} N^{-\varepsilon} \tilde{C} ( \bar{C}^4 (5 C_2)^4 \| I u_{0 \lambda} \|_{H^1}^4 + \bar{C}^6 (5 C_2)^6 \| I u_{0 \lambda} \|^6_{H^1} ).
\end{split}
\end{equation*}
The final term can be bounded by $\| I u_{0 \lambda} \|_{H^1}^2$ and thus,
\begin{equation*}
\| I u_\lambda(T) \|_{H^1}^2 \leq E_I^1(u_\lambda(T)) + \| I u_\lambda(T) \|_{L^2}^2 \leq 4 \| I u_{0 \lambda} \|_{H^1}^2.
\end{equation*}
Hence, we can continue the bootstrap assumption until $T \sim N^{1-} \lambda^{\frac{1}{2}}$.
\end{proof}

With the above proposition at hand, the conclusion is now a matter of rescaling:
\begin{proof}[Proof~of~Theorem~\ref{thm:GWPNLS2d}]
We choose $N$ large enough to have \eqref{eq:IBound} from Proposition \ref{prop:LongTimeExistence}. Reversing the scaling we find that the solution to \eqref{eq:SolutionsT2} exists on a time
\begin{equation*}
T \sim \lambda^{-\frac{3}{2}} N^{1-} \sim N^{-\frac{3}{2s}+\frac{5}{2}-}.
\end{equation*}
The exponent is positive provided that $s > \frac{3}{5}$. Hence, taking $N$ to infinity for $s>\frac{3}{5}$ establishes global well-posedness.
\end{proof}

\section*{Acknowledgement}

Financial support by Korea Institute for Advanced Study, grant No.
MG093901 is gratefully acknowledged.

\end{document}